\theoremstyle{plain}
\newtheorem{theorem}{Theorem}[section]
\newtheorem{cor}[theorem]{Corollary}
\newtheorem{prop}[theorem]{Proposition}
\newtheorem{lemma}[theorem]{Lemma}
\newtheorem*{teoA}{Theorem A}
\newtheorem*{teoB}{Theorem B}
\newtheorem*{claim}{Claim}
\newenvironment{claimproof}{\noindent\textit{Proof of
		Claim.}}{\hfill\qedsymbol \tiny{ Claim}
	\medskip}
\theoremstyle{definition}
\newtheorem{remark}[theorem]{Remark}
\newtheorem{fact}[theorem]{Fact}
\newtheorem{definition}[theorem]{Definition}
\newtheorem*{example}{Example}
\newtheorem*{question}{Question}
\newtheorem*{notation}{Notation}
\newcommand{\nc}{\newcommand}
\nc{\Z}{\mathbb{Z}}
\nc{\Q}{\mathbb{Q}}
\nc{\N}{\mathbb{N}}
\nc{\C}{\mathbb{C}}
\nc{\R}{\mathbb{R}}
\nc{\Ra}{\mathbb{R}_\textrm{alg}}
\nc{\Ima}{\mathrm{Im}}
\nc{\restr}[1]{\!\!\upharpoonright_{#1}}
\nc\LL{\mathcal L}
\nc{\inte}{\operatorname{int}}
\nc\scl{\operatorname{scl}}
\nc\cl{\operatorname{cl}}
\nc{\dcl}{\operatorname{dcl}}
\nc{\dclq}{\operatorname{acl^\text{eq}}}
\nc{\acl}{\operatorname{acl}}
\nc{\tr}{\operatorname{tr.deg}}
\nc{\ldim}{\operatorname{ldim}}
\nc\inv{ ^{-1}}
\nc{\tp}{\operatorname{tp}}
\nc{\cf}{\text{cf. }}
\nc{\eg}{\text{e.g. }}
\def\Ind#1#2{#1\setbox0=\hbox{$#1x$}\kern\wd0\hbox to
  0pt{\hss$#1\mid$\hss} \lower.9\ht0\hbox to
  0pt{\hss$#1\smile$\hss}\kern\wd0}
\def\Notind#1#2{#1\setbox0=\hbox{$#1x$}\kern\wd0\hbox to
  0pt{\mathchardef\nn="0236\hss$#1\nn$\kern1.4\wd0\hss}\hbox to
  0pt{\hss$#1\mid$\hss}\lower.9\ht0 \hbox to
  0pt{\hss$#1\smile$\hss}\kern\wd0}
\def\ind{\mathop{\mathpalette\Ind{}}}
\def\indip{\mathop{\ \ \hbox to 0pt{\hss$\mid^{\hbox to
0pt{$\scriptstyle P$\hss}}$\hss}
\lower4pt\hbox to 0pt{\hss$\smile$\hss}\ \ }}
\def\nindip{\mathop{\ \ \hbox to 0pt{\hss$\!\not{\mid}^{\hbox to
0pt{$\scriptstyle\, P$\hss}}$\hss}
\lower4pt\hbox to 0pt{\hss$\smile$\hss}\ \ }}
\begin{document}

\title[Dense pairs of topological structures]{Open core and small groups in 
dense pairs of topological structures}
\date{\today}

\author{Elias Baro and Amador Martin-Pizarro}

 \address{Departamento de
  \'Algebra, Geometr\'ia y Topolog\'ia; Facultad de Matem\'aticas; Universidad Complutense de
  Madrid; Plaza de Ciencias, 3; Ciudad Universitaria; E-28040 Madrid;
  Spain}

\address{Abteilung f\"ur Mathematische Logik; Mathematisches
  Institut; Albert-Ludwig-Universit\"at Freiburg; Eckerstra\ss e 1;
  D-79104 Freiburg; Germany}

\email{eliasbaro@pdi.ucm.es}
\email{pizarro@math.uni-freiburg.de}

\thanks{Research partially supported by the program MTM2014-59178-P
  and MTM2017-82105-P} \keywords{Model Theory, Topological Structures,
  Real Closed Fields, Dense Pairs} \subjclass{03C64, 03C45}

\begin{abstract}

  Dense pairs of geometric topological fields have \emph{tame open core}, that is,
  every definable open subset in the pair is already definable in the 
  reduct. We fix a minor gap in the published 
  version of van den Dries's  seminal work on dense pairs of o-minimal groups, 
  and show 
  that every definable unary function in a dense pair of geometric 
  topological fields agrees with a definable function in the reduct, 
  off a small definable subset, that is, a definable set internal to 
  the predicate. 

  For certain dense pairs of geometric topological fields without the 
  independence property, whenever the underlying set of a definable group  is 
  contained in the dense-codense predicate, the group law is locally   
  definable in the reduct as a geometric topological field. If the reduct has 
  elimination of imagi\-na\-ries, we extend this result, up to  
  interdefinability, 
  to all  groups internal to the predicate. 
\end{abstract}

\maketitle


\section*{Introduction}

Tame topological structures and expansions by a predicate have
often been considered from a model-theoretical point of view. Both
$p$-adically and real closed fields are naturally endowed with a
definable topology such that the field operations are continuous. The
close interaction between the topological and algebraic pro\-perties
of such structures is crucial to determine their model-theoretic
behaviour. Se\-ve\-ral frameworks have been suggested to treat
simultaneosly archimedean and non-archimedean normed fields: in this
paper, we will follow the topological approach proposed by Mathews
\cite{lM95}, which was later on adopted by Berenstein, Dolich and
Onshuus \cite{BDO11} to study the theory of dense pairs.

Robinson \cite{aR59} showed that the theory of a real closed field $M$
equipped with a dense proper real closed subfield $P(M)$ is
complete. Subsequently Macintyre \cite{aM75} proved the same result
for dense pairs of $p$-adically closed fields. The 
model-theoretical properties of dense pairs of o-minimal 
expansions of ordered abelian groups were thoroughly studied
by  van den Dries \cite{vD98}, who gave an explicit description of
definable unary sets and functions, up to \emph{small sets}. A set is small in 
a pair $(M,P(M))$ if it is contained in the
image of the $P(M)$-points by a semialgebraic function. Though the
theory of the pair is no longer o-minimal, every definable open set in
the pair is actually definable in the reduct of $M$ as an ordered
field, so the theory of the pair has o-minimal open core \cite{vD98, MS99, 
DMS10,
  BH12}. A similar result on the open core of pairs of $p$-adically
and real closed fields has been recently obtained by Point \cite{fP11}
(see as well as work of Tressl \cite{mT18}) as a by-product of the
study of the theory of differentially closed topological fields. In
Section \ref*{S:dense}, we will use a criterium (cf.  Proposition
\ref{P:curvfunc}) of geometric nature in order to provide a new 
proof of the following result:

\begin{teoA}
	Let $(M,E)$ be a dense pair of models of a geometric theory $T$ 
	of topological 
	rings. Assume that every 
	$\LL^P$-definably closed set $A$ is 
	special, that is, \[ \dim(a_1,\ldots, a_n/E)=\dim(a_1,\ldots, 
	a_n/A\cap E) \text{ for all } a_1,\ldots, a_n \text{ in } A,\]  
	where $\dim$ is the dimension as a geometric structure.  
	
	Every  open	$\LL^P$-definable subset of $M^n$ over a special 
	set $A$	is $\LL_A$-definable. Hence, the open core of the 
	theory $T^P$ is tame, for it is definable in the reduct $T$. 
\end{teoA}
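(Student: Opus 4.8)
The plan is to deduce $\LL_A$-definability of an open $\LL^P$-definable set $U\subseteq M^n$ from the geometric criterion of Proposition~\ref{P:curvfunc}; once that is in place, the ``hence'' is immediate, since every $\LL^P$-definable set is $\LL^P$-definable over $\dcl^{\LL^P}$ of a finite tuple, which is $\LL^P$-definably closed and hence, by assumption, special. Working inside a sufficiently saturated model $\mathfrak M$ of $T$ together with its dense substructure $E$, the task reduces to showing that membership of a tuple $\bar b$ in $U$ is controlled by its $\LL$-type over $A$: concretely, that whenever $\tp_{\LL}(\bar b/A)=\tp_{\LL}(\bar b'/A)$ one has $\bar b\in U\iff\bar b'\in U$. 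Indeed, in the Stone space of $\LL$-types over $A$ the set of types realised inside $U$ and the set of types realised outside $U$ are then complementary and, by a standard compactness argument using that $U$ is $\LL^P$-definable, both open; a single $\LL_A$-formula isolates the first one and defines $U$.

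To establish this type-invariance I would first use openness of $U$ together with the fact that in a geometric theory of topological rings a lower-dimensional $\LL$-definable set has empty interior: this guarantees, via saturation, that inside any box one can realise a tuple of full $\LL$-dimension $n$ that is moreover $\LL$-independent from $E$ over the relevant parameters, so Proposition~\ref{P:curvfunc} allows one to reduce the problem to comparing tuples $\bar c,\bar c'$ that are in $\LL$-general position, that is, $\bar c\ind_A E$ and $\bar c'\ind_A E$ with the same $\LL$-type over $A$. For such tuples the special hypothesis on $A$ ensures that $\dcl^{\LL^P}(A\bar c)$ and $\dcl^{\LL^P}(A\bar c')$ are again special and remain $\LL$-independent from $E$ over $A\cap E$; hence the $\LL_A$-isomorphism sending $\bar c$ to $\bar c'$ extends first to a partial $\LL^P$-isomorphism between these $\LL^P$-definably closed special sets and then, by the back-and-forth characterising elementary equivalence of dense pairs of models of $T$ --- which matches up the dense (and codense) predicates using density --- to an automorphism $\tau$ of the pair $(\mathfrak M,E)$ fixing $A$ with $\tau(\bar c)=\bar c'$. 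Since $\tau$ preserves the $\LL^P$-definable set $U$, we get $\bar c\in U\iff\bar c'\in U$, and Proposition~\ref{P:curvfunc} carries this back to $\bar b,\bar b'$.

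The main obstacle is exactly the step where one wants to pass to tuples in general position. In the o-minimal reduct one would simply discard a lower-dimensional ``small'' set and work with generic points, but in the pair a lower-dimensional $\LL^P$-definable set need not be nowhere dense --- the predicate $E$ itself is dense and codense --- so this move is unavailable in raw form, and it is precisely the specialness of $A$ and the geometric content of Proposition~\ref{P:curvfunc} that make the reduction legitimate. The second delicate point, which I expect to be reducible to earlier results on the dimension calculus of the pair, is checking that specialness and $E$-general position are inherited along $\dcl^{\LL^P}(A\bar c)$ and $\dcl^{\LL^P}(A\bar c')$ and preserved throughout the back-and-forth, so that the dense substructure can indeed be matched compatibly; granting this, the remaining steps --- the Stone-space compactness argument, the box argument, and the extension of a finite partial $\LL^P$-isomorphism to an automorphism of a saturated pair --- are routine.
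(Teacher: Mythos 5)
Your overall architecture is right in one respect: Corollary \ref{C:charfct} (the consequence of Proposition \ref{P:curvfunc} for characteristic functions) is indeed the engine, and your automorphism argument for tuples $\bar c,\bar c'$ with $\bar c\ind_A E$ and $\bar c'\ind_A E$ is correct --- specialness of $A$ passes to $\dcl(A\bar c)$, the predicate is matched trivially since $\dcl(A\bar c)\cap E=\dcl(A)\cap E$, and Fact \ref{F:Dries}(2) makes the partial map $\LL^P$-elementary. (Minor slip: the two sets of types are both \emph{closed}, hence clopen, not ``open''.) The genuine gap is the step you yourself flag as the main obstacle: the reduction of the type-invariance claim to tuples in general position. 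Proposition \ref{P:curvfunc} does not perform that reduction. What it does is reduce the $n$-variable problem to a one-variable problem: you must show that $\alpha^{-1}(U)$ is $\LL_D$-definable for \emph{every} $\LL_D$-definable curve $\alpha$, including curves whose points land in or near $E$; it gives you no licence to discard the tuples that are algebraic over $AE$. But those tuples --- above all the tuples in $E^n$ itself, and mixed tuples with some coordinates in $\acl(AE)$ --- are precisely where an $\LL^P$-definable set can fail to be $\LL$-definable, and openness of $U$ places no constraint on $M^n\setminus U$ near such points. Knowing that the generic points of a box lie in $U$ tells you nothing about whether the non-generic points do, so the invariance statement for arbitrary $\bar b,\bar b'$ does not follow from the generic case, and your proof does not close.

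What is missing is the one ingredient that actually cashes in openness, namely the one-dimensional open case, Fact \ref{F:Dries}(\ref{F:open_1dim}): for a unary $\LL^P$-definable set the dense-codense piece $W$ must be empty when the set is open, so an open $\LL^P$-definable subset of $M$ over a special set is already $\LL$-definable. The paper's proof is then two lines: by Corollary \ref{C:charfct} it suffices to check each curve $\alpha$ over a special $D\supseteq A$; taking $\alpha$ continuous (by cell decomposition), $\alpha^{-1}(U)$ is an \emph{open} $\LL^P_D$-definable subset of $M$, hence $\LL_D$-definable by the one-dimensional case. If you want to keep your type-space framing, you would still have to prove invariance for the non-generic tuples, and any such argument will in effect re-prove Fact \ref{F:Dries}(\ref{F:open_1dim}) and its higher-dimensional analogues rather than bypass them.
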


 After private communication with van den Dries, we provide in 
 the Appendix 
 a fix for a minor gap in the published version of \cite[Corollary 
3.4]{vD98}, and show, in the broader 
context of geometric topological structures, that every definable unary 
function in the pair agrees off a small subset with a function definable in the 
predicate. Note that the same gap also affects \cite[Theorem 4.9]{BDO11}.

Understanding the nature of groups definable in a specific structure
is a recurrent topic in model theory. In \cite{HP94}, it was shown
that a group definable in a $p$-adically or real closed field $M$ is
locally isomorphic to the connected component of the $M$-points of an
algebraic group defined over $M$. In particular, the group law is
locally given by an algebraic map. Furthermore, if the group is Nash
affine in a real closed field, this local isomorphism extends to a
global isomorphism \cite{HP11}, since the Nash topology on Nash affine
groups is noetherian. A crucial aspect to define the local isomorphism
and the corresponding algebraic group through a group configuration
diagram \cite{eH90} is the strong interaction between the
semialgebraic dimension in $M$ and the transcendence degree,
computed in the field algebraic closure of $M$. This interaction will
be captured in Definition \ref{D:stab_controlled} in an abstract set-up.

A first description of groups definable in dense pairs $(M, P(M))$ of
real closed fields appears in \cite{EGH15}: the group law is
\emph{locally} semialgebraic (off a wider class of certain sets, encompassing 
small sets). However, if the group is small, particularly if
the underlying set is a subset of some cartesian product of $P(M)$,
the above description does not provide any relevant information.  In
this note, we tackle the remaining case and consider groups whose
underlying set is a subset of some cartesian product of $P(M)$,
following the proof of Hrushovski and Pillay \cite{HP94}.  A recent
result of Eleftheriou \cite{pE17} on elimination of imaginaries on
$P(M)$ with the induced structure from the pair (see Section \ref*{S:EI}) 
allows 
us to extend
our description to all small groups, whenever the topological geometric
field has elimination of imaginaries.

The first obstacle we encountered is the lack of a sensible
notion of dimension in the pairs. In an arbitrary pair $(M, P(M))$ of
topological structures, there is a rudimentary notion of dimension,
given by the \emph{small closure}: the union of all small
definable sets. Since we are only interested in small sets, the small
closure is not useful for our purposes. In Section \ref{S:honesty}, we
attach a dimension to definable subsets of $P(M)^k$ in terms of their
honest definition, as introduced in \cite{CS13}, when the topological
geometric structure does not have the independence property.  Using
this dimension, we produce in Section \ref{S:gps} a suitable group
configuration diagram and show the following:

\begin{teoB} Consider a dense pair $(M,E)$ 
	of models of a geometric NIP theory $T$ of 
	topological rings, with $E=P(M)$, in
	the language $\LL^P= \LL\cup \{P\}$.  Assume that $T$ is 
	stably controlled with respect to a 
	strongly 
	minimal $\LL^0$-theory $T^0$ (see Definition 
	\ref{D:stab_controlled}).   If $(G, \ast)$ is
	an $\LL^P$-definable group over a special set $D\subseteq M$ 
	such that
	$G\subseteq E^k$ for some $k$, then the group law is locally 
	$\LL^0$-definable. 
	
	Furthermore, if $T$ has elimination of imaginaries, then the 
	same holds for all small groups $(G, \ast)$. 
\end{teoB}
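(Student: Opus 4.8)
The plan is to run the Hrushovski--Pillay strategy from \cite{HP94} for $(G,\ast)$, but with the semialgebraic dimension replaced by a dimension on the $E$-internal definable sets and with transcendence degree computed in the strongly minimal companion $T^0$. First I would fix a well-behaved dimension $\dim^E$ on $\LL^P$-definable subsets of $E^n$. Since $(M,E)$ is a dense pair, $E$ is, qua $\LL$-structure, an elementary substructure of $M$, hence itself a model of the geometric NIP theory $T$; by the honest-definition machinery of Chernikov--Simon \cite{CS13}, adapted to pairs in Section \ref{S:honesty}, every $\LL^P$-formula $\varphi(x)$ cutting out a subset of $E^n$ has an honest definition, i.e.\ an $\LL$-formula over $M$ whose trace on $E$ is $\varphi(E)$, and one sets $\dim^E(\varphi)$ to be the $\LL$-dimension in $M$ of a minimal such honest definition. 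Section \ref{S:honesty} establishes that $\dim^E$ is invariant, additive, and definable in families, so it furnishes generic elements and an independence relation over parameters; moreover the hypothesis that $\LL^P$-definably closed sets are special forces $\dim^E(\bar a/B)$ to be computed in the reduct $E\models T$ whenever $\bar a$ is a tuple from $E$.

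Next I would transport this picture into $T^0$ using stable control. Definition \ref{D:stab_controlled} binds the $\LL$-dimension in models of $T$ to the transcendence degree $\tr$ computed in models of the strongly minimal $T^0$; combining this with the previous paragraph, for $\dim^E$-generic independent $a,b\in G$ over $D$ one obtains $\tr(a/D)=\tr(b/D)=\tr(a\ast b/D)=\dim^E(G)$ and, crucially, $\tr(a\ast b/a,b,D)=0$, because $a\ast b\in\acl^{\LL^P}(a,b,D)$ already gives $\dim^E(a\ast b/a,b,D)=0$. Feeding this through the standard group configuration diagram attached to $(G,\ast)$ — the six points obtained from two independent generics of $G$ and the translation action of $G$ on itself, as in \cite{HP94}, cf.\ \cite{eH90} — all the algebraicity and independence relations it requires then hold for $\acl$ and $\tr$ in $T^0$: we have produced a genuine group configuration inside the stable theory $T^0$.

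Then I would invoke Hrushovski's group configuration theorem inside a monster model of $T^0$: it yields an $\LL^0$-definable group $H$ (a priori living in $T^{0,\mathrm{eq}}$) together with an $\LL^0$-definable germ of a generic bijection between $G$ and $H$ intertwining $\ast$ with the group law of $H$. Since the group law of $H$ is $\LL^0$-definable, pulling it back along this germ — and, using the topological structure on $E\models T$ (generic continuity of definable group laws), straightening the germ to a homeomorphism near the identity — shows that $\ast$ coincides with an $\LL^0$-definable function on a large definable set, a neighbourhood of $1_G$ (respectively a $\dim^E$-generic subset of $G\times G$); this is the asserted local $\LL^0$-definability. Finally, for the ``furthermore'' clause, let $(G,\ast)$ be an arbitrary small $\LL^P$-definable group, so that $G$ is contained in the image of $E^m$ under an $\LL$-definable map and is thus in $\LL^P$-definable bijection with an $\LL^P$-definable subset of some quotient $E^m/{\sim}$; elimination of imaginaries for $T$ together with Eleftheriou's elimination of imaginaries for the structure induced on $E$ by the pair (Section \ref{S:EI}, \cite{pE17}) replace this quotient by an $\LL^P$-definable subset of some $E^{k}$, carrying $(G,\ast)$ to an interdefinable group covered by the first part.

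The main obstacle throughout is the second step: verifying that the a priori only $\LL^P$-definable configuration genuinely satisfies the group-configuration axioms \emph{for $\tr$ in $T^0$} — this is exactly where the honest-definition dimension, the special hypothesis, and stable control must be woven together — and, relatedly, that $\dim^E$ carries enough stationarity (e.g.\ of generic types over models) for Hrushovski's theorem to apply and for the resulting germ to be defined over the expected parameters.
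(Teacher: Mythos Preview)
Your approach is essentially the paper's own: set up the honest-definition dimension on $\LL^P$-definable subsets of $E^n$, pick independent generics $a,b$ of $G$, use specialness and Fact~\ref{F:Dries}(\ref{F:dcl_special}) to see that $a\ast b\in\dcl(P(D),a,b)$, run the Hrushovski--Pillay group-configuration argument inside the strongly minimal $T^0$ to obtain an $\LL^0$-definable connected group $H$ together with $\LL$-definable local homeomorphisms intertwining $\ast$ and the law of $H$, and finally reduce arbitrary small groups to the case $G\subseteq E^k$ via Eleftheriou's elimination of imaginaries.

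Two points of execution are worth aligning with the paper. First, the paper does not try to verify stationarity or definability-in-families for $\dim^E$ directly; instead it passes once and for all to a saturated $\LL^P$-elementary extension $N$ and replaces the graph of $\ast$ by its honest definition $Z_Y\subseteq P(N)^{3k}$, which is a genuine $\LL$-definable set over a finite $A_0\subseteq P(N)$. This turns $\ast$ into an $\LL$-definable partial map $\odot$ and allows one to find generics in $P(N)$ by ordinary density (Lemma~\ref{L:dim_hon}); the configuration is then literally an $\LL$-configuration in the model $P(N)$ of $T$, so \cite[Proposition~3.1]{HP94} applies verbatim, and one descends back to $(M,E)$ at the end via $M\preceq^P N$. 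This sidesteps the obstacle you flag in your last paragraph. Second, the conclusion of Theorem~B (i.e.\ Theorem~\ref{T:main}) is a local intertwining on relatively open subsets $U,V,W$ of an $\LL_E$-definable $Z\subseteq G$ around \emph{generic} points $a,b,a\ast b$, not around the identity; promoting this to a local isomorphism near $1_G$ requires an a priori topological-group structure on $G$, which the paper only obtains in the o-minimal case (Corollary~\ref{C:Wenzel}) via Wencel's theorem for non-valuational weakly o-minimal structures.
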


In case of the reals or the
$p$-adics, we conclude that the group law is locally  an algebraic 
group law.  We do not know whether the group law is globally 
semialgebraic for small groups in dense pairs of $p$-adically or 
real closed fields.

\section{Dense pairs of topological fields}\label{S:dense}

In this section, we will recall the basic results on topological
geometric structures \cite{vD98, BDO11}.

\begin{definition}\label{D:topstr}
  A structure $M$ in the language $\LL$ is a \emph{topological
    structure} if there is a formula $\theta(x,\bar y)$, where $x$ is
  a single variable, such that the collection
  $\{\theta(M, \bar a)\}_{\bar a \in M^s}$ forms a basis of a proper
  topology, that is, a $T_1$ topology with no isolated
  points. 
\end{definition}

\begin{remark}
If $M$ is a topological structure, then so is every model of the theory
of $M$.

 Since the formula $\theta(x;\bar y)$ has the
order property, the theory of a topological structure as above is always unstable
\cite[Proposition 1.2]{aP87}.
\end{remark}

\begin{definition}\label{D:CDP}
A \emph{cell} in a topological structure $(M, \theta)$ is a definable set
$X\subset
M^n$ such that  $\pi(X)$ is open and homeomorphic to $X$, for some projection
$\pi:M^n\to M^m$ with respect to a given choice of coordinates. A 
topological 
structure $(M, \theta)$ has \emph{cell decomposition}
if, whenever $A\subset M$, the domain $X$ of every definable function
$F$ over $A$ can be partitioned into finitely many cells
$X_1,\ldots, X_m$, each definable over $A$, such that $F\restr {X_i}$ is
continuous for every $1\leq i\leq m$. 
\end{definition}

Taking as a definable function the characteristic function of a definable set, 
it follows that every definable set can be partitioned into finitely many 
cells. However, since for the union of cells may itself be a cell, note that 
the above cell decomposition is weaker than the corresponding versions for real 
or $p$-adically closed fields. 

The following definition was introduced in \cite{BDO11} in order to generalize 
the results of \cite{vD98} from the o-minimal context to the p-adically closed one. Notice that the last item in the definition below is slightly 
stronger than the corresponding one in \cite{BDO11}.

\begin{definition}\label{D:geomstr}(\cite[Definition 4.6]{BDO11} \& 
\cite[Definition 6.1]{hS01})

A complete theory $T$  in a language 
$\LL$ is a \emph{geometric theory of topological rings} if all of the following 
conditions hold:
\begin{itemize}

\item The theory $T$ eliminates $\exists^\infty$.
\item Algebraic closure satisfies the
  exchange principle in every model $M$ of $T$, that is, given a subset $A$
  and elements $b$ and $c$ of $M$ such that $c$ is in
  $\acl(A,b)\setminus \acl(A)$, then $b$ is algebraic over
  $A\cup\{c\}$.
\item The theory $T$ has Skolem functions (so definable and algebraic 
closure agree).
	\item Every model $M$ is a topological structure, witnessed by the formula 
$\theta(x,\bar y)$, with cell decomposition. 

\item Every definable unary subset of a model of $T$ is either
    finite or has non-empty interior.
 \item The language $\LL$ contains the ring symbols and $M$ is an integral 
domain, with the natural interpretations, such that the ring operations 
are continuous.

\item We have the following density conditions  (cf. \cite[Definition 
6.1]{hS01}): for every $\theta(M,\bar a )$ and every element $b$ 
in $\theta(M,\bar a )$, there exists an open neighbourhood $V$ of
$\bar a$ (in $M^{|\bar a|})$ and a tuple $\bar c$ such that for all $\bar d$ in 
$V$,
\[ b\in  \theta(M,\bar c) \subseteq \theta(M,\bar d).\]

\noindent Furthermore, the set of $\bar a'$ such 
that \[b \in \theta(M,\bar a') \subseteq \theta(M,\bar a)\] has non-empty 
interior.

\end{itemize}
\end{definition}

\noindent In particular, we will say that two subsets $A$ and $B$ of $M$ are
\emph{independent over a common subset} $C\subseteq A\cap B$, denoted
by $$A\ind_C B,$$ if the geometric dimension
\[ \dim(a_1,\ldots,a_n|C)=\dim(a_1,\ldots,a_n|B)\]
\noindent for any $a_1,\ldots,a_n$ in $A$. It follows from \cite[Theorem
9.5]{lM95} that the above dimension coincides with the topological dimension, which is
definable \cite[Corollary]{lM95}.

\begin{example}
  Real and $p$-adically closed fields, as well as real closed rings (in the 
  sense of Cherlin and Dickmann) are geometric topological rings 
  in the ring language  \cite{lM95, HP94}.
\end{example}

Recall that a \emph{dense pair} of models of a geometric theory $T$ 
topological rings is a model  $M$ of $T$
together with a proper elementary substructure $E\preceq M$ which is dense
in $M$ with respect to the definable topology.

\begin{notation} Given a language $\LL$ and a geometric $\LL$-theory $T$ of 
topological rings, we denote from now on by $T^P$ the theory of dense pairs
$(M, E)$ of models of $T$ in the 
language $\LL^P=\LL\cup\{P\}$, where
$P$ is a unary symbol whose interpretation is the proper submodel $E$.
Henceforth, the symbol $P$ will be used to refer to the theory $T^P$:
in particular, the definable closure in the language $\LL^P$ will be
denoted by $\dcl^P$, whereas $\dcl$ denotes the definable closure in
$\LL$. Given a subset $A\subseteq M$ of parameters, by
$\LL_A$-definable, resp. $\LL^P_A$-definable, we mean $\LL$-definable,
resp.  $\LL^P$-definable, over $A$. 
\end{notation}

Let us recall the description of definable sets in dense pairs given by van den Dries via the notion of a small set \cite{vD98}.
There is a natural notion of smallness and largeness for arbitrary
pairs, as introduced by Casanovas and Ziegler \cite{CZ01} in terms of
saturated extensions. Their definition agrees with van den Dries's
original definition  when the underlying theory is strongly
minimal (cf. \cite[Section 2]{vDG06}). We will use the latter one, which 
resonates with the notion of internality to the predicate from 
geometric 
stability theory.

\begin{definition}\label{D:small}
  A set $X\subseteq M^n$ is \emph{small} in the pair $(M,E)$ over $A$ if there is an
  $\LL_A$-definable function $h:M^m\to M^n$ such that
  $X\subseteq h(E^m)$.

   \end{definition}

In order to track the parameters necessary to describe definable sets in 
	such dense pairs, we introduce the following definition.

\begin{definition}\label{D:special}
	A subset $A$ of $M$ is \emph{special} if $A\ind_{E\cap A} E$.
\end{definition}

The following trivial observation will be used all throughout this work.

\begin{remark}\label{R:special}
	Subsets of $E$ are special. Whenever $B$ is a subset of $E$ and $A$ is
	special, so is $\dcl(A\cup B)$, with $E\cap\dcl(A\cup B)=\dcl(B, E\cap A)$.
\end{remark}

Though neither van den Dries nor Berenstein-Dolich-Onshuus refer to special sets when
studying the
theory $T^P$ in \cite{vD98, BDO11}, the next results follow immediately by 
appropriately
choosing the back-and-forth system:

\begin{fact}(see \cite{vD98, BDO11})\label{F:Dries} Let $T$ be a geometric 
theory of topological rings.
\begin{enumerate}
\item\label{F:complete}  The theory $T^P$ of dense pairs is 
complete.
\item\label{F:special}  The type of a special set is uniquely 
determined by its
quantifier-free $\LL^P$-type.
\item\label{F:induced} Any  $\LL^P$-definable subset $Y$ of $E^n$
 over a special set $A$ is the trace of an
  $\LL$-definable subset $Z\subseteq M^n$ over $A$:
$$ Z\cap E^n= Y.$$
\item\label{F:dcl_special} The $\LL^P$-definable closure of a special
  set $A$ agrees with its $\LL$-definable closure, and
  $E\cap \dcl^P(A)=\dcl(E\cap A)$.
\item\label{F:fcts} Every $\LL^P$-definable function $f:M\to M$ over a
  special set $A$ agrees with an $\LL_A$-definable function off 
  some  small set (cf. Appendix \ref{A:Dries}).
\item\label{F:open_1dim} For every $\LL^P$-definable unary set
  $X\subseteq M$ over a special set $A$, there are pairwise disjoint $\LL_A$-definable open
  sets $U$, $V$
  and $W$ of $M$  such that:

  \begin{itemize}
  \item $M\setminus (U\cup V\cup W)$ is finite.
  \item $U\subset X$ and $X\cap V=\emptyset$.
  \item $X\cap W$ is both dense and codense in $W$.
  \end{itemize}

\noindent Thus, if $X\subseteq M$ is small and $\LL^P$-definable, then the corresponding
open set $U$ is empty.
\end{enumerate}
\end{fact}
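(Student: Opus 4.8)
The plan is to derive all six items from a single back‑and‑forth system, set up so that \emph{specialness} is preserved at every stage. I would fix a cardinal $\kappa$ larger than every parameter set occurring below and two $\kappa$‑saturated, $\kappa$‑strongly homogeneous models $(M_1,E_1)$ and $(M_2,E_2)$ of $T^P$, and let $\mathcal I$ be the family of $\LL$‑elementary maps $f\colon A_1\to A_2$ with $A_i=\dcl(A_i)\subseteq M_i$ special of size $<\kappa$ and $f(A_1\cap E_1)=A_2\cap E_2$. Since $T$ has Skolem functions a $\dcl$‑closed set is already an elementary $\LL$‑substructure of the ambient model, so for $f$ it is enough to preserve atomic $\LL$‑formulas; and Remark \ref{R:special} — that $\dcl(A)$ is special, with $E$‑trace $\dcl(E\cap A)$, whenever $A$ is — will supply exactly the bookkeeping that keeps $\mathcal I$ stable under the extension steps. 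The system is non‑empty, as it contains the canonical map between the $\LL$‑prime models of $M_1$ and $M_2$, both of which are contained in $E_1$, resp.\ $E_2$.

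The core is the extension step: given $f\colon A_1\to A_2$ in $\mathcal I$ and $b\in M_1$, extend $f$ inside $\mathcal I$ so that $b$ enters the domain. I would split according to whether $b$ is \emph{generic} over $A_1\cup E_1$, i.e.\ $\dim(b/A_1E_1)=1$, or $b\in\acl(A_1\cup E_1)=\dcl(A_1\cup E_1)$ (the equality by exchange and Skolem functions). In the generic case, realise $f_{\ast}(\tp^{\LL}(b/A_1))$ — a dimension‑one $\LL$‑type over $A_2$ — by some $b'\in M_2$ generic over $A_2\cup E_2$, which $\kappa$‑saturation provides; then $b,b'\notin E$, the exchange principle shows $\dcl(A_1\cup\{b\})$ and $\dcl(A_2\cup\{b'\})$ still have $E$‑trace $A_1\cap E_1$ and $A_2\cap E_2$, additivity of the dimension shows they are again special, and $f\cup\{(b,b')\}$ extends $\LL$‑elementarily to the required member of $\mathcal I$. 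In the algebraic case, write $b=g(\bar a,\bar e)$ with $\bar a$ from $A_1$ and $\bar e$ from $E_1$, and adjoin the coordinates of $\bar e$ one at a time: this reduces to adjoining a single $e\in E_1$, after which $b$ already lies in the $\dcl$‑closure and $f$ extends uniquely. Adjoining such an $e$ amounts, on the other side, to realising the $\LL$‑type $f_{\ast}(\tp^{\LL}(e/A_1))$ by an element $e'$ of $E_2$: this is trivial when the type is algebraic, and otherwise is precisely the content of the sublemma below, and the extension to $\dcl(A_2\cup\{e'\})$ stays in $\mathcal I$ by Remark \ref{R:special}.

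The hard part will be that sublemma — the one place the topology genuinely enters: every non‑algebraic (equivalently, dimension‑one) $\LL$‑type $q$ over a small set is realised inside $E$. Each $\LL$‑formula in $q$ defines an infinite unary set (its realisations are non‑algebraic), hence one with non‑empty interior by the corresponding axiom of Definition \ref{D:geomstr}, hence one that meets the dense set $E$; thus $q$ is finitely satisfiable in $E$, so the $\LL^P$‑type $q(x)\cup\{P(x)\}$ is consistent, and being over a set of size $<\kappa$ it is realised in $(M_1,E_1)$ — that is, by a point of $E_1$. Carrying this out on both sides of the back‑and‑forth simultaneously, while keeping track of which basic open sets $\theta(M,\bar a)$ must be hit, is what the density conditions in the last clause of Definition \ref{D:geomstr} are designed for; they take over the role played by convexity in the o‑minimal setting of \cite{vD98}, and I expect this to be the most delicate and axiom‑sensitive part of the whole argument.

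It then remains to read the six statements off the back‑and‑forth. Item \ref{F:complete} is immediate from $(M_1,E_1)\equiv(M_2,E_2)$. For \ref{F:special}, replace $A$ by $\dcl(A)$, which is special and $\dcl$‑closed; its quantifier‑free $\LL^P$‑type determines both its $E$‑trace and, via the Skolem terms, its whole $\LL$‑type, hence (by the back‑and‑forth) its $\LL^P$‑type, and one restricts to $A$. For \ref{F:induced}, if $b,b'\in E^n$ have the same $\LL$‑type over the special set $A$, then $\dcl(A\cup\{b\})$ and $\dcl(A\cup\{b'\})$ are special with equal quantifier‑free $\LL^P$‑type over $A$, so by \ref{F:special} equal $\LL^P$‑type; hence $Y\cap E^n$ is a union of complete $\LL$‑types over $A$ and, by compactness, the trace on $E^n$ of an $\LL_A$‑definable $Z\subseteq M^n$. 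For \ref{F:open_1dim}, over each dimension‑one $p\in S_1^{\LL}(A)$ the realisations generic over $A\cup E$ all carry one and the same $\LL^P$‑type over $A$, so they lie all in $X$ or all outside $X$, while the remaining realisations of such $p$ (those in $E$ and those in $\dcl(A\cup E)$) are small; using definability of the dimension and \ref{F:induced} one carves out the $\LL_A$‑definable open sets $U\subseteq X$ (where $X$ is generically present), $V$ with $X\cap V=\emptyset$ (where $X$ is generically absent), and $W$ (the remainder, minus the finitely many algebraic points), with $X\cap W$ dense and codense in $W$. Then \ref{F:dcl_special} follows by applying \ref{F:open_1dim} to the singleton $X=\{b\}$ for $b\in\dcl^P(A)$: being small, $\{b\}$ has empty $U$ by the final clause of \ref{F:open_1dim}, is disjoint from $V$, and cannot meet the (isolated‑point‑free, hence infinite) open set $W$ densely, so $\{b\}\cap W=\emptyset$; therefore $b$ lies in the finite $\LL_A$‑definable remainder, whence $b\in\acl(A)=\dcl(A)$, and $E\cap\dcl^P(A)=\dcl(E\cap A)$ then follows from Remark \ref{R:special} (with $B=\emptyset$). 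Finally \ref{F:fcts} requires a separate argument — the one repairing the gap in \cite[Corollary 3.4]{vD98} — which I would carry out in Appendix \ref{A:Dries}.
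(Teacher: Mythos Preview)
Your back-and-forth system and the derivations of items (\ref{F:complete})--(\ref{F:induced}) are correct and match the paper's approach. Your route to (\ref{F:dcl_special}) via (\ref{F:open_1dim}) is valid but needlessly circuitous; (\ref{F:dcl_special}) follows directly from the back-and-forth by the same case split (generic over $A\cup E$ versus in $\dcl(A\cup E)$) that you already set up for the extension step.

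The genuine gap is in (\ref{F:open_1dim}). From the back-and-forth you correctly extract that for $b$ generic over $A\cup E$, whether $b\in X$ depends only on $\tp^{\LL}(b/A)$; but the step ``using definability of the dimension and (\ref{F:induced}) one carves out $U$, $V$, $W$'' is not justified. Item (\ref{F:induced}) concerns traces on $E^n$, not subsets of $M$, and the compactness argument you would need to show that the relevant set of nonalgebraic $\LL$-types over $A$ is clopen requires realising a partial type over $A\cup E$ --- which fails, since the pair is not saturated over $E$. The paper's route (Appendix~\ref{A:Dries}) is genuinely different and uses an ingredient you are missing. The back-and-forth first yields the quantifier reduction (Fact~\ref{F:quantifier}), reducing to unary sets of the form $\bigcup_{\bar e\in E^m}\psi(M,a,\bar e)$; cell decomposition splits each $\psi(M,a,\bar e)$ into a finite piece (these assemble into a small set via Skolem functions) and an open piece $U_{\bar e}$. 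The crucial new input is Lemma~\ref{L:familyopen}: for any uniformly $\LL_A$-definable family $(U_y)_{y\in Y}$ of open subsets of $M$, the union $\bigcup_{y\in Y\cap E^k}U_y$ is itself $\LL_A$-definable. Its proof is by induction on $k$ and uses the density conditions of Definition~\ref{D:geomstr} in a way independent of their role in the back-and-forth. This yields Proposition~\ref{P:descrip1ario} (every $\LL^P_A$-definable unary set differs from an $\LL_A$-definable one by small sets), from which (\ref{F:open_1dim}) follows as in \cite{vD98,BDO11}. You should either supply Lemma~\ref{L:familyopen} explicitly or, like the paper, defer (\ref{F:open_1dim}) to the Appendix alongside (\ref{F:fcts}).
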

\begin{remark}\label{R:small_fte} To prove the above fact in the context of 
geometric theories \cite{BDO11}, it is crucial that a number of results 
in \cite{vD98} also hold in this context. For example, we also have the 
following two relevant properties:

\begin{enumerate}
\item If $(M,E)$ is a saturated model of $T^P$, notice that $M$ is not $\LL^P$-saturated over
the set of parameters $E$, but the rank of $M$ over $E$ (with respect to the pregeometry in $T$) is at least $\kappa$ (see \cite[Lemma 1.5]{vD98}).

\item Every small and $\LL$-definable unary subset of $M$ is finite: The set 
$M\setminus E$ is codense in $M$ (see the Claim in \cite[Thm.\,4.9]{BDO11}) and 
therefore no open $\LL$-definable subset of $M$ can be small \cite[Lemma 
4.1]{vD98}. We finish by applying Fact $\ref{F:Dries}(\ref{F:open_1dim})$.
\end{enumerate}
\end{remark}

If $X\subseteq M$ is open and  $\LL_A^P$-definable, then $X$ is
$\LL_A$-definable, by Fact $\ref{F:Dries}(\ref{F:open_1dim})$, for the corresponding set
$W$ is empty. The above characterization of
$1$-dimensional $\LL^P$-definable open sets as $\LL$-definable sets has been
shown for arbitrary finite cartesian products of the universe  in various settings \cite{vD98,
MS99, DMS10, 	BH12, fP11, mT18}, archimedean and non-archimedean. For the 
sake of 
completeness, we will provide a different
proof of geometric nature (cf. Theorem \ref{T:opendef}), which is an immediate 
consequence of a general criterion (cf. Proposition \ref{P:curvfunc}) to 
determine whether an $\LL^P$-definable correspondence is $\LL$-definable. We 
first need an auxiliary result. 

\medskip
{\bf Henceforth, we work inside a sufficiently saturated dense pair of models 
$(M,E)$ of a geometric theory $T$ of topological rings}.

\begin{lemma}\label{L:lemacurvasima} Given an $\LL^P$-definable unary function $h$
with domain an infinite definable subset $X$ of $M$ over a special subset $A$, there
exist a special superset $D\supseteq A$ and an $\LL_D$-definable curve
	$\alpha:\theta(M, \bar b) \rightarrow M^2$ such that the intersection of
	$\Ima(\alpha)$ with the graph of $h$ is infinite.
\end{lemma}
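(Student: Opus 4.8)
The plan is to reduce to the approximation of unary functions in the pair (Fact $\ref{F:Dries}(\ref{F:fcts})$, cf.\ Appendix \ref{A:Dries}) and then build $\alpha$ by parametrising the graph of the approximating reduct function over a basic open set whose defining parameters lie in $E$, so that adjoining them to $A$ preserves specialness. Write $\Gamma_h\subseteq M^2$ for the graph of $h$. First I would extend $h$ by $0$ outside $X$ to a total $\LL^P_A$-definable function $M\to M$, so that Fact $\ref{F:Dries}(\ref{F:fcts})$ applies and yields an $\LL_A$-definable $g\colon M\to M$ together with a small $\LL^P_A$-definable set $S$ with $h=g$ off $S$; I would then cell-decompose $g$ over $A$ (Definition \ref{D:CDP}) and discard the finitely many zero-dimensional cells, to be left with $\LL_A$-definable open sets $U_1,\dots,U_r$, of cofinite union, on which $g$ is continuous.

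Next I would split on whether $X\setminus S$ is infinite. If it is, applying Fact $\ref{F:Dries}(\ref{F:open_1dim})$ to $X\setminus S$ produces a nonempty $\LL_A$-definable open set in which $X\setminus S$ is dense; intersecting it with a suitable $U_j$ gives such an open set $O$ contained in a continuity cell of $g$, and the density conditions of Definition \ref{D:geomstr} together with the density of $E^s$ in $M^s$ yield a basic open $\theta(M,\bar b)\subseteq O$ with $\bar b$ a tuple from $E$. Then $(X\setminus S)\cap\theta(M,\bar b)$ is dense, hence infinite, in $\theta(M,\bar b)$, the set $D=\dcl(A\cup\bar b)$ is special by Remark \ref{R:special}, and $\alpha\colon\theta(M,\bar b)\to M^2$, $t\mapsto(t,g(t))$, is an $\LL_D$-definable curve with $\Ima(\alpha)\cap\Gamma_h\supseteq\{(t,g(t)):t\in(X\setminus S)\cap\theta(M,\bar b)\}$ infinite.

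If instead $X\setminus S$ is finite, then $X$ is small, say $X\subseteq\psi(E^m)$ with $\psi$ an $\LL_A$-definable function and $m$ chosen minimal. Here I would use the induced structure on the predicate: for $\bar e\in E^m$ with $\psi(\bar e)\in X$ the set $A\cup\bar e$ is special (Remark \ref{R:special}), so $h(\psi(\bar e))\in\dcl^P(A\cup\bar e)=\dcl(A\cup\bar e)$ by Fact $\ref{F:Dries}(\ref{F:dcl_special})$; compactness and elimination of $\exists^\infty$ turn this into finitely many $\LL_A$-definable functions $\eta_1,\dots,\eta_N$ such that $h(\psi(\bar e))=\eta_i(\bar e)$ for some $i$ on this set, whence $\Gamma_h\subseteq\bigcup_i\Ima(\psi,\eta_i)$. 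Since $\Gamma_h$ is infinite, one $\Gamma_{h,i}=\{(\psi(\bar e),\eta_i(\bar e)):\bar e\in E^m,\ \psi(\bar e)\in X,\ h(\psi(\bar e))=\eta_i(\bar e)\}\subseteq\Gamma_h$ is infinite; minimality of $m$ forces $t\mapsto\psi(t,\bar u')$ to be non-constant for generic $\bar u'\in E^{m-1}$, and fixing such a $\bar u'$ I would parametrise the $\LL_{A\bar u'}$-definable curve $t\mapsto(\psi(t,\bar u'),\eta_i(t,\bar u'))$ over a basic open set with parameters in $E$, obtaining $\alpha$ and a special $D\supseteq A\cup\bar u'$.

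The hard part will be this last case. Without a dimension theory for subsets of $E^k$ — which is developed only later, in Section \ref{S:honesty} — one has to argue directly, through the induced structure on $E$, the density of $E$, and cell decomposition for $\psi$ and the $\eta_i$, that a single $\LL_D$-definable curve over a basic open set really does meet $\Gamma_h$ infinitely, all while adjoining only parameters from $E$ so that $D$ stays special. By contrast, when $X\setminus S$ is infinite the difficulties are purely organisational: arranging the basic open set to sit inside both $O$ and a continuity cell of $g$, and choosing its parameters in $E$.
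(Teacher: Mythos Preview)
Your treatment of the case where $X\setminus S$ is infinite is correct and, in fact, slightly broader than the paper's split (the paper separates ``$X$ not small'' from ``$X$ small'', whereas your first case also absorbs the situation where $X$ is small but happens to agree with an $\LL_A$-function on an infinite set). The curve $t\mapsto (t,g(t))$ over a basic open with parameters in $E$ does the job there.

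The genuine gap is in your small case. Having found an index $i$ with $\Gamma_{h,i}$ infinite, you propose to fix a generic $\bar u'\in E^{m-1}$ and use the slice curve $t\mapsto(\psi(t,\bar u'),\eta_i(t,\bar u'))$. Minimality of $m$ may well force $t\mapsto\psi(t,\bar u')$ to be non-constant, but that is far weaker than what you need: you must show that infinitely many $t$ in the domain satisfy both $\psi(t,\bar u')\in X$ and $h(\psi(t,\bar u'))=\eta_i(t,\bar u')$. The set $Y_i=\{\bar e\in E^m:\psi(\bar e)\in X,\ h(\psi(\bar e))=\eta_i(\bar e)\}$ can be infinite while every coordinate slice $Y_i\cap(E\times\{\bar u'\})$ has finite $\psi$-image (think of a diagonal-type set in $E^m$); nothing in minimality of $m$, the induced structure, or cell decomposition of $\psi$ and $\eta_i$ rules this out. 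The paper's argument is genuinely different here and does not proceed by slicing: after reducing to $f$ nowhere locally constant on an open cell $C\subseteq M^\ell$, it fixes $e\in C\cap E^\ell$ and considers the \emph{lines} $J_v=\{e+tv\}_{t\in M}$ for $v\in E^\ell$. A dimension count shows that if $f(J_v\cap\mathcal U)$ were finite for every such $v$, then $f^{-1}(f(e))$ would be $\ell$-dimensional, contradicting nowhere local constancy. This is precisely where the ring structure is used (to define $J_v$), and the paper's own Remark~\ref{R:rings_lemma} records that the authors do not know how to avoid it; your coordinate-slicing plan amounts to attempting exactly such an avoidance.
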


\begin{proof} If $X$ is not small, Fact $\ref{F:Dries}(\ref{F:fcts})$
	yields the desired result. Thus, suppose that $X$ is small, so
	$X \subseteq f(E^\ell)$, for some $\LL_A$-definable map $f$. By
	choosing a suitable cell decomposition of $M^\ell$, we can assume that
	$X=f(E^\ell\cap C)$, where $C$ is an
	$\LL_A$-definable open cell $C$ of $M^\ell$ and $f\restr{C}$ is
	continuous.

	Consider the $\LL_A$-definable open subset
	$$C_0=\{x\in C: f\text{ is locally constant at } x\}$$
	of $C$. For each $x$ in $C_0$,  the fiber $f^{-1}(f(x))$ is
	$\ell$-dimensional. An easy dimension computation yields that $f(C_0)$ must be finite.

 Since $X$ is infinite, the set $C\setminus C_0$ is infinite as well. If the
	interior of  $C\setminus C_0$ happens to be trivial, a suitable cell 
	decomposition yields an
	$\LL_A$-definable subset $W$ of $C\setminus C_0$ and a projection 
	$\pi:M^\ell\rightarrow M^r$ for some choice of $r$ coordinates,
	with $r<\ell$  such that the restriction of $\pi$ to $W$ produces a 
	homeomorphism between
	$W$ and its open image $\pi(W) \subseteq M^r$. Since $f( (C\setminus C_0)\cap
	E^\ell)$ is infinite, we may assume that $f(W\cap E^\ell)$ is
	infinite as well, and proceed by induction on $r$.

 Therefore, we need only consider the case when $C\setminus C_0$
	has non-empty interior. Renaming its interior as $C$ again, we may assume that
	$f$ is nowhere locally constant on $C$.

 Let now $e$ be in
	$C\cap E^\ell$. By the density condition in Definition \ref{D:geomstr}, 
	there is a basic neighbourhood $\mathcal U$
	defined over some $D_0\subseteq E$ such that $ e \in \mathcal U$. In particular, the
	open set $\mathcal U\cap C$ is defined over the set $D=A\cup D_0\cup\{e\}$, which is
	again special, by  the
	Remark \ref{R:special}.

 We will show that there is a line $J$ defined over $E$ such that  $f(J\cap
\mathcal U \cap E^\ell)$ is infinite. Choose some basic neighbourhood $\mathcal V$ of
$\bar 0$ in $M^\ell$. Again, we may enlarge $D$ to assume that $\mathcal V$ is defined
over $D$. For each $v$ in $\mathcal V$, let $J_v$ be
the line
\[
J_v=\{e+t\cdot v \}_{t \in M}\] and set \[ V=\{ v\in \mathcal V \,|\, f(J_v\cap \mathcal U )
\text{
is finite}  \}.\]

\noindent If $\mathcal V \cap E^\ell \subseteq V$, then choose some $v$ in $\mathcal V
\cap E^\ell$ of dimension $\ell$ over $D$. The ring operations are continuous, so there is a
basic neighbourhood $\theta(M,\bar a_0)$ of $0$ such that for all $t$ in $\theta(M,\bar
a_0)$, we have that $e+t\cdot v$ is
contained in $\mathcal U$. Again, we may assume that $\bar a_0$ lies in $D$, by 
the the density condition in Definition \ref{D:geomstr}.

 The set $f(J_v\cap \mathcal U)$ is finite and contains $f(e)$. Since the 
 function
$f$ is continuous, the preimage
$f\inv(f(e))\cap J_v\cap \mathcal U$ is a non-empty open $\LL$-definable subset of
$J_v\cap \mathcal U$. Thus, so is the $\LL$-definable subset of $M$

\[ \mathcal{T}= \{t \in \theta(M,\bar a_0)\, |\,  f(e+t \cdot v)=f(e)\}.\]

\noindent Pick some $t$ in $\mathcal{T}$ generic over $D\cup\{v\}$. Since
 \[ f(e+t\cdot v)=f(e),\]

\noindent we have that \begin{multline*} \ell+1 =\dim(v,t/D)= \dim(e+t\cdot v, t/D ) = \\
\dim(e+t\cdot v/D) +
\dim(t/D, e+t\cdot v)=
\dim(e+t\cdot v/D) + 1.
\end{multline*}

\noindent As the element $e+t\cdot v$ lies in the $\LL_D$-definable subset
$f\inv(f(e))$ of $M^\ell$, the set $f^{-1}(f(e))$
is $\ell$-dimensional, so it contains an open subset of $M^\ell$, which 
contradicts our
assumption that $f$ was nowhere locally constant on $C$.

Hence, there is some  $v$ in $\mathcal V\cap E^\ell$ which does not lie in $V$,
that is, the set $f(J_v\cap \mathcal U)$ is infinite. Projecting onto a suitable
coordinate, we find a basic open neighbourhood $\theta(M,\bar b)$ defined over $E$ and
an $\LL_E$-definable function $g: \theta(M,\bar b) \to M^\ell$ such that  
$g(\theta(M,\bar
b)\cap E)\subseteq C$ and $f(g(\theta(M,\bar b)\cap E))$ is infinite. As before, we may
assume that  $g$ is $\LL_D$-definable, up to enlarging $D$.

 For every element $x$ in $\theta(M,\bar b)\cap E$, the set $\dcl(D, x)$ is
special and $\LL^P$-definably closed, by the Remark \ref{R:special}. Note that 
it contains $\tilde h(x)=h\circ
f\circ g (x)$. By saturation, there are finitely many $\LL_D$-definable functions
$h_1,\ldots,h_n:\theta(M,\bar b)\rightarrow M$ such that, whenever $x$ lies in
$ \theta(M,\bar b)\cap E$, then $\tilde h(x)=h_i(x)$ for some $1\leq i\leq n$. Without loss of
generality, the function $\tilde h$ coincides with $h_1$ infinitely often on $ \theta(M,\bar
b)\cap E$. Set now

\[\begin{array}{rccl}
\alpha: &  \theta(M,\bar b) & \to &  M^2\\[1mm]
& t & \mapsto & (f\circ g(t),h_1(t))

\end{array}  \]

\noindent By construction, the graph of the curve $\alpha$ has an infinite 
intersection with the image of $h$.
\end{proof}

\begin{remark}\label{R:rings_lemma}
In the proof of the previous lemma, it was only used that $\LL$ contains the 
language of rings in order to define the line $J$. We do not 
know whether there is a more general set-up in which Lemma 
\ref{L:lemacurvasima} holds.
\end{remark}

\begin{definition}\label{D:tame_Pairs}
A geometric theory $T$ of topological rings is \emph{tame for pairs} if every 
$\LL^P$-definably closed subset in a dense pair $(M,E)$ of models of $T$ is special. 
\end{definition}

\begin{example}
Real and $p$-adically closed fields are tame for pairs (it will be  shown in Lemma 
\ref{Lem:StablyTame}). 
\end{example}

\begin{question}
Is the theory of real closed rings tame for pairs? 
\end{question}

\medskip
{\bf For the rest of this section, we will assume that the geometric theory $T$ of 
topological rings is tame for pairs.}.

\begin{definition}
A correspondence $R\subseteq M^n\times M$ is  a \emph{multi-function} if the collection
$R(\bar x)=\{y \in M\,|\, (\bar x, y) \in R\}$ is finite. We will use the 
notation
$R:M^n\dashrightarrow M$ to denote that $R$ is a multi-function.
 \end{definition}

\begin{prop}\label{P:curvfunc}
	Let $R:M^n\dashrightarrow M$ be an $\LL^P$-definable
	multi-function over a special
	subset  $A$.   The multi-function $R$ is $\LL_A$-definable if
	and only if the multi-function
	$R\circ   \alpha$ is  $\LL_D$-definable, whenever $D\supseteq A$ is special
	and $\alpha$ is an
	$\LL_D$-definable curve with domain an open subset of $M$.
\end{prop}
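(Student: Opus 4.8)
The plan is to prove the non-trivial direction: assuming $R\circ\alpha$ is $\LL_D$-definable for every special $D\supseteq A$ and every $\LL_D$-definable curve $\alpha$ with open domain, we must show $R$ itself is $\LL_A$-definable. The natural strategy is to reduce to the $1$-dimensional case handled by Lemma \ref{L:lemacurvasima} and then feed the outcome into Fact \ref{F:Dries}. First I would argue by induction on $n$; for $n=1$ the ``curve'' $\alpha$ can simply be taken to be (a restriction of) the identity on $M$, so the hypothesis immediately gives that $R$ is $\LL_A$-definable on a neighbourhood of any point, and a compactness/saturation argument over the special set $A$ patches these local $\LL$-definitions into a single $\LL_A$-definition. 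For the inductive step $n>1$, the idea is to slice: for a parameter $\bar c$ in a special extension $D=\dcl(A,\bar c)$, the fibre $R_{\bar c}:M^{n-1}\dashrightarrow M$ is an $\LL^P$-definable multi-function over the special set $D$, and one checks that it again satisfies the curve hypothesis (any $\LL_{D'}$-curve into $M^{n-1}$ composed with the inclusion of the slice is an $\LL_{D'}$-curve into $M^n$). By induction each slice $R_{\bar c}$ is $\LL_D$-definable, and then a further compactness argument, together with Fact \ref{F:Dries}(\ref{F:induced}) to control how the $\LL$-definitions of the slices vary with $\bar c$, assembles a global $\LL_A$-definition of $R$.

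More concretely, I expect the cleaner route to use Lemma \ref{L:lemacurvasima} directly rather than a hand-rolled slicing argument. Suppose for contradiction that $R$ is \emph{not} $\LL_A$-definable. Since $R$ has finite fibres, there is a coordinate along which $R$ genuinely depends $\LL^P$-definably but not $\LL$-definably; fixing all but one variable by generic parameters from a special superset $D\supseteq A$ yields an $\LL^P$-definable unary multi-function, hence (after splitting into finitely many branches via $\exists^\infty$-elimination) an $\LL^P$-definable unary \emph{function} $h$ over $D$ that does not agree with any $\LL_D$-definable function on any infinite set. Applying Lemma \ref{L:lemacurvasima} to $h$ produces a special $D'\supseteq D$ and an $\LL_{D'}$-definable curve $\alpha:\theta(M,\bar b)\to M^2$ whose image meets the graph of $h$ infinitely often. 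The hypothesis tells us $R\circ(\text{first coordinate of }\alpha)$ is $\LL_{D'}$-definable; since the second coordinate of $\alpha$ is one of its finitely many $\LL_{D'}$-definable branches, $h$ must coincide with an $\LL_{D'}$-definable function on the infinite intersection, contradicting the choice of $h$.

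The main obstacle, and the step requiring the most care, is the bookkeeping that reduces ``$R$ is not $\LL_A$-definable'' to ``some unary $h$ is bad'': one needs to make sure that passing to a one-variable slice over generic special parameters preserves both the $\LL^P$-definability of $R$ and, crucially, the failure of $\LL$-definability — this is where the tameness-for-pairs hypothesis and Fact \ref{F:Dries}(\ref{F:induced}), (\ref{F:dcl_special}) are used, to guarantee that an $\LL_{D'}$-definition of the slice would glue back to an $\LL$-definition of $R$ over $A$ via the usual type-counting over special sets. A secondary technical point is to handle the finitely-many-branches phenomenon cleanly: both $R\circ\alpha$ and the graph of $h$ are multi-functions, so one must track which branch is which and invoke $\exists^\infty$-elimination together with saturation to reduce to honest functions without losing infinitude of the relevant intersection. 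Once this reduction is set up, the appeal to Lemma \ref{L:lemacurvasima} is immediate and closes the argument.
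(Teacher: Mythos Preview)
Your inductive framework matches the paper's opening move: slice off one variable, apply induction to each slice, then reassemble. The base case $n=1$ is indeed immediate with $\alpha(t)=t$ (no local patching is needed). However, both routes you propose have a genuine gap at precisely the step you label ``bookkeeping'', and that step is in fact the entire content of the argument.

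In your first route, induction gives that each $R(\,\cdot\,,b)$ is $\LL_{\dcl^P(A,b)}$-definable, and compactness then yields $R(x,b)=\widetilde R(x,g(b))$ for an $\LL_A$-definable multi-function $\widetilde R$ and an $\LL^P_A$-definable map $g:M\to M^\ell$. But $g$ is only $\LL^P$-definable, so this does not yet exhibit $R$ in $\LL_A$, and Fact~\ref{F:Dries}(\ref{F:induced}) is not the relevant tool (it concerns traces on $E^n$). What the paper uses is Fact~\ref{F:Dries}(\ref{F:fcts}): replace $g$ by an $\LL_A$-definable $\widetilde g$ agreeing off a small set, obtaining an $\LL_A$-definable first approximation $R_0(x,b)=\widetilde R(x,\widetilde g(b))$. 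The error locus $Y_x=\{b:R(x,b)\neq R_0(x,b)\}$ is then \emph{finite} for each $x$, and the substantial work is to show that the stratification $X_\ell=\{x:|Y_x|=\ell\}$, the enumerating multi-function $x\mapsto Y_x$, and the correction values $x\mapsto R(x,\tilde h_j(x))$ are all $\LL_A$-definable. Each of these requires a fresh appeal to the curve criterion (so the induction is used again, on auxiliary objects), and Lemma~\ref{L:lemacurvasima} enters here, applied to the functions $h_j\circ\beta$ along a test curve $\beta$, to rule out the dense--codense region $W$ of Fact~\ref{F:Dries}(\ref{F:open_1dim}). An iteration on the minimal fibre size then terminates the correction. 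None of this is present in, or follows from, ``type-counting over special sets''.

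Your second route fails more concretely. The unary branch $h$ you want, one that agrees with no $\LL_D$-definable function on any infinite set, need not exist: if its domain is not small, Fact~\ref{F:Dries}(\ref{F:fcts}) forces $h$ to agree with an $\LL_D$-definable function off a small set, hence on an infinite set. Even granting such an $h$ on a small domain, the contradiction does not close: Lemma~\ref{L:lemacurvasima} produces its curve over a strictly larger special set $D'\supseteq D$, and your hypothesis then only shows $h$ agrees with an $\LL_{D'}$-definable function on an infinite set, which does not contradict the assumed badness of $h$ over $D$. The paper does not use Lemma~\ref{L:lemacurvasima} for a global contradiction of this shape; it uses it locally, inside the correction step described above.
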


\begin{proof}

	One implication is trivial, so we need only show that $R$ is
$\LL_A$-definable, assuming it satisfies the right-hand condition.
We prove it by induction on $n$. For $n=1$, the curve $\alpha(t)=t$ is
clearly $\LL_A$-definable, and thus so is $R=R\circ\alpha$ by hypothesis.

 Assume thus that $n>1$ and the statement holds for all $k<n$.
Given $b$ in $M$, consider the multi-function

\[ \begin{array}{rccc}
R(\cdot,b):&M^{n-1}&\dashrightarrow & M \\
& x &\mapsto& R(x,b)
\end{array}
\]

\noindent which is clearly $\LL^P_{\dcl^P(A,b)}$-definable. Since it
satisfies the same condition as $R$, we deduce that
$R(\cdot,b)$ is $\LL_{\dcl^P(A,b)}$-definable, by induction on $n$ (Note that 
the assumption that the theory is tame for pairs yields that $\LL^P$-definably 
closed sets are special). Compactness yields an 
$\LL_A$-definable multi-function
\[\widetilde{R}:M^{n-1+\ell}\dashrightarrow M,\]

\noindent and an $\LL_A^P$-definable map
\[g=(g_1,\ldots,g_\ell):M\rightarrow M^\ell\]
\noindent such that $R(x,b)=\widetilde{R}(x,g(b))$ (as sets) for all $(x,b)$ in
$M^n$. Since the union of small sets is again small, there is an
$\LL_A$-definable map $\widetilde{g}:M\rightarrow M^\ell$ such that
$g$ and $\widetilde{g}$ agree off a small $\LL_A^P$-definable subset
of $M$, by  Fact $\ref{F:Dries}(\ref{F:fcts})$. Consider the following $\LL_A$-definable
multi-function:
$$R_0(x,b)= \widetilde{R}(x,\widetilde{g}(b)).$$
As before,  for each $x$ in $M^{n-1}$, the
multi-function
\[ \begin{array}{rccc}
R_x:&M&\dashrightarrow & M \\
& b &\mapsto& R(x,b)
\end{array}
\]
is $\LL_{\dcl^P(A,x)}$-definable . Hence the
set
\[ Y_x:=\{b \in M: R_x(b)\neq R_0(x,b) \textrm{ (as sets)}\}\]
\noindent is $\LL_{\dcl^P(A,x)}$-definable. Since $Y_x$ is small for every $x$ in $M^{n-1}$,
it must be finite, by the Remark \ref{R:small_fte}.  Compactness yields that there is
some natural number $k$ such that $\text{card}(Y_x)\leq k$, for every $x$ in 
$M^{n-1}$. 

By compactness, there is an $\LL^P_A$-definable function 
$\gamma:M^{n-1}\rightarrow M^\ell$ and a formula $\varphi(y,w)$ such that
$Y_x=\varphi(M,\gamma(x))$ for every $x$ in $M^{n-1}$. Consider now the 
$\LL^P_A$-definable set
\[ X=\{a\in M^{n-1}\,:\, Y_a\neq\emptyset\}.\]
 The (finite) Skolem
functions for $\varphi$ produce finitely many $\LL^P_A$-definable functions 
$h_1,\ldots, h_k:M^{n-1}\to M$
such that $Y_x=\{h_1(x),\ldots, h_k(x)\}$ (possibly with repetitions) for every  $x$ in $X$.
If $Y_x$ has cardinality $\ell\leq k$, then note that
$Y_x:=\{h_1(x),\ldots,h_\ell(x)\}$ and $h_j(x)=h_\ell(x)$ for all $j\geq \ell$.

 If $k=0$, then we are done. Otherwise, let $\ell \leq k$ be the least cardinal 
 of a
non-empty fiber $Y_x$, for some $x$ in $X$. We will first show that
$$X_\ell:=\{x\in M^{n-1}: \text{card}(Y_x)=\ell\}$$
is $\LL_A$-definable. Consider the characteristic
function $\1_{X_\ell}$ of $X_\ell$ with domain $M^{n-1}$. Since functions are
multi-functions, we
need only show, by induction, that $1_X\circ \beta$ is $\LL_{D}$-definable,
whenever we take an $\LL_D$-definable curve $\beta:I\rightarrow M$ with domain an open
subset $I$ of $M$, where $D$ is special and $D\supseteq A$.

 By Fact \ref{F:Dries} $(\ref{F:open_1dim})$,  there are $\LL_D$-definable open
sets $U$, $V$ and
$W$ of $M$ such that $M\setminus (U\cup V\cup W)$ is finite, the set $U$ is contained
in
\[Z=\{t \in I\,\mid \1_{X_\ell}\circ\beta\,(t) =1 \},\]
the set
$V$ is disjoint from $Z$ and $Z\cap W$ is dense and codense in $W$.  It 
suffices to show that $W$ is empty. For each $i\leq k$, Fact \ref{F:Dries} 
$(\ref{F:fcts})$ yields that there is an $\LL_D$-definable function
$\gamma_i:I\rightarrow M^{n-1}$ which agrees with $h_i\circ \beta$ off a
small set $S_i$. Note that
the set
$$\{t\in I: R(\beta(t),\gamma_i(t))=R_0(\beta(t),\gamma_i(t))\}$$
is $\LL_D$-definable and contained in the small subset $S_i$, so it must be finite, by the
Remark \ref{R:small_fte}. Without loss of generality, we may  assume that 
$\gamma_i(t)$ belongs to
$Y_{\beta(t)}$. Likewise, the $\LL_D$-definable set 
$$\{t\in I \, \mid\, \#\{\gamma_1(t),\ldots, \gamma_\ell(t)\}<\ell\}$$
is finite, for it is contained in the small set $S_1\cup \cdots \cup S_\ell$. Thus, we may
assume that $\#\{\gamma_1(t),\ldots, \gamma_\ell(t)\}=\ell$,  for all $t$ in
$I$.

Since $W\setminus Z$ is dense in $W$, there is some $1\leq j\leq k$ such that the set
$$Z_j=\{t\in I\,\mid\, h_j(t)\notin \{\gamma_1(t),\ldots, \gamma_\ell(t)\}\}$$
is infinite (note that it could be $j=1$).  Lemma \ref{L:lemacurvasima} yields a special set
$D_1\supseteq D$ and  an $\LL_{D_1}$-definable curve $\alpha=(\alpha_1,\alpha_2)$  such
that the intersection of
$\Ima(\alpha)$ with the graph of $(h_j\circ\beta)\restr{Z_j}$ is infinite. Consider the
$\LL_{D_1}$-definable set
\begin{multline*}
\{ t \in W \,\mid\, t=\alpha_1(u) \text{ for some } u \in \mathrm{Dom}(\alpha) \ \& \
\alpha_2(u)\notin \{\gamma_1(t),\ldots, \gamma_\ell(t)\}\\
\& \ R(\beta \circ \alpha_1(u), \alpha_2(u))\neq 
R_0(\beta\circ\alpha_1(u),\alpha_2(u))
\},
\end{multline*}
which is clearly infinite because it contains the projection on the first coordinate of the
intersection of $\Ima(\alpha)$ with the graph of $(h_j\circ\beta)\restr{Z_j}$. Thus, it must
have non-empty interior, so find an $\LL_{D_1}$-definable open  subset of $W$
contained in $W\setminus Z$, which  contradicts
the fact that $Z$ is dense in $W$. Hence, the set $X_\ell$ is 
$\LL_A$-definable, as desired.

Now, consider the $\LL^P_A$-definable multi-function
$$H_\ell:X_\ell\dashrightarrow M:x\mapsto \{h_1(x),\ldots,h_k(x)\}=\{h_1(x),\ldots,h_\ell(x)\},$$
and let us show that $H_\ell$ is $\LL_A$-definable, by induction on $n$. 
Let $\beta:I\rightarrow X_\ell$ be an $\LL_D$-definable curve, with $D\supseteq 
A$ special. For each $i\leq k$, there is an $\LL_D$-definable function
$\gamma_i:I\rightarrow M^{n-1}$ which agrees with $h_i\circ \beta$ off a
small set $S_i$. As before, for all $t\in I$ and $i=1,\ldots,\ell$ we can assume that  $\gamma_i(t)\in Y_{\beta(t)}$ and $\text{card}\{\gamma_1(t),\ldots, \gamma_{\ell} (t)\}=\ell$. In particular,
$$H_\ell \circ \beta(t)=\{\gamma_1(t),\ldots, \gamma_{\ell}(t)\}$$
is $\LL_D$-definable. Thus,  we deduce that $H_\ell$ is $\LL_A$-definable, as 
required.

By (finite) Skolem functions for the graph of $H_\ell$, there are 
$\LL_A$-definable functions $\tilde{h}_1,\ldots, 
\tilde{h}_\ell:X_\ell\rightarrow M$ such that 
$H_\ell(x)=\{\tilde{h}_1(x),\ldots, \tilde{h}_\ell(x)\}$ for all $x\in X_\ell$. 
In particular, for each $1\leq j\leq \ell$, the multi-function 
$R(x,\tilde{h}_j(x))$ with domain
$X_\ell$ is $\LL_A$-definable: Indeed, given an $\LL_D$-definable curve
$\beta:I \rightarrow X_\ell$ over a special set $D\supseteq A$, the curve
$x\mapsto (\beta(x),\tilde{h}_j(\beta(x)))$ is $\LL_D$-definable. By hypothesis, so is the
multi-function $x\mapsto R(\beta(x),\tilde{h}_j(\beta(x)))$. By induction on 
$n$, we deduce that
$x\mapsto R(x,\tilde{h}_j(x))$ is
$\LL_A$-definable, as desired.

Using the following multi-function $\widehat{R}$ defined as
\[ \begin{array}{ccl}
\widehat{R}:M^{n} &\rightarrow & M \\

(x,y)& \dashrightarrow & \begin{cases}
R(x,\tilde{h}_j(x)), \text{ if } x\in X_\ell  \text{ and }
y=\tilde{h}_j(x) \text{, for some $1\leq j\leq k$,}\\
R_0(x,y) , \text{ otherwise}.
\end{cases}
\end{array}
\]

\noindent iterating the previous argument, we find a
multi-function $\widehat{R}_0:M^n\dashrightarrow M$ which is $\LL$-definable 
over $A$ such that the corresponding 
set
\[ \widehat{Y}_x:=\{b \in M: R(x,b)\neq \widehat{R}_0(x,b)\}\]
\noindent is again $\LL_{\dcl^P(A,x)}$-definable and finite for each $x$ in 
$M^{n-1}$. Moreover, we know that $\text{card}(\widetilde{Y}_x)\leq k$ for 
every $x\in M^{n-1}$. On the other hand, the least cardinality $\tilde{\ell}$ 
of a non-emtpy fiber $\widetilde{Y}_x$ for $x$ in $M^{n-1}$ is strictly greater 
than $\ell$. This process must therefore terminate after finitely many 
iterations, so  the 
multi-function $R$ is $\LL_A$-definable, as required.\end{proof}

We obtain an immediate consequence of the previous result, 
considering characteristic functions of $\LL^P$-definable sets.

\begin{cor}\label{C:charfct}
  An $\LL^P$-definable subset $X$ of $M^n$ over a special set $A$ is
  $\LL_A$-definable if and only if the set
  $\{t\in I: \alpha(t)\in X\}$ is $\LL_{D}$-definable, whenever
  $\alpha:I\rightarrow M^n$ is an $\LL$-definable curve over a special
  set $D\supseteq A$.
\end{cor}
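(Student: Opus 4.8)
The plan is to obtain the corollary as a direct specialization of Proposition \ref{P:curvfunc}, applied to the characteristic function $\1_X:M^n\to M$ of $X$. Every function is in particular a multi-function, its fibers being singletons, so $\1_X$ is an $\LL^P$-definable multi-function over $A$; and clearly $X$ is $\LL_A$-definable if and only if $\1_X$ is, since each is obtained from the other by a quantifier-free formula. So it suffices to verify that the right-hand condition in Proposition \ref{P:curvfunc} for $R=\1_X$ is precisely the hypothesis of the corollary.

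For this I would simply observe that, given a special $D\supseteq A$ and an $\LL_D$-definable curve $\alpha:I\to M^n$ with $I$ an open subset of $M$, the composite multi-function $\1_X\circ\alpha$ is nothing but the characteristic function, with $\LL_D$-definable domain $I$, of the set $\{t\in I:\alpha(t)\in X\}$. Hence $\1_X\circ\alpha$ is $\LL_D$-definable if and only if $\{t\in I:\alpha(t)\in X\}$ is. Assuming the hypothesis of the corollary, this holds for every such $D$ and $\alpha$, so Proposition \ref{P:curvfunc} applies and yields that $\1_X$, and therefore $X$, is $\LL_A$-definable. The reverse implication is immediate: if $X$ is $\LL_A$-definable, then $\{t\in I:\alpha(t)\in X\}$ is $\LL_D$-definable for any $\LL_D$-definable curve $\alpha:I\to M^n$ with $D\supseteq A$.

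I do not expect any genuine obstacle, as the mathematical content lies entirely in Proposition \ref{P:curvfunc}; the argument is a one-line translation through characteristic functions. The only point possibly worth a remark is the shape of the domains of the test curves: reading ``curve'' without insisting that the domain be open, one first splits $I$ by cell decomposition into finitely many pieces, each either finite---on which the claim is trivial---or projecting homeomorphically onto an open subset of $M$, which reduces matters to the case covered by Proposition \ref{P:curvfunc}.
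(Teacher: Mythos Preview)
Your proposal is correct and matches the paper's approach exactly: the paper states the corollary as an immediate consequence of Proposition~\ref{P:curvfunc} applied to the characteristic function of $X$, which is precisely what you do. Your closing remark on non-open domains is a harmless elaboration, since the paper uses ``curve'' consistently with open domain in both Proposition~\ref{P:curvfunc} and the corollary.
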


In particular, we conclude that the open core of the theory $T^P$ is tame, for it is
definable in the reduct $T$ (cf. \cite{DMS10, BH12, fP11, mT18}).
\begin{theorem}\label{T:opendef}
Let $(M,E)$ be a dense pair of models of a geometric theory $T$ of topological 
rings such that $T$ is dense for pairs, that is, every $\LL^P$-definably closed set is 
special. Every open
$\LL^P$-definable subset of $M^n$ over a special set $A$
  is $\LL_A$-definable. In particular, the topological closure
  $\text{cl}_M(X)$ of an $\LL_A^P$-definable set $X$ of $M^n$ is
  $\LL_A$-definable.
\end{theorem}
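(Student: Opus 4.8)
The plan is to reduce the statement to the one-dimensional case via Corollary~\ref{C:charfct} and then invoke the trichotomy in Fact~\ref{F:Dries}(\ref{F:open_1dim}). So fix an open $\LL^P_A$-definable set $X\subseteq M^n$ with $A$ special. By Corollary~\ref{C:charfct}, it suffices to prove that for every $\LL_D$-definable curve $\alpha\colon I\to M^n$ defined over a special set $D\supseteq A$, with $I$ an open subset of $M$, the set $\alpha^{-1}(X)=\{t\in I:\alpha(t)\in X\}$ is $\LL_D$-definable.

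First I would apply cell decomposition for $T$ over the special set $D$ to partition $I$ into finitely many $\LL_D$-definable cells $I_1,\dots,I_m$ on which the restriction of $\alpha$ is continuous. A cell in $M$ is either a singleton or an open subset of $M$; the singleton cells contribute only finitely many points to $\alpha^{-1}(X)$, which is harmless. On an open cell $I_j$, continuity of $\alpha\restr{I_j}$ gives that $\alpha^{-1}(X)\cap I_j=(\alpha\restr{I_j})^{-1}(X)$ is open in $I_j$, hence open in $M$; being moreover $\LL^P_D$-definable, it is $\LL_D$-definable, since for an open $\LL^P$-definable subset of $M$ the dense--codense part $W$ of Fact~\ref{F:Dries}(\ref{F:open_1dim}) is empty. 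Taking the union over the cells, $\alpha^{-1}(X)$ is $\LL_D$-definable, and Corollary~\ref{C:charfct} then yields that $X$ is $\LL_A$-definable.

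For the last assertion, I would use that $M$, and hence $M^n$ with the product topology, carries an $\LL$-definable basis of open sets given by (products of) the formula $\theta$, so that the interior operator maps $\LL^P_A$-definable sets to $\LL^P_A$-definable sets. Thus, for an $\LL^P_A$-definable set $X\subseteq M^n$, the set $\inte_M(M^n\setminus X)$ is open and $\LL^P_A$-definable, hence $\LL_A$-definable by the first part; its complement $\cl_M(X)$ is therefore $\LL_A$-definable.

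The only point requiring care is the continuity reduction: the curve $\alpha$ provided by Corollary~\ref{C:charfct} is merely $\LL$-definable and need not be globally continuous, so passing to a cell decomposition over $D$ — together with the observation that one-dimensional cells are either points or open sets — is essential. Beyond this bookkeeping there is no genuine obstacle, as the substance of the argument has already been absorbed into Proposition~\ref{P:curvfunc} and Corollary~\ref{C:charfct}.
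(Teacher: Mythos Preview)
Your proof is correct and follows essentially the same route as the paper: reduce via Corollary~\ref{C:charfct} to curves, arrange continuity, and then apply Fact~\ref{F:Dries}(\ref{F:open_1dim}) to the open one-variable preimage. Your version is simply more explicit where the paper writes ``without loss of generality, $\alpha$ is continuous'' (you spell out the cell decomposition over $D$) and you supply the short complement-of-interior argument for the closure claim that the paper leaves as an ``in particular''.
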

\begin{proof}

  Let $U$ be an open subset of $M^n$ which is $\LL^P$-definable over a
  special set $A$. By Corollary \ref{C:charfct}, it suffices to show
  that, given an $\LL$-definable curve $\alpha$ over
  a special set $D\supseteq A$, the set $\{t\in \mathrm{Dom}(\alpha) \,|\, \alpha(t)\in U\}$ is
  $\LL_{D}$-definable. Without loss of generality, we may assume that
  $\alpha$ is continuous. In particular, the preimage
\[ \alpha^{-1}(U)=\{t\in \mathrm{Dom}(\alpha) \,|\, \alpha(t)\in U\}\]
\noindent is an open $\LL_D^P$-definable subset of $M$, and thus
$\LL_D$-definable, by Fact $\ref{F:Dries}.(\ref{F:open_1dim})$.
\end{proof}

\section{Honesty and Dimension}\label{S:honesty}

By Fact $\ref{F:Dries}.(\ref{F:induced})$, any $\LL^P$-definable
subset $Y$ of $E^n$ is \emph{externally $\LL$-definable}: there is
some $\LL$-definable subset $Z$ in $M^n$, possibly with parameters
from $M$, such that $Y=Z\cap M$. The possible external definitions for
a given subset may be very different, as the following example shows:

\begin{example} In the dense pair $(\R,\Ra)$, consider the
  $\LL^P$-definable set \[Y=\{(x,y)\in \Ra^2:x=y=0\}.\] Set
  $Z=\{(x,y)\in \R^2:x=y=0\},$ which has dimension
  $0$. Clearly $Y=Z\cap \R^2_{\text{alg}}$. However, for
  $Z_1=\{(x,y)\in \R^2:x=e\cdot y\}$, which has dimension $1$, we
  still have that $Y=Z_1\cap \Ra^2$.
\end{example}

In a stable theory, externally definable subsets of any
predicate are actually definable using parameters from the predicate,
since $\varphi$-types are definable.  For a general NIP theory, we can
find external definitions using parameters in some elementary
extension of the pair \cite{CS13}.

\begin{fact}\label{F:BDO}\cite[Corollary 4.10]{BDO11}
If the geometric theory $T$ of topological rings has NIP, then so does the theory 
$T^P$ of dense pairs.
\end{fact}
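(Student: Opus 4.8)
The plan is to reduce the statement to the transfer of NIP along \emph{bounded} expansions by a predicate, in three steps, and then identify the one genuinely delicate point.

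\emph{Step 1: a bounded normal form.} The back-and-forth system that yields Fact~$\ref{F:Dries}$ in fact shows that $T^P$ is a \emph{bounded} expansion of $T$: modulo $T^P$, every $\LL^P$-formula is equivalent to an $\LL^P$-formula all of whose quantifiers range over bounded subsets of $M$ or over $P$; concretely, to a Boolean combination of $\LL$-formulas and formulas of the shape $\exists \bar u\,\bigl(\bigwedge_i P(u_i)\wedge\psi(\bar x,\bar y,\bar u)\bigr)$ with $\psi$ an $\LL$-formula (and finite iterates thereof), cf.\ \cite{vD98,BDO11}. Since a theory is NIP precisely when each of its formulas is, and since the class of NIP formulas for a fixed object/parameter partition is closed under Boolean combinations, it suffices to show that a formula of the displayed existential type is NIP in $T^P$; an induction on the number of nested bounded quantifiers lets us assume $\psi\in\LL$.

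\emph{Step 2: the induced structure on $E$ is NIP.} Since $E\preceq M$ as $\LL$-structures, every parameter from $M$ is external to $E$, so by Fact~$\ref{F:Dries}(\ref{F:induced})$ the $\LL^P$-structure induced on $E$ is a reduct of the Shelah expansion of the NIP structure $E$ by its externally $\LL$-definable sets, and hence is NIP by Shelah's theorem on Shelah expansions (equivalently, via the honest definitions of \cite{CS13}).

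\emph{Step 3: transfer.} Suppose toward a contradiction that $\phi(\bar x;\bar c)\equiv\exists \bar u\,(\bigwedge_i P(u_i)\wedge\psi(\bar x,\bar u;\bar c))$ has IP. Extract an $\LL$-indiscernible sequence $(\bar a_j)_{j<\omega}$ in $M^{|\bar x|}$ and parameters $(\bar c_S)$ with $\models\phi(\bar a_j;\bar c_S)$ iff $j\in S$. For $j\in S$ fix a witness $\bar e_{j,S}\in E$ with $\models\psi(\bar a_j,\bar e_{j,S};\bar c_S)$; for $j\notin S$ we have $\models\forall\bar u\in E\,\neg\psi(\bar a_j,\bar u;\bar c_S)$. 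Using saturation and density of the pair together with an honest definition for the $\LL$-formula $\psi$ over $E$ (Section~$\ref{S:honesty}$, \cite{CS13}), the ``no witness in $E$'' clauses turn into instances of a single $\LL$-formula evaluated on tuples of $E$; feeding this back, the alternation of $\phi$ along $(\bar a_j)$ is reflected in an alternation, along an $\LL$-indiscernible sequence, of an $\LL$-formula evaluated in $M$ or in the induced structure on $E$, contradicting Step~2 or the NIP of $T$. Combined with Remark~$\ref{R:small_fte}$, this yields the claim. The main obstacle is precisely this last conversion of the \emph{negative} instances — the bounded universal quantifier $\forall\bar u\in E\,\neg\psi$ — into a condition of bounded $\LL$-complexity on tuples of $E$; this is exactly what honest definitions are designed to supply, and the reason Section~$\ref{S:honesty}$ develops that machinery. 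A reader content with a black box may instead simply invoke the general theorem that a bounded expansion of an NIP structure whose induced structure on the predicate is NIP is again NIP.
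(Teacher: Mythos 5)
The paper offers no proof of this statement at all: it is quoted as a black box from \cite[Corollary 4.10]{BDO11}, whose original argument runs through the back-and-forth analysis and the description of definable sets in the pair. Your route is therefore genuinely different: you verify the hypotheses of the Chernikov--Simon criterion for dependent pairs --- boundedness of $T^P$ plus NIP of the structure induced on the predicate --- and this does work. Boundedness is exactly Fact \ref{F:quantifier}, and closure of NIP formulas under Boolean combinations reduces everything to formulas $\exists \bar u\,(\bigwedge_i P(u_i)\wedge\psi)$ with $\psi\in\LL$, as you say (no induction on nested bounded quantifiers is even needed, since $\psi$ is already an $\LL$-formula). For Step 2, note that Fact \ref{F:Dries}(\ref{F:induced}) is stated over a \emph{special} parameter set, so you should observe that $M$ itself is special (trivially $M\ind_E E$), whence every $\LL^P$-definable subset of $E^n$ with arbitrary parameters from $M$ is a trace of an $\LL_M$-definable set; the induced structure on $E$ is then a reduct of the Shelah expansion of the NIP model $E\models T$ and hence NIP. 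What your approach buys is a modular, citable argument that bypasses the fine structure theory of the pair (in particular it does not lean on \cite[Theorem 4.9]{BDO11}, the statement this paper's Appendix has to repair); what the original approach buys is self-containedness within the dense-pair machinery.

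The one caveat is your hands-on Step 3: as written it is not a proof. You mix the shattering and alternation formulations of IP, you do not say over which language or parameters the indiscernible sequence is taken (for $T^P$ one must handle $\LL^P$-indiscernibles), and the conversion of the witnesses $\bar e_{j,S}$ and of the negative clauses $\forall\bar u\in E\,\neg\psi$ into a single alternating instance of an $\LL$-formula on $E$ is precisely the content of the Chernikov--Simon argument, not a routine manipulation. You correctly identify this as the delicate point, and the ``black box'' you invoke at the end is a genuine theorem whose hypotheses you have checked, so the proposal stands --- but only in its black-box form; the sketched direct argument would need to be replaced by the actual proof of that proposition.
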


\medskip
{\bf Henceforth, we work inside a sufficiently saturated dense pair of models 
	$(M,E)$ of a geometric NIP theory $T$ of topological rings}.

\begin{fact}\cite[Theorem 3.13 \& Proposition 3.23]{S15}\label{F:honest}
  Let $M$ be a model of $T^P$ with $E=P(M)$, and
  $\varphi(E,b)\subseteq E^n$ be an externally definable set, with
  $\varphi(x,y)$ an $\LL$-formula and $b$ in $M$. In some
  elementary extension $M\preceq^P N$, there is an \emph{honest definition} 
  $\chi(x)$ of
  $\varphi(E,b)$, that is, an $\LL$-formula $\chi(x)$
  with parameters in $P(N)$ such that
$$\varphi(E,b)\subseteq \chi(P(N))\subseteq \varphi(P(N),b).$$

\noindent We say that the set  $\chi(P(N))$ is an \emph{honest definition} of 
$\varphi(E,b)$.
\end{fact}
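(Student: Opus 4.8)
We indicate how one would prove Fact~\ref{F:honest}, following the method of Chernikov and Simon \cite{CS13, S15}. The plan is to reformulate the existence of an honest definition as the consistency of an explicit partial $\LL^P$-type over $E\cup\{b\}$, and then to reduce that consistency to a finitary statement about the $\LL$-formula $\varphi$ which is exactly a consequence of NIP.

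First I would pass to an $(|E|+|b|)^{+}$-saturated elementary extension $(N,P(N))\succeq^{P}(M,E)$ of the pair. As $\LL\subseteq\LL^{P}$, this gives $M\preceq N$ as $\LL$-structures and $E\subseteq P(N)\subseteq N$; since $\varphi$ is an $\LL$-formula and $b\in M\subseteq N$, we get $\varphi(E,b)=\varphi(P(N),b)\cap E^{n}$. Next, for an $\LL$-formula $\chi_{0}(x,z)$ — to be chosen in the last step, and possibly built as a Boolean combination of instances of $\varphi$ — I would consider the partial $\LL^{P}$-type
\[
\Sigma(z)\ :=\ \{\,P(z)\,\}\cup\{\,\forall x\,(\chi_{0}(x,z)\to\varphi(x,b))\,\}\cup\{\,\chi_{0}(a,z)\ :\ a\in\varphi(E,b)\,\},
\]
whose parameters lie in $E\cup\{b\}$. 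If $\Sigma(z)$ is consistent, then by saturation of $(N,P(N))$ it is realised by some $d\in P(N)$; the formulas $\chi_{0}(a,z)$ force $\varphi(E,b)\subseteq\chi_{0}(N,d)$, hence $\varphi(E,b)\subseteq\chi_{0}(P(N),d)$ because $\varphi(E,b)\subseteq E^{n}\subseteq P(N)^{n}$, while the sentence $\forall x\,(\chi_{0}(x,d)\to\varphi(x,b))$ forces $\chi_{0}(P(N),d)\subseteq\varphi(P(N),b)$. Thus $\chi(x):=\chi_{0}(x,d)$ is the desired honest definition.

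It then remains to choose $\chi_{0}$ so that $\Sigma(z)$ is consistent. By compactness it suffices that every finite fragment of $\Sigma$ be realised, and, using that $(M,E)$ is an elementary submodel of a monster model of $T^{P}$, a fragment mentioning $a_{1},\dots,a_{k}\in\varphi(E,b)$ is realised exactly when there is $d_{0}\in M$ with $P(d_{0})$, with $\chi_{0}(a_{i},d_{0})$ for all $i$, and with $\chi_{0}(M,d_{0})\subseteq\varphi(M,b)$; since the $a_{i}$ already lie in $E$, this says $\{a_{1},\dots,a_{k}\}\subseteq\chi_{0}(M,d_{0})\subseteq\varphi(M,b)$. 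So the whole argument reduces to the following purely combinatorial statement about the $\LL$-formula $\varphi$ in $T$: there is an $\LL$-formula $\chi_{0}(x,z)$ such that for every $b$ in a model of $T$ and every finite $A_{0}\subseteq\varphi(M,b)$ there is a parameter $d_{0}$ with
\[
A_{0}\ \subseteq\ \chi_{0}(M,d_{0})\ \subseteq\ \varphi(M,b);
\]
equivalently, every instance of $\varphi$ is, in a uniform way, a union of $\chi_{0}$-definable subsets of itself.

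The hard part will be exactly this last statement, which is the combinatorial heart of the Chernikov--Simon honest-definition theorem and the only place the NIP hypothesis on $T$ is used: since $\varphi(x,y)$ is then a dependent formula, it has finite VC-dimension, and one produces the inner approximation $\chi_{0}$ by VC-theoretic arguments, with $\chi_{0}$ built from a number of instances of $\varphi$ and $\neg\varphi$ bounded in terms of the VC-dimension, as in \cite{CS13}. Granting this, every finite fragment of $\Sigma(z)$ is satisfiable, so $\Sigma(z)$ is consistent, and the construction above yields the honest definition $\chi$, completing the proof.
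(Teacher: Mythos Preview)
The paper does not give its own proof of this fact; it is simply cited from Simon's book \cite{S15}, so there is no ``paper's proof'' to compare against. Your sketch follows the Chernikov--Simon strategy and is correct in outline, but there is one genuine gap in the reduction step.

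You correctly observe that a finite fragment of $\Sigma(z)$ is realised in $(M,E)$ precisely when there exists $d_0$ with $P(d_0)$, i.e.\ $d_0\in E$, such that $\{a_1,\dots,a_k\}\subseteq\chi_0(M,d_0)\subseteq\varphi(M,b)$. But in the very next sentence you pass to a ``purely combinatorial statement about $\varphi$ in $T$'' in which the requirement $d_0\in E$ has silently disappeared: you only ask for \emph{some} parameter $d_0$. Since $b\notin E$ in general, one cannot simply invoke $E\preceq M$ to push $d_0$ down into $E$, so as stated the reduction does not go through. The fix is to use the sharper form of the NIP combinatorics (uniform definability of types over finite sets): the formula $\chi_0(x,z)$ produced by the VC/$(p,q)$-argument can be taken so that the parameter $d_0$ is a tuple of elements of the finite set $A_0$ itself. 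Since here $A_0\subseteq\varphi(E,b)\subseteq E^n$, this forces $d_0\in E$, and the requirement $P(d_0)$ is met. You should make this point explicit, since it is exactly where the predicate structure and the $\LL$-combinatorics interact.
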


\begin{remark}\label{r:proj} Notice that
$\chi(E)=\varphi(E,b)$, so $\chi(x)$ is indeed an external
definition. Moreover,
$$\chi(P(N))\subseteq \psi(P(N)), \text{ for any $\LL_E$-formula
  $\psi(x)$ with $\varphi(E,b)\subseteq \psi(x)$}.$$
\noindent
The honest definition does not depend on the formula
$\varphi(x,y)$ but solely on the set $\varphi(E,b)$. Furthermore, if $Z$ is an 
honest definition of the externally definable subset
$X$ of $ E^{n+m}$ and $\pi: E^{n+m}\to E^n$ is
projection onto $m$ many coordinates, then $\pi(Z)$ is an honest definition of 
$\pi(X)$ 
\end{remark}

We will define the dimension of an externally definable
subset of $E^n$ in terms of the dimension of some honest
definition. For that, we need the following auxiliary lemma. Given
a subset $C\subseteq M$, we denote by a
\emph{$C$-$n$-ball}  the  set
$$B^C(\bar a)=\theta(C,\bar a_1)\times \cdots\times \theta(C,\bar a_n), \text{ 
for some tuple } \bar{a}=(\bar a_1,\ldots,\bar a_n) \text{ in } C.$$

\begin{lemma}\label{L:ball}
  Let $N$ be an $\LL$-elementary extension of a model $M$ of $T$ and
  $X \subseteq N^n$ be an $\LL_N$-definable set containing some $M$-$n$-ball
  $B^M(\bar a)$.  The topological dimension $\dim(X)$ of $X$ equals
  $n$.
\end{lemma}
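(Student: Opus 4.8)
The statement to prove is that if an $\LL_N$-definable set $X\subseteq N^n$ contains an $M$-$n$-ball $B^M(\bar a)=\theta(N,\bar a_1)\times\cdots\times\theta(N,\bar a_n)$ with each $\bar a_i$ a tuple from $M$, then $\dim(X)=n$. Since obviously $\dim(X)\le n$, the task is the lower bound $\dim(X)\ge n$. The natural strategy is to produce a single point $b=(b_1,\dots,b_n)\in X$ whose coordinates are $\LL$-independent over $N$ (equivalently, over the parameters defining $X$), i.e. $\dim(b_1,\dots,b_n/N)=n$; this forces $\dim(X)=n$ by the definition of the geometric dimension and the fact (cited in the excerpt, \cite[Theorem 9.5]{lM95}) that it agrees with the topological dimension.

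**The key steps.** First I would reduce to $n=1$: the product structure of the $M$-$n$-ball lets one build the point coordinate by coordinate, but one must be careful to keep independence, so really the argument is a single claim applied iteratively. The core claim is: \emph{for each nonempty basic open set $\theta(M,\bar a_i)$ defined over $M$, and any finite parameter set $C\supseteq M$ in $N$ (or in a further extension), there is an element $c\in\theta(N,\bar a_i)$ with $\dim(c/C)=1$.} Granting this, one picks $b_1\in\theta(N,\bar a_1)$ generic over (parameters of $X$)$\,\cup M$, then $b_2\in\theta(N,\bar a_2)$ generic over that set together with $b_1$, and so on; at the end $\dim(b_1,\dots,b_n/N)=n$ and $(b_1,\dots,b_n)\in B^M(\bar a)\subseteq X$, giving $\dim(X)=n$. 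To prove the core claim: $\theta(M,\bar a_i)$ is a nonempty definable subset of $M$ defined over $M$, and by the density axiom in Definition~\ref{D:geomstr} together with $T_1$-ness it has no isolated points, hence it is infinite; by the geometric axiom ``every definable unary set is finite or has nonempty interior'' it has nonempty interior in $M$, hence $\theta(N,\bar a_i)$ has nonempty interior in $N$. An infinite $\LL$-definable subset of $N$ with parameters over $M$ cannot be algebraic over any $C$ with $\acl(C)$ of cardinality less than $|N|$ in a sufficiently saturated $N$; more precisely, realize in $N$ (taken $|C|^+$-saturated) an element of $\theta(N,\bar a_i)$ avoiding the finitely-presented algebraic locus $\acl(C)\cap\theta(N,\bar a_i)$ — possible since $\theta(N,\bar a_i)$ is infinite and $\exists^\infty$ is eliminated — yielding $c\notin\acl(C)$, i.e. $\dim(c/C)=1$.

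**Main obstacle.** The only delicate point is the saturation/cardinality bookkeeping: ``generic over $N$'' requires $N$ to be sufficiently saturated relative to $|N|$, which is the standing monster-model convention but should be acknowledged; alternatively one phrases everything inside a fixed large saturated ambient model so that ``$\dim(c/C)=1$'' for $C$ of bounded size is available. A secondary technical care is that the basic open sets $\theta(M,\bar a_i)$ are defined over parameters \emph{in $M$}, so when we pass to $N$ they remain infinite with interior (interior being preserved under elementary extension for $\LL$-formulas); this is why the $M$-$n$-ball hypothesis, rather than an arbitrary $N$-$n$-ball, is exactly what makes the coordinates freely choosable over $N$. Once these are in place the proof is a routine induction; I do not expect genuine difficulty beyond stating the saturation hypothesis cleanly.
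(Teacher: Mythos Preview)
There is a genuine gap. You claim that the point $(b_1,\dots,b_n)$ you construct lies in $B^M(\bar a)$, but by definition $B^M(\bar a)=\theta(M,\bar a_1)\times\cdots\times\theta(M,\bar a_n)\subseteq M^n$ consists of tuples whose coordinates are \emph{in $M$}. Since you choose each $b_i\in\theta(N,\bar a_i)$ generic over a set containing all of $M$, you force $b_i\notin\acl(M)=M$, so $(b_1,\dots,b_n)\notin M^n$ and hence $(b_1,\dots,b_n)\notin B^M(\bar a)$. Your point lies only in the larger $N$-ball $B^N(\bar a)$, and the hypothesis says nothing about $B^N(\bar a)\subseteq X$; indeed this inclusion can fail (e.g.\ take $M=\R$, $N\succ M$ with an infinitesimal $d>0$, $\theta(M,\bar a)=(0,1)_M$, and $X=(d,1)_N$). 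The remark in your last paragraph that ``the $M$-$n$-ball hypothesis \dots\ is exactly what makes the coordinates freely choosable over $N$'' gets the role of $M$ backwards: the parameters $\bar a_i$ being in $M$ is irrelevant here; what matters is that only the $M$-\emph{points} of the ball are known to lie in $X$.

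The paper's proof handles this by an induction that never leaves $X$: writing $\pi:N^{n+1}\to N^n$ for projection onto the first $n$ coordinates, one sets $Y=\{b'\in\pi(X):\pi^{-1}(b')\cap X\text{ is infinite}\}$ and observes that $B^M(\bar a')\subseteq Y$ because each fiber over a point of $B^M(\bar a')$ contains the infinite set $\theta(M,\bar a_{n+1})$. Induction gives $\dim(Y)=n$, one then picks a generic point of $Y$ over the (finite) parameter set $D$ of $X$, and finally a generic point of the infinite fiber. Your direct approach can be repaired, but only by choosing the $b_i$ \emph{inside} $\theta(M,\bar a_i)\subseteq M$ and generic merely over the finite set $D\subseteq N$ (not over $M$); this uses that $M$ is sufficiently saturated so that the infinite set $\theta(M,\bar a_i)$ is not swallowed by $\acl(D,b_1,\dots,b_{i-1})$. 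Either way, the phrase ``$\dim(b_1,\dots,b_n/N)=n$'' cannot be what you want, since any tuple from $N$ has dimension $0$ over $N$.
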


\begin{proof}

  Since $\dim(X)\leq n=\dim(N^n)$, we need only show $\dim(X)\geq n$, by induction on
  $n$. For $n=1$, if $\dim(X)$ were $0$,
  then $X$ would be finite, which contradicts the fact that, for a basic open 
  set defined over $M$, the set  $\theta(M, \bar a)$ is infinite.

 Assume now that $X$ is an $\LL_D$-definable subset of $N^{n+1}$, for some 
 $D\subseteq N$, and let
  $\pi:N^{n+1}\to N^n$ be
  the projection onto the first $n$ coordinates. Consider the definable set
  $$Y:=\{(b_1,\ldots,b_n)\in \pi(X): \pi^{-1}(b_1,\ldots,b_n)\cap X \text{ is infinite} \}.$$
  Denoting by $\bar a = 
  (\bar a_1,\ldots,\bar a_{n+1})$ and $\bar a'=(\bar a_1,\ldots,\bar a_{n})$, 
  we have that $B^M(\bar a')=\pi(B^M(\bar a))\subseteq Y$. 
  By induction $\dim(Y)=n$, so there is $(b_1,\ldots,b_n)$ in $Y$ such that 
  $\dim(b_1,\ldots,b_n|D)=n$. Since $\pi^{-1}(b_1,\ldots,b_n)\cap X$ is 
  infinite, there is $b_{n+1}$ in $N$ such that 
  $\dim(b_{n+1}|b_1,\ldots,b_n,D)=1$ and $(b_1,\ldots,b_n,b_{n+1})$ belongs to 
  $X$, as  
  required.
\end{proof}

\begin{cor}\label{L:good_dim}
  The  topological dimension of any two honest definitions of an
  $\LL^P$-definable set $Y\subseteq E^n$ coincide.
\end{cor}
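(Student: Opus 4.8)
The plan is to reduce the statement to Lemma \ref{L:ball} by comparing any two honest definitions of $Y$ inside a common elementary extension. Let $Z_1=\chi_1(P(N_1))$ and $Z_2=\chi_2(P(N_2))$ be two honest definitions of $Y$, living in possibly different elementary extensions $M\preceq^P N_1$ and $M\preceq^P N_2$. First I would pass to a single elementary extension $N$ of the pair containing (copies of) both $N_1$ and $N_2$; since the formulas $\chi_1,\chi_2$ are $\LL$-formulas with parameters in $P(N)$, both $\chi_1(P(N))$ and $\chi_2(P(N))$ make sense there, and the defining inclusions $Y\subseteq \chi_i(P(N'))\subseteq \varphi(P(N'),b)$ persist. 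So it suffices to prove the statement for two honest definitions $\chi_1(P(N)),\chi_2(P(N))$ computed in the same $N$. Moreover, by the symmetry of the situation, it is enough to show $\dim(\chi_1(P(N)))\le \dim(\chi_2(P(N)))$.

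The key point is that $P(N)$ is itself a dense elementary substructure of $N$, hence dense in $N^n$ with respect to the product topology, and in particular every basic $P(N)$-$n$-ball $B^{P(N)}(\bar a)$ is nonempty whenever $\bar a\in P(N)$; conversely, since $P(N)\preceq N$ and the density conditions in Definition \ref{D:geomstr} are first-order, such balls exist inside any nonempty $\LL_{P(N)}$-definable open subset of $P(N)^n$. The strategy is then: take a generic point $\bar c$ of $\chi_1(P(N))$ over the relevant parameters realised in some further elementary extension, with $\dim(\bar c)=\dim(\chi_1(P(N)))=:d$; using cell decomposition in $T$, $\chi_1(P(N))$ contains — after permuting coordinates and projecting — a relatively open subset of $P(N)^d\times\{\text{point}\}$, so one finds a $P(N)$-$d$-ball $B$ with $B\subseteq \pi(\chi_1(P(N)))$ for the appropriate coordinate projection $\pi$. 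Since $\chi_1(P(N))\subseteq \chi_2(P(N))$ — this containment follows from Remark \ref{r:proj}, because $Y\subseteq\chi_1(P(N))$ already gives $\chi_1(E)\subseteq\varphi(E,b)$ and the honest definition $\chi_2(P(N))$ is contained in every $\LL_E$-set containing $\varphi(E,b)$, wait, more directly: both are squeezed between $Y$ and $\varphi(P(N),b)$, and the honest definition in $N$ sits below $\varphi(P(N),b)$ while containing $\varphi(E,b)$; the required containment $\chi_1(P(N))\subseteq\chi_2(P(N))$ holds because $\chi_2(P(N))$ contains the $\LL_E$-definable superset... — here I would instead argue directly: both $Z_1:=\chi_1(P(N))$ and $Z_2:=\chi_2(P(N))$ satisfy $Z_i\cap E^n = Y$ and $Z_i\subseteq\varphi(P(N),b)$, and by the defining property of honest definitions (the ``moreover'' clause of Fact \ref{F:honest} and Remark \ref{r:proj}) one has $Z_1\subseteq\psi(P(N))$ for every $\LL_E$-formula $\psi$ with $Y\subseteq\psi(E)$; applying this is awkward since $\chi_2$ is not over $E$. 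I would therefore prefer the cleaner route below.

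The cleaner route: work with a single $N$ and note that for any honest definition $Z=\chi(P(N))$ of $Y$, one has $Z\cap N^n \supseteq$ a set whose trace on $P(N)^n$ equals a set squeezed between $Y$ and $\varphi(P(N),b)$; crucially, $\dim_{P(N)}(Z)$ — the dimension of $Z$ as a definable subset of the structure $P(N)\models T$ — equals its dimension $\dim_N(\tilde\chi)$ as a subset of $N^n$, where $\tilde\chi$ is the $\LL_N$-set defined by the same formula, because $P(N)\preceq N$ and dimension is definable (Mathews \cite{lM95}, cf. the discussion after Definition \ref{D:geomstr}) hence preserved. Now apply Lemma \ref{L:ball} with $M:=P(N)$ and the ambient model $N$: if $\dim_{P(N)}(Z_1)=d$, cell decomposition in $T$ applied inside $P(N)$ produces a $P(N)$-$d$-ball $B$ and a coordinate projection onto which $Z_1$ restricts homeomorphically to an open set containing $B$; since $Z_1\cap E^n=Y=Z_2\cap E^n$ and, by density of $E$ in $P(N)$ together with $Z_1\subseteq\varphi(P(N),b)$ and $Y\subseteq Z_2$, the closure relations force $\pi(Z_2)$ to contain $B$ as well (the trace of an open set containing $B$ on the dense set $E$ forces the corresponding open set in $Z_2$ to contain $B$, as $Z_2$ externally defines the same $Y$); then Lemma \ref{L:ball}, applied to the $\LL_N$-set externally defining $\pi(Z_2)$, yields $\dim(\pi(Z_2))\ge d$, whence $\dim(Z_2)\ge d=\dim(Z_1)$. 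By symmetry, equality follows. The main obstacle is precisely the middle step — justifying that the $d$-ball witnessing the dimension of $Z_1$ on a coordinate projection is also contained in the corresponding projection of $Z_2$; this requires combining the shared trace $Z_1\cap E^n=Z_2\cap E^n=Y$, the outer bound $Z_i\subseteq\varphi(P(N),b)$, density of $E$ in $P(N)^n$, and the fact (Lemma \ref{L:ball}) that a definable set containing a ball from a dense elementary submodel is full-dimensional, so that openness of $\pi(Z_1)$ near $B$ propagates to $\pi(Z_2)$ through the common dense trace.
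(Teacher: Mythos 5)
You have correctly isolated the two ingredients (reduce to a full-dimensional coordinate projection via Remark \ref{r:proj}, then invoke Lemma \ref{L:ball}), and you have also correctly identified where the difficulty lies; but your proposal does not close that difficulty, and the mechanism you suggest for closing it does not work. The ball $B$ witnessing $\dim(Z_1)=d$ is a $P(N)$-$d$-ball, i.e.\ its parameters live in $P(N_1)$ (or your amalgam $P(N)$), and $E$ is \emph{not} dense in $P(N)$ --- it is only dense in $M$. So the trace $B\cap E^d$ may well be empty, and the ``common dense trace $Z_1\cap E^n=Z_2\cap E^n=Y$'' gives you no purchase on $B$ at all: $Z_2$ is only constrained to contain $Y$ and to sit inside $\varphi(P(N_2),b)$, and there is no reason for $\pi(Z_2)$ to contain a ball whose parameters have nothing to do with $E$. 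Your own text flags this step as ``the main obstacle'' and then restates the desired conclusion rather than proving it.

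The missing idea is an elementarity transfer \emph{down to $M$}, not an amalgamation up to a common $N$. Since $B^{P(N_1)}(\bar a)\subseteq\chi_1(P(N_1),d_1)\subseteq\varphi(P(N_1),b)$ and (by the density condition in Definition \ref{D:geomstr}) the parameters $\bar a$ may be taken in $P(N_1)$, the statement ``there exist $\bar z_1,\dots,\bar z_n$ in $P$ such that the $P$-ball they define is contained in $\varphi(\cdot,b)$'' is an $\LL^P$-sentence over $b\in M$, true in $N_1$, hence true in $M$. This produces an $E$-$n$-ball contained in $\varphi(E,b)=Y$. Now $Y\subseteq\chi_2(P(N_2),d_2)=Z_2$, and $E=P(M)\preceq P(N_2)$, so Lemma \ref{L:ball} (applied with $M:=E$, $N:=P(N_2)$) gives $\dim(Z_2)=n$. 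This route makes the questionable containments $\chi_1(P(N))\subseteq\chi_2(P(N))$ and the amalgamation step unnecessary: the comparison between the two honest definitions happens entirely through the common trace $Y$ over $E$, once the witnessing ball has been pulled down into $E$ by elementarity of the pair.
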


\begin{proof}
  Choose two $\LL^P$-elementary extensions $N_1$ and $N_2$ of $M$ such
  that $Z_1=\chi_1(P(N_1),d_1)$ and $Z_2=\chi_2(P(N_2),d_2)$ are
  both honest definitions of $Y=\varphi(E,b)$, with $b$ in $M$ and
  $d_i$ in $P(N_i)$, for $i=1,2$.

 By Remark \ref{r:proj}, we may assume that $\dim(Z_1)=n$.
  There exists a $P(N_1)$-$n$-ball
\[\theta(P(N_1),\bar a_1)\times\cdots\times \theta(P(N_1),\bar a_n)\]

\noindent   contained in
  $\chi_1(P(N_1),d_1)\subseteq \varphi(P(N_1),b)$. The density condition in 
  Definition \ref{D:geomstr} implies that we may assume that the tuples $\bar 
  a_1,\ldots,  \bar a_n$ belong to $P(N_1)$.
  Hence,
  \[ N_1\models \exists \bar z_1  \in P \ldots \exists  \bar z_n \in P
    \ \forall x_1 \in P \ldots \forall x_n \in P  \left(\bigwedge\limits_{i=1}^n \theta(x_i,\bar z_i)
    \Longrightarrow \varphi(\bar x,b) \right). \]

\noindent  Thus, the model $M$ must also satisfy the above formula.
Choose a $P(M)$-$n$-ball contained in
  $\varphi(E,b)\subseteq \chi_2(P(N_2), d_2)$. Since
  $E=P(M)\preceq P(N_2)$, we conclude that $\dim(Z_2)=n$, by Lemma \ref{L:ball}.
\end{proof}

\begin{definition}\label{D:dim_hon}
  The \emph{dimension} $\dim(Y)$ of an $\LL^P$-definable subset $Y$ of
  $E^n$ is the topological dimension of some (or equivalently, any) honest 
  definition of $Y$.

 If $Y$ is $\LL^P$-definable over $A$, a point $b$ in $Y$
  is \emph{generic} over $A$ if its geometric dimension $\dim(b/A)$ equals $\dim(Y)$.
\end{definition}
 For an $\LL^P$-definable subset $Y$ of $E^n$, notice that $Y\times Y$
has dimension $2\dim(Y)$, since $Z\times Z$ is an honest definition of
$Y\times Y$, whenever $Z$ is an honest definition of $Y$. If
$Y\subseteq E^n$ is $\LL$-definable over $E$, then $Y$ is an honest
definition of itself, so its dimension as defined above agrees with
its topological dimension.

\medskip
We now give some equivalent (geometric) characterizations of the
introduced dimension. We first compare it with the weakly o-minimal dimension from 
\cite{MMS}.

\begin{lemma}\label{L:proj_dim_coordinates}

  Let $Y$ be an $\LL^P$-definable subset of $E^n$. Then $\dim(Y)$ is
  the largest integer $k$ such that the subset $\pi(Y)$ of $E^k$ has
  non-empty interior, for some projection $\pi:E^n \rightarrow E^k$
  onto $k$ coordinates.
\end{lemma}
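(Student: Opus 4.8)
The plan is to prove both inequalities between $\dim(Y)$ and the quantity $k^*:=\max\{k : \pi(Y)\subseteq E^k \text{ has non-empty interior for some coordinate projection } \pi\}$. We work with a fixed honest definition $Z=\chi(P(N))\subseteq N^n$ of $Y$ inside some $\LL^P$-elementary extension $N$, so that by Definition \ref{D:dim_hon} we have $\dim(Y)=\dim(Z)$, and by Remark \ref{r:proj} the projection $\pi(Z)$ is an honest definition of $\pi(Y)$ for any coordinate projection $\pi$; hence $\dim(\pi(Y))=\dim(\pi(Z))$ as well.

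First I would show $k^*\leq \dim(Y)$. Suppose $\pi:E^n\to E^k$ is a coordinate projection with $\pi(Y)$ of non-empty interior, so $\pi(Y)$ contains some $E$-$k$-ball $B^E(\bar a)=\theta(E,\bar a_1)\times\cdots\times\theta(E,\bar a_k)$ with $\bar a$ in $E$, using the density condition in Definition \ref{D:geomstr} to take the parameters $\bar a_i$ inside $E=P(M)$. Since $\pi(Z)$ is an honest definition of $\pi(Y)$, by definition $\pi(Y)\subseteq \pi(Z)$ when the latter is evaluated at $P(N)$-points, and more precisely the $\LL$-formula witnessing the $E$-$k$-ball inside $\pi(Y)$ transfers: the same $\exists\exists\cdots\forall\forall\cdots$ statement that says ``there is a $P$-$k$-ball contained in the projection'' holds in $M$, hence in $N$, placing a $P(N)$-$k$-ball inside $\pi(Z)$. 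By Lemma \ref{L:ball} applied in the $\LL$-extension $P(N)\preceq N$ (or rather to $\pi(Z)$ as an $\LL_N$-definable set containing a $P(N)$-$k$-ball, noting $P(N)$ is an $\LL$-elementary submodel of $N$), we get $\dim(\pi(Z))=k$, so $\dim(Y)=\dim(Z)\geq\dim(\pi(Z))=k$.

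Next I would show $\dim(Y)\leq k^*$, which is the more substantial direction. Let $k=\dim(Z)$. Since $Z$ is $\LL_N$-definable and $T$ is a geometric theory of topological rings — in particular its models have cell decomposition (Definition \ref{D:geomstr}) — $Z$ can be partitioned into finitely many cells, one of which, say $Z_0$, is again $k$-dimensional. By the definition of a cell (Definition \ref{D:CDP}), there is a coordinate projection $\pi:N^n\to N^k$ such that $\pi\restr{Z_0}$ is a homeomorphism onto an open subset $\pi(Z_0)$ of $N^k$; in particular $\pi(Z_0)$ has non-empty interior in $N^k$, hence contains an $N$-$k$-ball, and shrinking if necessary and again invoking the density condition of Definition \ref{D:geomstr} — here I would need the version of the density condition over the relevant parameter set — we may find such a ball with parameters realizing an appropriate type. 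The delicate point is to push this open set down to the $E$-points: we want $\pi(Y)$ to have non-empty interior in $E^k$. Here I would use that $Z$ is an honest definition, so $Y=\chi(E)$ and $Y\subseteq \chi(P(N))=Z$; thus $\pi(Y)\subseteq \pi(Z)$. Using Remark \ref{r:proj} that $\pi(Z)$ is itself an honest definition of $\pi(Y)$, the inclusion $\pi(Y)\subseteq \pi(Z)$ together with $\dim(\pi(Z))=\dim(\pi(Z_0))=k$ and the honest-definition sandwich $\pi(Y)\subseteq\pi(Z)\subseteq\pi(\varphi(P(N),b))$ forces, by the same first-order transfer argument as in Corollary \ref{L:good_dim}, an $E$-$k$-ball inside $\pi(Y)$: the statement ``there is a $P$-$k$-ball inside $\pi(\varphi(\cdot,b))$'' is first-order and holds in $N$ (because $\pi(Z)$ contains a $P(N)$-$k$-ball), hence holds in $M$, giving an $E$-$k$-ball inside $\pi(\varphi(E,b))=\pi(Y)$. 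Therefore $\pi(Y)$ has non-empty interior and $k^*\geq k=\dim(Y)$.

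Combining the two inequalities gives $\dim(Y)=k^*$. The main obstacle, as indicated, is the second direction: extracting from the abstract cell-decomposition an honest witness that a coordinate projection of $Z$ contains a ball \emph{with $P$-points}, which requires carefully threading the density condition of Definition \ref{D:geomstr} so that the relevant existential statement about $P$-balls is first-order over parameters available in $M$, exactly mirroring the transfer trick used in the proof of Corollary \ref{L:good_dim}. Once that is set up, Lemma \ref{L:ball} does the dimension bookkeeping on both sides.
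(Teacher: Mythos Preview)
Your approach is essentially the paper's: use the containment $\pi(Y)\subseteq\pi(Z)$ together with Lemma~\ref{L:ball} for $k^*\le\dim(Y)$, and first-order transfer along $M\preceq^P N$ for $\dim(Y)\le k^*$. One slip in the first direction: your transfer of the ``there is a $P$-$k$-ball'' statement only lands a $P(N)$-$k$-ball inside $\pi(\varphi(P(N),b))$, not inside $\pi(Z)$, since the honest-definition sandwich goes $\pi(Z)\subseteq\pi(\varphi(P(N),b))$ and not the other way. You don't need that transfer at all: you already noted $\pi(Y)\subseteq\pi(Z)$, so the $E$-$k$-ball lies directly in $\pi(Z)$, and Lemma~\ref{L:ball} applied to the pair $E\preceq P(N)$ (rather than $P(N)\preceq N$) gives $\dim(\pi(Z))=k$ straight away---this is exactly the paper's argument. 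For the second direction the paper bypasses cell decomposition and just uses that a $k$-dimensional $\LL$-definable set has some coordinate projection with non-empty interior; your detour through cells is correct but unnecessary.
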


\begin{proof}

  In some $\LL^P$-elementary extension $N$ of $M$, let
  $Z=\chi(P(N),d)$ be an honest definition of $Y=\varphi(E,b)$, with
  $b$ in $M$ and $d$ in $P(N)$. Let $k\leq n$ be the largest integer
  such that for some projection  $\pi:E^n \rightarrow E^k$, the set  $\pi(Y)$ 
  has non-empty interior in $E^k$. By Fact \ref{F:honest}, the set $\pi(Z)$ is
  an honest definition of $\pi(Y)$. The latter contains an $E$-$k$-ball
  $B^E(\bar a)$, for some tuple $\bar a$, which we may assume lies in $E$, by 
  the density condition in \ref{D:geomstr}. Since $E\preceq P(N)$, Lemma
  \ref{L:ball} gives that $k=\dim \pi(Z)\leq \dim(Z)=\dim(Y)$.

 In order to show that $\dim(Y)\leq k$, choose $\dim(Y)=\dim(Z)$ many
coordinates such that, for the corresponding projection $\pi:P(N)^n\rightarrow
P(N)^{\dim Y}$, the set $\pi(Z)$ has non-empty interior in $P(N)^{\dim Y}$. In
  particular, the set $\pi(\varphi(P(N),b))$ has non-empty
  interior. Since $M \preceq^P N$, we deduce that $\pi(\varphi(E,b))$
  has non-empty interior in $E^{\dim(Y)}$, as desired.
\end{proof}

\begin{cor}\label{C:dim_incr}

  Let $Y_1 \subseteq Y_2$ be $\LL^P$-definable subsets of $E^n$. Then
  $\dim(Y_1)\leq \dim(Y_2)$.
\end{cor}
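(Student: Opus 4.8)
The plan is to reduce the statement to the case where $Y_1$ is a subset of $Y_2$ with no extra parameters, apply the projection characterization of dimension from Lemma~\ref{L:proj_dim_coordinates}, and use monotonicity of topological dimension for $\LL$-definable sets. First I would fix honest definitions: in some sufficiently saturated $\LL^P$-elementary extension $N$ of $M$, write $Z_i = \chi_i(P(N), d_i)$ as an honest definition of $Y_i = \varphi_i(E, b_i)$ for $i = 1, 2$. Since $Y_1 \subseteq Y_2$ and honest definitions only depend on the underlying set (Remark~\ref{r:proj}), the formula $\varphi_2(x, b_2)$ is an $\LL_M$-formula containing $Y_1$, so by the containment property in Remark~\ref{r:proj} we get $Z_1 = \chi_1(P(N), d_1) \subseteq \varphi_2(P(N), b_2)$. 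This is the key inclusion: although $Z_1$ and $Z_2$ are honest definitions living in a common elementary extension, we do not automatically have $Z_1 \subseteq Z_2$, but we do have $Z_1 \subseteq \varphi_2(P(N), b_2)$, and that is enough.

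Next I would transfer the inclusion to witness dimension. Let $k = \dim(Y_1) = \dim(Z_1)$. By Lemma~\ref{L:proj_dim_coordinates} applied in $(N, P(N))$ — or directly by the proof of Lemma~\ref{L:ball} — there is a projection $\pi : P(N)^n \to P(N)^k$ onto $k$ coordinates such that $\pi(Z_1)$ has non-empty interior in $P(N)^k$, hence contains a $P(N)$-$k$-ball $B^{P(N)}(\bar a)$, which we may take with $\bar a$ in $P(N)$ by the density condition in Definition~\ref{D:geomstr}. Since $Z_1 \subseteq \varphi_2(P(N), b_2)$, the projection $\pi(\varphi_2(P(N), b_2))$ also contains this $P(N)$-$k$-ball. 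By Remark~\ref{r:proj}, $\pi(Z_2)$ is an honest definition of $\pi(Y_2)$, and the containment property gives $\pi(\varphi_2(P(N),b_2)) \subseteq$ any $\LL_{P(N)}$-set containing $\pi(Y_2)$; in particular it is contained in $\chi_2(P(N), d_2)$'s projection up to the honest-definition sandwich $\pi(Y_2) \subseteq \pi(Z_2) \subseteq \pi(\varphi_2(P(N), b_2))$, so $\pi(Z_2)$ already contains the $P(N)$-$k$-ball. Alternatively, and more cleanly, I would argue as in Corollary~\ref{L:good_dim}: the existence of a $P(N)$-$k$-ball inside $\pi(\varphi_2(P(N), b_2))$ is expressed by a first-order $\LL^P$-formula (with parameters $b_2$, and existentially quantified ball-parameters ranging over $P$), which then holds in $M$, giving a $P(M) = E$-$k$-ball inside $\pi(\varphi_2(E, b_2)) = \pi(Y_2)$. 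Since $E \preceq P(N)$, Lemma~\ref{L:ball} applied to $\pi(Z_2)$ (which contains this ball, being an honest definition of $\pi(Y_2)$) yields $\dim(Z_2) \geq \dim(\pi(Z_2)) \geq k$, i.e.\ $\dim(Y_2) \geq \dim(Y_1)$.

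The main obstacle is precisely that honest definitions are not functorial under inclusion: passing to the same extension $N$ does not give $Z_1 \subseteq Z_2$, only $Z_1 \subseteq \varphi_2(P(N), b_2)$, and $\varphi_2(P(N), b_2)$ may have larger dimension than $Z_2$ (as the opening example of Section~\ref{S:honesty} illustrates in spirit). The way around this is to not compare $Z_1$ and $Z_2$ directly, but to extract a $P$-ball witnessing $\dim(Y_1)$, push it down to $M$ via the transfer of a first-order statement exactly as in the proof of Corollary~\ref{L:good_dim}, and then push it back up into any honest definition $Z_2$ of $Y_2$ using that $E \preceq P(N)$ together with Lemma~\ref{L:ball}. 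Everything else is bookkeeping: choosing the right projection coordinates, ensuring ball-parameters lie in the predicate via the density axiom, and invoking Fact~\ref{F:honest} to know honest definitions exist in the first place.
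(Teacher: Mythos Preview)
Your alternative argument (the one mirroring Corollary~\ref{L:good_dim}) is correct, but the route is far more elaborate than necessary; the first attempt, however, has a genuine slip. When you write that ``the containment property gives $\pi(\varphi_2(P(N),b_2)) \subseteq$ any $\LL_{P(N)}$-set containing $\pi(Y_2)$'' and conclude that $\pi(Z_2)$ contains the $P(N)$-$k$-ball, this does not follow: the containment property in Remark~\ref{r:proj} bounds the \emph{honest definition} $\chi_2(P(N),d_2)$ from above, not the external trace $\varphi_2(P(N),b_2)$, and from the sandwich $\pi(Y_2)\subseteq \pi(Z_2)\subseteq \pi(\varphi_2(P(N),b_2))$ you cannot deduce that a ball sitting in the rightmost set lies in the middle one. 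You recognised this and switched to the transfer argument, which does work.

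The paper, by contrast, treats the corollary as immediate from Lemma~\ref{L:proj_dim_coordinates} with no further argument: that lemma characterises $\dim(Y)$ \emph{intrinsically} as the largest $k$ for which some coordinate projection $\pi(Y)$ has non-empty interior in $E^k$. Since $Y_1\subseteq Y_2$ implies $\pi(Y_1)\subseteq \pi(Y_2)$ for every such $\pi$, any $k$ witnessing $\dim(Y_1)$ also witnesses a lower bound for $\dim(Y_2)$, and the inequality drops out in one line. The whole point of Lemma~\ref{L:proj_dim_coordinates} is precisely to remove honest definitions from the picture, so there is no need to reintroduce them, wrestle with non-functoriality, or pass through balls and elementary transfer. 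Your machinery is essentially reproving pieces of Lemma~\ref{L:proj_dim_coordinates} and Corollary~\ref{L:good_dim} inside the argument rather than simply invoking the former.
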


\begin{lemma}\label{L:dim_subsets}

	Given an $\LL^P$-definable subset $Y$ of $E^n$,
$$\dim(Y)=\max\{ \dim(Z_0): Z_0 \subseteq Y \text{ and } Z_0 \text{ is }
\LL\text{-definable over } E\}.$$
\end{lemma}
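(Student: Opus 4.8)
The plan is to prove both inequalities. One is immediate from the monotonicity already established: if $Z_0\subseteq Y$ is $\LL$-definable over $E$, then $Z_0$ is its own honest definition, so $\dim(Z_0)$ as in Definition \ref{D:dim_hon} agrees with its topological dimension; applying Corollary \ref{C:dim_incr} to the inclusion $Z_0\subseteq Y$ gives $\dim(Z_0)\le\dim(Y)$. Hence the maximum on the right is at most $\dim(Y)$. The content is the reverse inequality: we must exhibit an $\LL$-definable-over-$E$ subset $Z_0\subseteq Y$ with $\dim(Z_0)=\dim(Y)$.

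For the reverse inequality, write $k=\dim(Y)$ and invoke Lemma \ref{L:proj_dim_coordinates}: after reindexing coordinates there is a projection $\pi\colon E^n\to E^k$ such that $\pi(Y)$ has non-empty interior in $E^k$, so $\pi(Y)$ contains a basic box $B^E(\bar a)=\theta(E,\bar a_1)\times\cdots\times\theta(E,\bar a_k)$, and by the density condition in Definition \ref{D:geomstr} we may take $\bar a$ in $E$. Now use Skolem functions in the pair: there is an $\LL^P_E$-definable section $s\colon B^E(\bar a)\to E^{n-k}$ such that $(\bar x,s(\bar x))\in Y$ for every $\bar x\in B^E(\bar a)$; write $g\colon B^E(\bar a)\to E^n$ for the graph map $\bar x\mapsto(\bar x,s(\bar x))$, which is an $\LL^P_E$-definable injection with image contained in $Y$. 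The coordinates of $g$ are finitely many $\LL^P_E$-definable unary-in-each-argument functions; by Fact \ref{F:Dries}(\ref{F:fcts}) (iterated over each of the $k$ arguments, using that $B^E(\bar a)$, being a product of infinite $\LL_E$-definable sets, is not small and that a finite union of small sets is small) each coordinate of $s$ agrees with an $\LL_E$-definable function off a small subset of $B^E(\bar a)$. Removing the (small, hence nowhere dense — by Remark \ref{R:small_fte}(2), an $\LL$-definable small subset of the box is finite after intersecting with any line, so has empty interior) exceptional set, we obtain an $\LL_E$-definable open subbox $B'\subseteq B^E(\bar a)$ and an $\LL_E$-definable map $\tilde g\colon B'\to M^n$ agreeing with $g$ on $B'\cap E^n$...

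more carefully: the issue is that "$g$ and $\tilde g$ agree off a small set" is a statement about the domain $B^E(\bar a)\subseteq E^k$, and we want an $\LL$-definable-over-$E$ \emph{subset of $E^n$}. So set $Z_0=\tilde g(B')\cap E^n$ — but this is only externally definable. Instead, take $Z_0$ to be the $\LL_E$-definable set $\{\bar y\in M^n:\pi(\bar y)\in \pi(Z')\ \&\ \bar y=\tilde g(\pi(\bar y))\}$ where $\pi(Z')=B'$ is an $\LL_E$-definable open subbox of $M^k$ (extending the box $B^E(\bar a)$ over $M$): this $Z_0$ is $\LL_E$-definable, has $\pi(Z_0)=B'$ with non-empty interior in $M^k$, hence $\dim(Z_0)=k$ by Lemma \ref{L:ball} applied over $M$ (or directly, since $Z_0$ is a graph over an open subset of $M^k$); and $Z_0\cap E^n$ is the graph of $\tilde g$ over $B'\cap E^k$, which agrees with $g$ there, so $Z_0\cap E^n\subseteq Y$. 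Finally, since $Z_0$ is $\LL$-definable over $E$, it is its own honest definition, so $\dim(Z_0\cap E^n)=\dim(Z_0)=k$ in the sense of Definition \ref{D:dim_hon}, and $Z_0\cap E^n\subseteq Y$ is the desired $\LL$-definable-over-$E$ subset witnessing $\dim(Y)\le k$. This closes the loop.

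The main obstacle is the bookkeeping in the paragraph above: making sure that after discarding the small exceptional sets coming from Fact \ref{F:Dries}(\ref{F:fcts}) one still has a box of full dimension $k$ in the domain (this uses that small $\LL$-definable sets have empty interior, Remark \ref{R:small_fte}(2), so their union over the finitely many coordinate functions is still nowhere dense and its complement in the box contains an open subbox), and that the resulting $\LL_E$-definable object is genuinely a \emph{subset of $E^n$ sitting inside $Y$} rather than merely an external approximation — handled by taking the graph of the corrected $\LL_E$-definable map and intersecting with $E^n$, then noting this graph is $\LL_E$-definable as a subset of $M^n$ and so equals its own honest definition.
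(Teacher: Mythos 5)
Your first inequality is fine and matches the paper. The reverse inequality, however, has two genuine gaps. First, the section $s\colon B^E(\bar a)\to E^{n-k}$ with $(\bar x,s(\bar x))\in Y$ requires definable Skolem functions for the theory $T^P$ of the pair; the paper only assumes Skolem functions for $T$, and $T^P$ is not known (and in general fails) to have definable choice, so the map $g$ you build the whole argument on is not available. Second, and more fatally, your appeal to Fact \ref{F:Dries}(\ref{F:fcts}) is vacuous here: the domain $B^E(\bar a)=\theta(E,\bar a_1)\times\cdots\times\theta(E,\bar a_k)$ is a subset of $E^k$ and is therefore itself \emph{small} (it is the image of $E^k$ under a coordinate projection), contrary to your parenthetical claim that it is not small — you appear to be conflating $\theta(E,\bar a_i)$ with the open set $\theta(M,\bar a_i)$. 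Consequently ``$s$ agrees with an $\LL_E$-definable function off a small set'' gives no information at all on $B^E(\bar a)$, the corrected map $\tilde g$ need not agree with $g$ at a single point of $E^k$, and the crucial containment $Z_0\cap E^n\subseteq Y$ is not established. (Your side remark that small sets are nowhere dense is also false for $\LL^P$-definable small sets such as $E\cap\theta(M,\bar a)$; Remark \ref{R:small_fte}(2) only concerns $\LL$-definable small sets.)

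The paper's proof avoids all of this by working directly with an honest definition $Z=\chi(P(N),d)$ of $Y=\varphi(E,b)$: since $\chi(P(N),d)\subseteq\varphi(P(N),b)$ and the topological dimension is definable in $T$, the existence of a parameter $d$ in $P$ with $\chi(P,d)\subseteq\varphi(P,b)$ and $\dim(\chi(\cdot,d))=\dim(Y)$ is an $\LL^P$-sentence true in $N$, hence true in $M$ by $M\preceq^P N$; the resulting $\chi(E,\tilde d)$ with $\tilde d\in E$ is the required $\LL_E$-definable subset of $Y$ of full dimension. If you want to repair your argument, this transfer of the honest definition down to parameters in $E$ is the step you are missing; the graph-of-a-section strategy cannot be carried out with the tools the paper provides.
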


\begin{proof} By Corollary \ref{C:dim_incr}, the value $\dim(Y)$ is at
  least the maximum of the values on the right-hand side of the
  equation. We need only prove the other inequality. Choose an honest
  definition $Z=\chi(P(N),d)$ of $Y=\varphi(E, b)$ in some
  $\LL^P$-elementary extension $N$ of $M$, with $b$ in $M$ and $d$ in
  $P(N)$.  Since $M\preceq^P N$ and the topological dimension is definable, there is some
  $\tilde d$ in $E$ such  that $\chi(E,\tilde d)\subseteq \varphi(E,b)=Y$ with
  $\dim(\chi(M,\tilde d))=\dim(Y)$, as desired.
\end{proof}

Density of the predicate $P$ implies the existence of 
(non-standard)
elements of the appropriate dimension for any honest definition of an
$\LL^P$-definable subset of $E^n$.

\begin{lemma}\label{L:dim_hon}

	Let $Y\subseteq E^n$ be an
	$\LL^P$-definable subset with honest definition $Z\subseteq P(N)^n$ having
	parameters over $P(N)$, for some $\kappa$-saturated elementary
	extension $M\preceq^P N$. Whenever $Z$ is definable over a subset $B$ of $N$ of size
	strictly less than $\kappa$, there is a generic element $a$ in $Z$, that is, with $\dim(a/B)=\dim(Z)$.
\end{lemma}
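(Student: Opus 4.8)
The statement asks for a generic point in an honest definition $Z\subseteq P(N)^n$ of $Y=\varphi(E,b)$, i.e.\ an element $a\in Z$ with $\dim(a/B)=\dim(Z)$, where $Z$ is $\LL_B$-definable and $|B|<\kappa$. The plan is to reduce to the case $\dim(Z)=n$ and exploit the fact, established in the proof of Corollary~\ref{L:good_dim}, that an $n$-dimensional honest definition contains a $P(N)$-$n$-ball, together with $\kappa$-saturation of $N$.

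\smallskip
First I would set $k=\dim(Z)$ and choose, by Lemma~\ref{L:proj_dim_coordinates} (applied over $P(N)$, noting that $Z$ is an honest definition of itself as an $\LL$-definable subset of $P(N)^n$), a projection $\pi:P(N)^n\to P(N)^k$ such that $\pi(Z)$ has non-empty interior. Since the topological dimension is definable and $B$ has size $<\kappa$, I can find such a $\pi$ and a $P(N)$-$k$-ball $B^{P(N)}(\bar c)=\theta(P(N),\bar c_1)\times\cdots\times\theta(P(N),\bar c_k)$ contained in $\pi(Z)$; by the density condition in Definition~\ref{D:geomstr} the tuples $\bar c_i$ can be taken inside $P(N)$, and enlarging $B$ by these finitely many parameters keeps $|B|<\kappa$. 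It then suffices to produce, inside the ball $B^{P(N)}(\bar c)$, a point $a'\in\pi(Z)$ with $\dim(a'/B)=k$, and then lift it to $Z$: the fibre $\pi^{-1}(a')\cap Z$ is non-empty, so any element of it has geometric dimension at least $k$ over $B$, hence exactly $k=\dim(Z)$.

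\smallskip
The core of the argument is thus to find a point of full dimension $k$ inside a $k$-ball $\theta(P(N),\bar c_1)\times\cdots\times\theta(P(N),\bar c_k)$ with $P(N)$ a $\kappa$-saturated (in the pair sense) model. I would build such a point coordinate by coordinate. Each basic open set $\theta(P(N),\bar c_i)$ is an infinite $\LL$-definable subset of $P(N)$, so it is not contained in $\acl(B\cup\{a_1,\ldots,a_{i-1}\})$ once this algebraic closure has been computed inside $P(N)$; here one uses that $\acl$ restricted to $P(N)$ has size bounded by $|B|+i<\kappa$, together with $\kappa$-saturation of $P(N)$ as an $\LL$-structure (which follows from Remark~\ref{R:small_fte}, since a sufficiently saturated pair has $P(N)$ of large $\LL$-saturation degree). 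Picking successively $a_i\in\theta(P(N),\bar c_i)$ with $\dim(a_i/B,a_1,\ldots,a_{i-1})=1$ and using the additivity of geometric dimension yields $\dim(a_1,\ldots,a_k/B)=k$, i.e.\ the desired generic point of $\pi(Z)$.

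\smallskip
The main obstacle is verifying that $P(N)$, as a pure $\LL$-structure, is sufficiently saturated to carry out the coordinate-by-coordinate construction: the pair $N$ is saturated as an $\LL^P$-structure, but $N$ itself is \emph{not} $\LL$-saturated over $P(N)$ (as noted in Remark~\ref{R:small_fte}). What one genuinely needs, however, is only that realizations of $1$-types over small subsets of $P(N)$ exist \emph{inside} $P(N)$, and this is exactly the content of $P(N)\preceq^P N$ combined with the density condition: any non-empty $\LL$-definable open subset of $P(N)^n$ over $B$ meets $P(N)^n$ in a set which, by the density clause of Definition~\ref{D:geomstr} and $\kappa$-saturation of the pair, contains a point omitting the (small) algebraic closure of $B$. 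Once this saturation-of-the-predicate point is pinned down, the rest is the routine dimension bookkeeping sketched above.
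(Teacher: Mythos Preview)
Your proposal is correct, and the reduction to the full-dimensional case via a coordinate projection matches the paper's (terse) invocation of Remark~\ref{r:proj}. The core step, however, differs. Once $\dim(Z)=n$, the paper proceeds in one stroke by writing down the partial $\LL^P$-type
\[
\Sigma(x)=\{\chi(x,d)\}\cup\{x\in P\}\cup\{\neg\psi(x,b):\dim\psi(x,b)<n,\ b\in B\}
\]
and checking finite satisfiability directly: for finitely many $\LL_B$-definable sets $X_i$ of dimension $<n$, the set $\chi(N,d)\setminus\bigcup_i X_i$ is still $n$-dimensional, hence contains a non-empty open subset of $N^n$, which meets $P(N)^n$ by density of the predicate. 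Your argument instead locates an explicit $P(N)$-$k$-ball inside $\pi(Z)$ and then builds the generic point coordinate by coordinate using $\kappa$-saturation of $P(N)$. Both routes are valid; the paper's is shorter and uses density of $P$ in $N$ rather than saturation of $P(N)$ itself, while yours is more constructive but requires the extra bookkeeping of the inductive step and the lifting from $\pi(Z)$ back to $Z$.

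One minor correction: your citation of Remark~\ref{R:small_fte} for the saturation of $P(N)$ is misplaced --- that remark concerns the rank of $M$ over $E$, not saturation of the predicate. The fact you actually need (that $P(N)$ is $\kappa$-saturated as an $\LL$-structure whenever $N$ is $\kappa$-saturated as an $\LL^P$-structure) follows simply by adjoining $P(x)$ to any finitely satisfiable $\LL$-type over a small subset of $P(N)$ and realising the resulting $\LL^P$-type in $N$.
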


\begin{proof}
	By Remark \ref{r:proj}, we may assume that $\dim(Z)$ equals $n$.
	Write $Z=\chi(P(N),d)$, for some tuple $d$ in $P(N)$.

We need only show that the   collection of formulae
	$$\Sigma(x)=\{\chi(x,d)\} \cup \{ x\in P\} \cup \{ \neg
	\psi(x,b):   \dim(\psi(x,b))<n \}_{\begin{subarray}{l} b \in B
		\\ \psi(x,y) \ \LL-\text{formula}   \end{subarray}},$$
	\noindent is finitely satisfiable.  Indeed, given
	$\LL_B$-definable subsets $X_1,\ldots,X_k$ of $N^n$ with
	$\dim(X_i)<n$, the $\LL$-definable set
	\[\chi(N,d)\setminus \bigcup\limits_{i=1}^k X_i\] has
	dimension $n$, so it
	contains some non-empty open subset of $N^n$. By density, it contains
	an element $a$ in $P(N)^n$, as desired.
\end{proof}

\begin{cor}\label{C:dim_min}
If $Y$ is $\LL^P$-definable subset of $E^n$ and $X$ is an $\LL$-definable
subset of $M^n$ such that $Y\subseteq X$, then $\dim(Y)\leq \dim(X)$.
\end{cor}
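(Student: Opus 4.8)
The plan is to reduce the statement to Corollary \ref{C:dim_incr} together with Lemma \ref{L:dim_hon}. Let $Y\subseteq E^n$ be $\LL^P$-definable and let $X\subseteq M^n$ be $\LL$-definable with $Y\subseteq X$; write $X=\psi(M,b)$ for some $\LL$-formula $\psi$ and parameter $b$ in $M$. Pass to a sufficiently saturated $\LL^P$-elementary extension $M\preceq^P N$, and let $Z=\chi(P(N),d)$ be an honest definition of $Y$, with $d$ a tuple in $P(N)$; by Fact \ref{F:honest} we have $Y\subseteq Z\subseteq \psi(P(N),b)$, so in particular $\dim(Z)=\dim(Y)$. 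The key point is that $Z$ is defined over the set $B=d\cup b$, which we may take to have size strictly less than the saturation cardinal $\kappa$.

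Next I would apply Lemma \ref{L:dim_hon} to obtain a generic element $a$ in $Z$ over $B$, that is, $\dim(a/B)=\dim(Z)=\dim(Y)$. Since $a$ lies in $Z\subseteq \psi(P(N),b)\subseteq \psi(N,b)=X(N)$, the element $a$ is a point of $X$ (in the elementary extension) with $\dim(a/b)\geq \dim(a/B)=\dim(Y)$, using that adding parameters can only decrease geometric dimension. But for the $\LL$-definable set $X=\psi(N,b)$ over $b$, every point has geometric dimension over $b$ at most $\dim(X)$, since the topological dimension of an $\LL$-definable set is definable (cf. \cite{lM95}) and hence invariant under elementary extension. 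Therefore $\dim(Y)\leq \dim(a/b)\leq \dim(X)$, as desired.

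The only subtlety to watch is the bookkeeping of parameters: one must ensure that the honest definition $Z$ and the original $\LL$-set $X$ can be taken over a common small parameter set inside $N$, so that Lemma \ref{L:dim_hon} applies and the generic point of $Z$ is simultaneously recognized as a point of $X$ of the right dimension. This is routine given that $M\preceq^P N$ with $N$ chosen $\kappa$-saturated for $\kappa$ larger than the sizes of $b$ and $d$; no new ideas beyond the ones already assembled in this section are needed. I expect this to be a short argument, essentially a one-line combination of Lemma \ref{L:dim_hon} with the definability of topological dimension, and the main (minor) obstacle is simply making the parameter sets line up correctly between the two models.
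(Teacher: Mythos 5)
Your argument is correct and is essentially the paper's own proof: pass to a saturated $\LL^P$-elementary extension, take an honest definition $Z$ of $Y$, pick a generic point of $Z$ over the parameters via Lemma \ref{L:dim_hon}, and bound its dimension by $\dim(X)$ using definability of the topological dimension. The only step worth making explicit is the inclusion $Z\subseteq\psi(N,b)$: Fact \ref{F:honest} only gives $Z\subseteq\varphi(P(N),b_0)$ for the external definition $\varphi(x,b_0)$ of $Y$, and one then uses that the containment $\varphi(E,b_0)\subseteq\psi(M,b)$ transfers to $P(N)$ because $M\preceq^P N$ (or, equivalently, takes the honest definition with respect to $\varphi(x,b_0)\wedge\psi(x,b)$).
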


\begin{proof}

There is some finite subset $B\subseteq M$ such that $Y=\varphi(E)$ and
 $X=\psi(M)$, for some $\LL_B$-formulae $\varphi$ and $\psi$. In a sufficiently
 saturated elementary extension $M\preceq^P N$, choose an honest
 definition $Z\subseteq P(N)^n$ of $Y=\varphi(E)$. By Lemma \ref{L:dim_hon},
 there is a generic point $a$ in $Z$ over $B$.
 As $Z\subseteq \varphi(P(N))\subseteq \psi(N)$, we obtain
  $\dim(Y)=\dim(a/B)\leq \dim(X)$, as desired.
\end{proof}

\medskip
{\bf For the rest of this section, we assume that our geometric NIP theory $T$ 
of topological 
rings is tame for pairs (cf. Definition \ref{D:tame_Pairs})}. 

\medskip
By Theorem \ref{T:opendef}, the topological 
closure $\cl_{M^n}(Y)$ of an $\LL^P$-definable 	subset $Y$ of $E^n$ is 
$\LL_E$-definable. We will now show that the dimension of the $E$-trace of the 
boundary
$(\cl_{M^n}(Y)\setminus Y)\cap E^n$ is strictly less than the
dimension of $Y$.

\begin{prop}\label{P:dim_tame} Let $Y$ be an $\LL^P$-definable subset of $E^n$.
\begin{enumerate}
\item\label{P:dim_topcl} If $\cl_{M^n}(Y)$ denotes the topological
  closure, then $\dim(Y)=\dim(\cl_{M^n}(Y) )$;
\item\label{P:dim_max}$\dim(Y)=\min\{ \dim(X): X\subseteq M^n \text{ is
  }\LL\text{-definable and }  Y\subseteq X \}$;
\item\label{P:dim_bound}
  $\dim((\cl_{M^n}(Y) \setminus Y)\cap E^n)<\dim(Y)$.
\end{enumerate}
\end{prop}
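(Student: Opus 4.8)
The plan is to deduce all three statements from the machinery already developed in this section, deriving the later items from the earlier ones. For item \eqref{P:dim_topcl}, since $Y\subseteq\cl_{M^n}(Y)$ and the latter is $\LL_E$-definable by Theorem \ref{T:opendef}, Corollary \ref{C:dim_min} gives $\dim(Y)\leq\dim(\cl_{M^n}(Y))$. For the reverse inequality, I would pass to an honest definition $Z=\chi(P(N),d)$ of $Y$ in a sufficiently saturated $M\preceq^P N$ and observe that $\cl_{P(N)^n}(Z)$ is an $\LL$-definable set over $P(N)$ containing $Z$ with the same topological dimension (the topological closure of a definable set in a topological structure with cell decomposition does not raise dimension); I would then argue that $\cl_{P(N)^n}(Z)$ is an honest definition of $\cl_{E^n}(Y)$, and that the latter is the $E$-trace of $\cl_{M^n}(Y)$ because the topological closure commutes with the elementary pair $M\preceq^P N$ and with restriction to a dense elementary substructure. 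Hence $\dim(\cl_{M^n}(Y))=\dim(\cl_{P(N)^n}(Z))=\dim(Z)=\dim(Y)$.

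Item \eqref{P:dim_max} is the combination of two facts already in hand: Corollary \ref{C:dim_min} shows $\dim(Y)\leq\dim(X)$ for \emph{every} $\LL$-definable $X\supseteq Y$, so $\dim(Y)$ is a lower bound for the set on the right; and Theorem \ref{T:opendef} produces a specific witness realising this bound, namely $X=\cl_{M^n}(Y)$, which is $\LL$-definable and has $\dim(X)=\dim(Y)$ by item \eqref{P:dim_topcl}. So the minimum is attained and equals $\dim(Y)$.

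Item \eqref{P:dim_bound} is the substantive one. Write $F=(\cl_{M^n}(Y)\setminus Y)\cap E^n$; this is an $\LL^P$-definable subset of $E^n$, so $\dim(F)$ makes sense. By Lemma \ref{L:dim_subsets}, $\dim(F)=\dim(Z_0)$ for some $Z_0\subseteq F$ that is $\LL$-definable over $E$. Suppose toward a contradiction that $\dim(Z_0)=\dim(F)\geq\dim(Y)=:k$. Using Lemma \ref{L:proj_dim_coordinates} applied to $Y$, fix a projection $\pi:E^n\to E^k$ onto $k$ coordinates with $\pi(Y)$ having non-empty interior, and consider the relative position of $Z_0$ and $Y$ under $\pi$; the point is that $Z_0$ is $\LL_E$-definable and disjoint from $Y$, yet sits inside $\cl_{M^n}(Y)$, so every point of $Z_0$ is a limit of points of $Y$. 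Picking a point $a\in Z_0$ generic over $E$ with $\dim(a/E)=k$, density of $E$ together with the honest-definition description of $Y$ would force $a$ itself to lie in the honest definition of $Y$, hence — since the honest definition is trapped inside any $\LL_E$-superset of $Y$ and $Z_0\cap Y=\emptyset$ while $a\in\cl(Y)$ — produce an $\LL_E$-definable open box around $a$ meeting $Y$ in a set of full local dimension but disjoint from $Y$, a contradiction. I expect the main obstacle to be exactly this last step: carefully extracting, from ``$a\in Z_0\subseteq\cl_{M^n}(Y)$, $\dim(a/E)=\dim(Y)$, and $Z_0\cap Y=\emptyset$'', a genuinely $\LL_E$-definable open neighbourhood on which $Y$ is both dense (from $a\in\cl(Y)$ and genericity) and codense (from the $\LL_E$-definable set $Z_0$ it avoids), and then invoking Fact \ref{F:Dries}(\ref{F:open_1dim}) in suitably many coordinates — or an honest-definition version of it — to rule this out. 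The technical care is in descending from the non-standard model $N$ where the honest definition lives back down to $E$ via $M\preceq^P N$ and density, which is the same descent pattern used in the proofs of Lemmas \ref{L:dim_subsets} and \ref{L:dim_hon}.
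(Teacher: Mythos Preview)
Your plan for item \eqref{P:dim_topcl} has a genuine circularity. Even granting that $\cl_{P(N)^n}(Z)$ is an honest definition of $\cl_{E^n}(Y)$ (which can be verified, though it needs more care than you indicate), what this buys you is the equality of \emph{honest} dimensions: $\dim(\cl_{E^n}(Y))=\dim(\cl_{P(N)^n}(Z))=\dim(Z)=\dim(Y)$. But $\cl_{E^n}(Y)=\cl_{M^n}(Y)\cap E^n$, and the quantity you actually need is the \emph{topological} dimension of $\cl_{M^n}(Y)$ in $M^n$. Passing from $\dim(\cl_{M^n}(Y)\cap E^n)$ to $\dim(\cl_{M^n}(Y))$ is precisely an instance of the statement you are proving: it asks whether, for the $\LL$-definable set $W=\cl_{M^n}(Y)$ in which $E$-points are dense, the honest dimension of $W\cap E^n$ recovers $\dim(W)$. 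Note also that $\cl_{M^n}(Y)$ need not be $\LL_E$-definable in general (only $\LL_A$-definable over the special parameter set $A$), so you cannot shortcut via elementarity $E\preceq M$. The paper avoids this by a direct induction on $n$: in the full-dimensional case $\dim(\cl_{M^n}(Y))=n$, one writes $Y=X\cap E^n$ with $X$ $\LL$-definable, argues that $X$ must itself have dimension $n$, and then uses the density condition to trap an $E$-parametrised basic open box inside $X$, whence inside $Y$; the lower-dimensional case reduces by projection.

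Your approach to item \eqref{P:dim_bound} is much harder than necessary, and the sketch you give does not close. The paper's argument is short and entirely soft once \eqref{P:dim_max} is in hand: pick an $\LL$-definable $X\supseteq Y$ with $\dim(X)=\dim(Y)$, observe that one may take $Y=X\cap E^n$, and then
\[
(\cl_{M^n}(Y)\setminus Y)\cap E^n \;\subseteq\; (\cl_{M^n}(X)\setminus X)\cap E^n,
\]
so by Corollary \ref{C:dim_min} and the standard boundary inequality for $\LL$-definable sets,
\[
\dim\bigl((\cl_{M^n}(Y)\setminus Y)\cap E^n\bigr)\;\leq\;\dim(\cl_{M^n}(X)\setminus X)\;<\;\dim(X)=\dim(Y).
\]
Your contradiction strategy, by contrast, tries to manufacture an $\LL_E$-definable open set on which $Y$ is simultaneously dense and codense and then appeal to a higher-dimensional analogue of Fact \ref{F:Dries}(\ref{F:open_1dim}); but no such analogue is available at this point, and the descent from $P(N)$ back to $E$ that you allude to does not obviously produce one. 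The key idea you are missing is simply to transfer the boundary question from $Y$ to an $\LL$-definable envelope $X$ of the same dimension.
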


\begin{proof}
  We will prove $(\ref{P:dim_topcl})$, by induction on $n$. By Corollary
  \ref{C:dim_min}, we need only prove that $\dim(Y)\geq \dim( \cl_{M^n}(Y)) $.
  For $n=1$, the result is obvious, since an externally definable subset of
  $E$ of dimension $0$ is finite, and thus so is its topological closure.

Assume first that $ \dim(\cl_{M^n}(Y)) =n$ and write
$Y=X\cap E^n$, for some $\LL$-definable subset $X$ of
  $M^n$. Note that $\dim(X)=n$, for otherwise the set
  $\cl_{M^n}(Y)\setminus X$ has non-empty interior, so it contains
  a basic non-empty open subset $\mathcal U=\theta(M,\bar
  a_1)\times\cdots\times\theta(M,\bar a_n)$, but 
  \[\emptyset \neq \mathcal U\cap Y=\mathcal U\cap X\cap E^n
  \subseteq \mathcal U\cap X =\emptyset.\] 
The density condition in Definition \ref{D:geomstr} implies that 
the set $X$ 
contains a basic non-empty open subset
$$\theta(M,\bar
b_1)\times\cdots\times\theta(M,\bar b_n),$$
\noindent for some tuples $\bar b_1,\ldots, \bar b_n$ in $E$.
Since
\[\theta(E,\bar
b_1)\times\cdots\times\theta(E,\bar b_n)\subseteq X\cap E^n =Y,\]

  \noindent we deduce that
  $\dim(Y)\geq n$, by Corollary \ref{C:dim_incr}.

 In case $\dim (\cl_{M^n}(Y)) <n$, choose some projection
  $\pi:M^n\rightarrow M^k$, with $k=\dim(\cl_{M^n}(Y)) <n$, such that
  $\pi((\cl_{M^n}(Y)))$ has non-empty
  interior in $M^k$. By induction, we have that
  $\dim(\pi (Y))=\dim(\cl_{M^k}(\pi (Y)))$. If
  $\dim(\cl_{M^k}(\pi (Y)))=k$ then
  $k=\dim(\cl_{M^k}(\pi (Y)))=\dim(\pi(Y))$ and thus $\dim(Y)\geq k$,
  by Lemma \ref{L:proj_dim_coordinates}.

  Hence, we need only show that $\dim(\cl_{M^k}(\pi (Y)))$ cannot be
  strictly less than $k$.  Otherwise, the
  set $\pi (\cl_{M^n}(Y))\setminus \cl_{M^k}(\pi (Y))$ has non-empty
  interior, so choose an element $a$ in
  $\pi(\cl_{M^n}(Y))$ and a basic non-empty neighbourhood $\mathcal U$ in $M^k$ 
  such  that
  $a$ belongs to $\mathcal U \subseteq \pi 
  (\cl_{M^n}(Y))\setminus \cl_{M^k}(\pi (Y))$.  
  In particular, there is
  some tuple $\tilde a$  in $\cl_{M^n}(Y)$ such that $\pi(\tilde a)=a$, so the 
  open set 
  $\pi^{-1}(\mathcal U)$ contains the accumulation point $\tilde a$.  Hence the 
  set
  \[ Z= \pi\inv(\mathcal U)\cap Y  \]

  \noindent cannot be empty. However, the set
  $\pi(Z)$ is contained in $\mathcal U\cap \pi(Y)=\emptyset$,
  which is a contradiction.

  In order to prove $(\ref{P:dim_max})$, observe that $\dim(Y)$ is
  less than or equal to the right-hand side, by Corollary
  \ref{C:dim_min}. Let $X$ be an $\LL$-definable subset of $M^n$
  containing $Y$ of least possible dimension.  Notice that
  $X\cap \cl_{M^n}(Y)$ is again $\LL$-definable, by Theorem
  \ref{T:opendef}, and contains $Y$. In particular,
   $$\dim(Y) \leq \dim(X)\leq \dim(X\cap \cl_{M^n}(Y))\leq 
   \dim(\cl_{M^n}(Y))\stackrel{(\ref{P:dim_topcl})}{=}\dim(Y).$$

   We conclude now by showing $(\ref{P:dim_bound})$. Let $X$ be an
   $\LL$-definable subset of $M^n$ of least possible dimension
   containing $Y$.  If $Y=X_1\cap E^n$, for some $\LL$-definable
   subset $X_1$ of $M^n$, notice $X\cap X_1$ is again $\LL$-definable
   and $Y=X\cap X_1\cap E^n$, so
   \[ \dim(X)\stackrel{(\ref{P:dim_max})}{=}\dim(Y)\leq \dim(X\cap 
   X_1)\leq\dim(X).\]
   \noindent We may therefore assume that $Y=X\cap E^n$. We have that
  \begin{multline*} (\cl_{M^n}(Y)\setminus Y)\cap E^n\subseteq  
 (\cl_{M^n}(Y)\cap E^n)
  \setminus Y \subseteq (\cl_{M^n}(X)\cap E^n)\setminus Y   \subseteq \\
  (\cl_{M^n}(X)\cap E^n)\setminus X \subseteq (\cl_{M^n}(X)\setminus
  X)\cap E^n.\end{multline*}

 \noindent Corollaries \ref{C:dim_incr} and \ref{C:dim_min} yield that
  \begin{multline*} \dim((\cl_{M^n}(Y)\setminus Y)\cap E^n)\leq
  \dim((\cl_{M^n}(X)\setminus X)\cap E^n)\leq \\
\dim(\cl_{M^n}(X)\setminus X)< \dim(X)
\stackrel{(\ref{P:dim_max})}{=}\dim(Y),\end{multline*}  as desired.
\end{proof}

\section{Small groups}\label{S:gps}

Every definable group $G$ in a geometric structure $M$ becomes
naturally a topological group with respect to a definable topology on $G$ 
(which need not coincide with the induced topology) \cite[Theorem 
7.1.10]{aM96}.  The study of 
groups in \cite{HP94} rely on the close relation between a real or 
	p-adically closed field and its algebraic closure. For a similar analysis of 
	groups for geometric topological fields, we introduce 
	the following notion.

\begin{definition}\label{D:stab_controlled} A geometric theory $T$ is 
\emph{stably controlled} if 
	there is a strongly minimal theory $T^0$ which eliminates quantifiers
	and imaginaries in a sublanguage $\LL^0\subseteq \LL$ such that:
	\begin{itemize}
		\item The theory $T$ contains $T^0_\forall$, that is, every model $M$
		of $T$ embeds as an $\LL^0$-substructure of some model $N$ of
		$T^0$. In particular, the $\LL^0$-definable closures
		of $M$ in the various models of $T^0$ are definably isomorphic
		(since $T^0$ eliminates quantifiers), and analogously for the
		$\LL^0$-algebraic closures.

		\item The underlying $\LL^0$-structure of a model $M$ of $T$ is
		$\LL^0$-definably closed in some (equivalently, every) model $N$ of
		$T^0$ containing $M$.
		\item In every model $M$ of $T$, the dimension given by the algebraic
		closure with respect to the language $\LL$ coincides with the
		$\LL^0$-Morley rank, computed inside the $\LL^0$-algebraic closure,
		which is strongly minimal.
	\end{itemize}
\end{definition}
		\begin{example}
		Real and $p$-adically closed fields in the ring language 
		are stably controlled by the
		strongly minimal theory ACF$_0$ \cite{HP94}.
	\end{example}

	\begin{lemma}\label{Lem:StablyTame} If a  geometric  
		theory of topological rings $T$ is stably controlled, then it is tame for pairs, that is,   
		in a dense pair $(M,E)$ of models of $T$, every
		$\LL^P$-definably closed set is special. 
	\end{lemma}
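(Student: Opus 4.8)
The plan is to show that, under the assumption that $T$ is stably controlled by a strongly minimal theory $T^0$, the $\LL^0$-dimension and the geometric dimension coincide closely enough that the $\LL^P$-definable closure of a set $A$ in the pair cannot increase the dimension over $E$. Concretely, let $A$ be an $\LL^P$-definably closed subset of $M$; we must verify that $\dim(\bar a/E)=\dim(\bar a/E\cap A)$ for every finite tuple $\bar a$ from $A$. Since $T$ has Skolem functions, $A$ is already $\LL$-definably closed, so the content is entirely about the interaction of $A$ with the predicate $E$.

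First I would reduce the statement to a transcendence-degree computation inside the $\LL^0$-algebraic closure. By the third clause of Definition \ref{D:stab_controlled}, the geometric dimension $\dim(\cdot)$ in $M$ agrees with $\LL^0$-Morley rank computed in the strongly minimal $\LL^0$-algebraic closure, and $\LL^0$ eliminates quantifiers and imaginaries, so the underlying $\LL^0$-structures of $M$ and of $E$ embed into a common monster model $\Omega$ of $T^0$. In that strongly minimal setting, Morley rank is additive and is controlled by $\LL^0$-algebraic closure, which satisfies exchange; the relevant independence is forking independence in $T^0$. So the claim $A\ind_{E\cap A}E$ becomes: $A$ is $\LL^0$-algebraically-independent from $E$ over $E\cap A$, where all closures are taken in $\Omega$.

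The key step is to produce this independence from the structure of the dense pair. Here I would use that $(M,E)$ is a dense pair: by Fact \ref{F:Dries}, the type of a special set over $E$ is determined by its quantifier-free $\LL^P$-type, and every model of $T$ sits inside one where the rank over $E$ is large (Remark \ref{R:small_fte}). The idea is that $E$ is an elementary $\LL$-substructure, hence also (via the embedding into $\Omega$) an $\LL^0$-algebraically closed-in-$M$ structure, so the $\LL^0$-algebraic closure of $E$ inside $\Omega$ meets $M$ exactly in (the $\LL$-definable, hence $\LL^P$-definable) closure of $E$. Take $\bar a$ in $A=\dcl^P(A)$; any element of $\acl^0(\bar a E)\cap M$ that is new over $\acl^0(E\cap A)$ would, by strong minimality and exchange, force $\bar a$ to drop rank over $E$ relative to $E\cap A$. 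But such an element would lie in $\dcl^P(A\cup E)\cap$ (something of bounded size), and I would argue that stable-domination-type considerations — really just the fact that $\LL^0$ is stable and $E$ is an $\LL$-elementary, $\LL^0$-algebraically-closed-in-$M$ submodel — prevent $\dcl^P(A)$ from seeing any $\LL^0$-algebraic dependence over $E$ that it did not already see over $E\cap A$. Formally: choose $\bar a$ in $A$ with $\dim(\bar a/E)<\dim(\bar a/E\cap A)$, pass to a sufficiently saturated pair, and realize a copy $\bar a'$ of $\tp^{\LL}(\bar a/E\cap A)$ that is geometrically independent from $E$ over $E\cap A$; since $\dcl(\bar a E\cap A)$ is special (Remark \ref{R:special}), Fact \ref{F:Dries}(\ref{F:special}) gives an $\LL^P$-automorphism over $E\cap A$ moving $\bar a$ to $\bar a'$, contradicting $\dim(\bar a/E)<\dim(\bar a'/E)=\dim(\bar a/E\cap A)$ once we check $\bar a'$ can be taken inside the home model.

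I expect the main obstacle to be exactly this last point: exhibiting, inside the fixed (saturated) pair $(M,E)$, a realization $\bar a'$ of $\tp^{\LL}(\bar a/E\cap A)$ that is genuinely $\ind$-independent from $E$ over $E\cap A$, i.e.\ lifting the abstract independence in $\Omega$ back into $M$ while staying coherent with the predicate. This is where one must use that $(M,E)$ is saturated and that the rank of $M$ over $E$ is at least $\kappa$ (Remark \ref{R:small_fte}(1)), together with the fact that geometric/\,$\LL^0$-independence over a special base is witnessed by an $\LL$-type that is consistent with "being disjoint from $E$", using the density/codensity of $M\setminus E$. Once a suitable such $\bar a'$ is found, Fact \ref{F:Dries}(\ref{F:special}) closes the argument, and stable control is used solely to translate "geometric dimension" into the well-behaved, additive, exchange-satisfying $\LL^0$-Morley rank needed to run the exchange-argument that a single new algebraic element drops the dimension.
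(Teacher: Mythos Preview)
Your automorphism approach has a genuine circularity. To invoke Fact~\ref{F:Dries}(\ref{F:special}) and produce an $\LL^P$-automorphism over $E\cap A$ taking $\bar a$ to $\bar a'$, you would need \emph{both} $\dcl(\bar a,E\cap A)$ and $\dcl(\bar a',E\cap A)$ to be special sets with the same quantifier-free $\LL^P$-type: the back-and-forth system underlying Fact~\ref{F:Dries} operates only between special sets. The second set is special by your choice of $\bar a'$, but the first is precisely what is in doubt --- you are assuming for contradiction that $\bar a$ fails independence from $E$ over $E\cap A$, so $\dcl(\bar a,E\cap A)$ is \emph{not} special, and your appeal to Remark~\ref{R:special} for it is unjustified (that remark needs one of the two pieces to already be special, and $\bar a$ is not known to be). The earlier, vaguer paragraph does not escape this either: the sentence ``such an element would lie in $\dcl^P(A\cup E)\cap\ldots$'' does not land anything inside $A$, and without a concrete invariant living in $A$ you cannot finish.

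The paper bypasses the back-and-forth entirely by using canonical bases in $T^0$. For $a\in A$, set $q=\tp^0(a/\acl^0(E))$; elimination of imaginaries in $T^0$ gives $\text{cb}(q)\in\dcl^0(E,a)\subseteq M$ (using that $M$ is $\LL^0$-definably closed). Then $\text{cb}(q)\in\acl^0(E)\cap M\subseteq\dcl^{\LL}(E)=E$. Every $\LL^P$-automorphism fixing $A$ pointwise permutes $E$ setwise, hence fixes $q$ and $\text{cb}(q)$; thus $\text{cb}(q)\in\dcl^P(A)=A$, so $\text{cb}(q)\in E\cap A$, giving $\text{RM}^0(a/E)=\text{RM}^0(a/E\cap A)$, and stable control translates this back to $\dim$. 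This is the precise form of the ``stable-domination-type consideration'' you gestured at: the canonical base is the concrete finite tuple witnessing where the dependence on $E$ lives, and $\LL^P$-definable closedness of $A$ is used only to place that tuple inside $A$, never to run a back-and-forth.
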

In particular, the intersection of $\LL$-definably closed special subsets
	of $M$ is again special, by Fact $\ref{F:Dries}(\ref{F:dcl_special})$.
	\begin{proof} 

			Given an $\LL^P$-definably closed subset $A$ of $M$, we need to
			show that $A\ind_{A\cap E} E$. Choose a model $K$ of the strongly minimal 
			$\LL^0$-theory $T^0$ controlling $T$ such that $M$ embeds into $K$ as an 
			$\LL^0$-substructure. Denote by $\dcl^0$ and $\acl^0$ the 
			definable and algebraic closures in $K$. 
			
			Given a tuple $a$ in $A$, elimination of imaginaries yields that the $\LL^0$-type 
			$q$ of 
			$a$ over $\acl^0(E)$ is stationary. Its canonical base 
			$\text{cb}(q)$ lies in $\dcl^0(E,a) \subseteq M$. Thus 
			$\text{cb}(q)$ 
			lies in $\acl^0(E)\cap M\subseteq \dcl^{\LL}(E)=E$, for $E$ is an 
			elementary $\LL$-substructure of $M$. Any $\LL^P$-automorphism of 
			$M$ fixing 
			$A$ pointwise must permute $E$ as a set, so $\text{cb}(q)$ lies in 
			$\dcl^P(A)=A$. Hence, the restriction of $q$ to $E\cap A$ is stationary, and 
			\[\dim(a/E)=\text{RM}^0(a/E)=\text{RM}^0(a/E\cap A)=\dim(a/E\cap A),\]
			as desired. 
	\end{proof}
	
	If the theory $T$ of $M$ is stably controlled with respect to the strongly minimal theory 
	$T^0$, a verbatim adaptation of
\cite[Theorem A]{HP94} yields that there are:
\begin{itemize}
\item a connected group $(H, \circ)$ which is $\LL^0$-definable in $T^0$ with parameters
from $M$;
\item open $\LL_M$-definable subsets $U$, $V$ and $W$ of
$G$;
\item open $\LL_M$-definable subsets $U'$, $V'$ and $W'$ of the group
$H(M)$;
\item $\LL_M$-definable homeomorphisms $$f:U\rightarrow U'\,,\,
g:V\rightarrow V' \text{ and } h:W\rightarrow W';$$
\end{itemize}
such that
\[ f(x)\circ g(y)=h(x\ast y) \text{ for every } (x,y) \text{
in } U\times V. \]
\noindent In particular, the group law on $G$ is locally $\LL_0$-definable. In the case of
$p$-adically or real closed fields, we conclude that the group law on $G$ is locally
algebraic.

We will prove an analogon for certain small groups definable in dense pairs.

\begin{theorem}\label{T:main} Let $T$ be a geometric NIP theory of 
topological rings, which is stably controlled with respect to the strongly 
minimal $\LL^0$-theory $T^0$. Consider a dense pair $(M,E)$ of models of $T$, with $E=P(M)$, in
the language $\LL^P= \LL\cup \{P\}$.  If $(G, \ast)$ is
  an $\LL^P$-definable group over a special set $D\subseteq M$ such that
  $G\subseteq E^k$ for some $k$, then there are:
\begin{itemize}
\item  an $\LL_E$-definable
  subset $Z$ of $G$ of dimension $\dim(G)$;
\item an $\LL^0$-definable connected  group $(H, \circ)$ over $E$;
\item relatively open $\LL_E$-definable subsets $U$, $V$ and $W$ of
  $Z$ with $U*V\subseteq W$;
\item relatively open $\LL_E$-definable subsets $U'$, $V'$ and $W'$ of
  $H(E)$ with $U'\circ V'\subseteq W'$;
\item $\LL_E$-definable homeomorphisms $$f:U\rightarrow U'\,,\,
  g:V\rightarrow V' \text{ and } h:W\rightarrow W';$$
\end{itemize}
such that
	\[ f(x)\circ g(y)=h(x\ast y) \text{ for every } (x,y) \text{
            in } U\times V. \]
\end{theorem}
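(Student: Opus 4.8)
The plan is to adapt the strategy of Hrushovski and Pillay \cite{HP94}: extract a group configuration from generic elements of $G$, recover a connected group $H$ in the controlling strongly minimal theory $T^0$ via Hrushovski's group configuration theorem, and then transport the local group structure back to $G$. Since $G$ is small, it carries no $\LL$-definable manifold structure; the substitutes are the honest-definition dimension of Section \ref{S:honesty} --- which, by stable control, agrees on tuples from $M$ with the $\LL^0$-Morley rank $\text{RM}^0$ computed in the $\LL^0$-algebraic closure --- together with the open-core results of Section \ref{S:dense}.

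Put $d=\dim(G)$. First I would produce generic elements of $G$ \emph{inside} $E^k$: by Lemma \ref{L:dim_subsets} there is an $\LL_E$-definable subset $Z_0\subseteq G$ with $\dim(Z_0)=d$, and applying cell decomposition in $T$ to $Z_0$ together with density of $E$ in $M$ one finds $a\in E^k\cap Z_0$ with $\dim(a/E)=d$; iterating yields generic independent $a,b\in E^k$ of $G$ over $E$ and a further $c\in E^k$ generic over $E\cup\{a,b\}$. As $\ast$ is total on $G$ and $G\subseteq E^k$, the elements $a\ast b$, $c\ast a$, $c\ast a\ast b$ again lie in $E^k$, and a routine computation with the geometric dimension over $E$ (additive and satisfying exchange, by the axioms of a geometric theory) shows that $(a,b,a\ast b,c,c\ast a,c\ast a\ast b)$ is a group configuration with respect to $\dim$. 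Since $\dim$ equals $\text{RM}^0$ on these tuples, it is a group configuration of the strongly minimal theory $T^0$, realised inside a model $K\models T^0$ extending $M$.

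Hrushovski's group configuration theorem, applied in $T^0$ (which is $\omega$-stable and eliminates imaginaries), then yields a connected $\LL^0$-definable group $H$ together with a generic isomorphism $\phi\colon G\dashrightarrow H$ satisfying $\phi(x\ast y)=\phi(x)\circ\phi(y)$ for generic independent $x,y\in G$. Both $H$ and $\phi$ can be taken $\LL^0$-definable over $E$: since the configuration points were chosen in $E^k$, the canonical parameters produced by the theorem lie in $\dcl^0$ of these points, hence in $\dcl^0(E)=E$ (because $E\models T$ is $\dcl^0$-closed in $K$ and $T^0$ eliminates imaginaries) --- this is precisely where the hypothesis $G\subseteq E^k$ is used, and it parallels the computation $\acl^0(E)\cap M\subseteq\dcl^{\LL}(E)=E$ from the proof of Lemma \ref{Lem:StablyTame}. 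As $E$ is $\dcl^0$-closed, $\phi$ sends a generic $E$-point $a$ of $G$ to $\phi(a)\in\dcl^0(E,a)=E$, i.e. into $H(E)$, and $\dim H=d$. Replacing $H$ by its connected component and using Lemma \ref{L:dim_subsets} once more, I would fix an $\LL_E$-definable subset $Z\subseteq G$ of dimension $d$ inside the domain of $\phi$, and then, by cell decomposition in $T$ and generic injectivity of $\phi$, shrink $Z$ so that $\phi\restr{Z}$ is a homeomorphism onto a relatively open subset of $H(E)$.

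The main obstacle is the last step: upgrading the generic identity $\phi(x\ast y)=\phi(x)\circ\phi(y)$ to one valid on genuinely relatively open subsets of $Z$. The graph of $\ast$ restricted to $Z$ is small, so the "agrees off a small set" statements are vacuous and a dense--codense discrepancy must be excluded by hand. I would transport the group law of $H$ through $\phi$ to the partial $\LL^0_E$-definable operation $m(x,y)=\phi^{-1}(\phi(x)\circ\phi(y))$ on $Z$, which is honestly associative on a relatively open subset of $Z$ (being a pull-back of the associative law of $H$) and coincides with $\ast$ on all generic independent pairs. A Weil-style group-chunk argument --- expressing an arbitrary element of a small box as a product of generic elements and using associativity of both $m$ and $\ast$ --- then forces $\ast$ and $m$ to coincide on some relatively open $\LL_E$-definable box $U\times V\subseteq Z^2$ with $U\ast V\subseteq Z$; here the boundary-dimension statement of Proposition \ref{P:dim_tame} and the open-core Theorem \ref{T:opendef} furnish the dimension bookkeeping that replaces the missing manifold structure, and guarantee that the box can be found $\LL_E$-definable in $T$. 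Taking $f=\phi\restr{U}$, $g=\phi\restr{V}$, $W=Z$ and $h=\phi\restr{Z}$, one obtains $f(x)\circ g(y)=\phi(x)\circ\phi(y)=\phi(x\ast y)=h(x\ast y)$ for all $(x,y)\in U\times V$; finally $U'=f(U)$, $V'=g(V)$, $W'=h(W)$ are relatively open in $H(E)$ with $U'\circ V'\subseteq W'$, which completes the proof.
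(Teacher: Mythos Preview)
Your overall strategy matches the paper's (group configuration in $T^0$ via \cite{HP94}), but the step you yourself flag as ``the main obstacle'' is where a concrete idea is missing, and the paper's device for it does not appear in your sketch. The paper does not compare $*$ directly with an $\LL^0$-operation pulled back through $\phi$. Instead it first takes an honest definition $Z_Y$ (in a saturated $\LL^P$-extension $N$) of the \emph{graph} $Y=\{(a,b,a\ast b):a,b\in G\}$. Because $Z_Y\subseteq\{(a,b,a\ast b):a,b\in G(P(N)^k)\}$, the set $Z_Y$ is itself the graph of an $\LL$-definable partial map $\odot$ over a finite $A_0\subseteq P(N)$, whose domain $Z_{1,2}$ has dimension $2\dim(G)$ and on which $\odot$ agrees with $\ast$. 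One then runs the configuration with a generic $(a,b)\in Z_{1,2}$ over $D_2\supseteq A_0$. The decisive point is that the set
\[
\{(x,y)\in Z_{1,2}\cap(U\times V)\cap\odot^{-1}(W):f(x)\circ g(y)=h(x\odot y)\}
\]
is $\LL_{D_2}$-definable, not merely $\LL^P$-definable; since it contains the generic $(a,b)$, the ordinary genericity argument in the geometric theory $T$ yields an open box $U_1\times V_1$ inside it, and there $U_1\ast V_1=U_1\odot V_1$ automatically. Pulling back along $M\preceq^P N$ gives everything over $E$.

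Your substitute compares $*$ with $m(x,y)=\phi^{-1}(\phi(x)\circ\phi(y))$ directly; the agreement set is then only $\LL^P$-definable, and containing a generic point does not by itself produce an $\LL_E$-definable relatively open box. The Weil-chunk manoeuvre you describe needs identities like $m(c^{-1},c\ast x)=x$ for non-generic $x$, i.e.\ that $\phi$ be homomorphic at such points, which is what you are proving; so as written the argument is circular. Two smaller issues: writing $\dim(a/E)=d$ for $a\in E^k$ is literally $0$; you mean dimension over a finite parameter set, and the paper arranges this cleanly by passing to $N$ and invoking Lemma~\ref{L:dim_hon} to find generics in $P(N)$. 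Also, the group-configuration output is three separate interdefinabilities at the vertices $a,b,c$, giving maps $f,g,h$ as in \cite[Proposition 3.1]{HP94}; the paper keeps them distinct and never needs a single $\phi$.
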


\begin{proof}

  The proof is an adaptation of the analogous result \cite[Theorem
  A]{HP94}, so we will only remark the relevant steps, to avoid
  repetitions.  We may assume that the special set $D$ is
  finite-dimensional and $\LL^P$-definably closed, by Fact
  $\ref{F:Dries}(\ref{F:dcl_special})$.

  Let
	\[Y=\{(a,b,a\ast b):a,b\in G\} \subseteq G^3, \]
	\noindent be the graph of the group operation $\ast$. The set $Y$ is
	$\LL^P$-definable over $D$, so choose an honest definition $Z_Y$
	of $Y$ defined over a finite set $A_0$ of $P(N)$, where $N$
	is a sufficiently saturated $\LL^P$-elementary extension $N$
	of $M$, by Fact \ref{F:honest}. Observe that $A_0$ is special in $N$.

	Fact \ref{F:honest} yields that the image $Z_i$ of the
        $i^\text{th}$-projection $\pi_i:Z_Y \rightarrow G(P(N)^k)$ is
        an honest definition of $G$, so
	\[ \dim(Z_1)=\dim(Z_2)=\dim(Z_3)=\dim(G),\]

        \noindent by Corollary \ref{L:good_dim}. Since
        $Z_Y \subseteq \{(a,b,a\ast b):a,b\in G(P(N)^k)\}$, we conclude
        that for each pair $(a,b)$ in
        $$Z_{1,2}:=(\pi_1\times \pi_2)(Z_Y)\subseteq G(P(N)^k)\times
        G(P(N)^k),$$ there is a unique $c$ in $Z_3(P(N)^k)$ such that
        $(a,b,c)$ belongs to $Z_Y$. Hence, the set $Z_Y(P(N)^{3k})$ is
        the graph of an $\LL$-definable function $\odot$ over $A_0$:

\[\begin{array}{cccc} Z_{1,2}(P(N)^{2k})   & \to & Z_3(P(N)^k) \\[1mm]
 (a,b)& \mapsto &a\odot b.
  \end{array}  \]
Note that $Z_{1,2}(P(N)^{2k}) $ is an honest definition of $G\times G$, so
$\dim(Z_{1,2})=2\dim(G)$.

Since $M\preceq^P N$, the set $M$ is special in $N$. In particular, the
set $D$ remains special in $N$, and so is $D_1=\dcl(D,A_0)$, by the Remark
\ref{R:special}. Clearly $D_1$ is finite-dimensional. Observe that  $Z_Y$,
the projections $Z_1$, $Z_2$ and $Z_3$, as well as the map $\odot$ are
all $\LL$-definable over $D_1$.

By Lemma \ref{L:dim_hon}, choose a generic point $(a,b)$ in
$Z_{1,2}(P(N)^{2k})$ with $c=a\odot b$ in $Z_3$, so
 $$\dim(a,b/D_1)=2\dim(G).$$
 \noindent The pair $(a,b)$ is the starting point of the group
 configuration theorem, working in the $\LL^0$-algebraic closure $\acl^0(P(N))$ with 
 respect to the theory $T^0$.  

 Given any two elements $x$ and $y$ of $G(P(N)^k)$, the set
 $D_1\cup\{x,y\}$ is again special in $N$, by the Remark \ref{R:special}.
 Let $P(D_1)$ denote $D_1\cap P(N)$. By
 Fact $\ref{F:Dries}(\ref{F:dcl_special})$, we have that
 $P(\dcl^P(D_1\cup\{x,y\})= \dcl(P({D_1}), x,y)$, so the element
 $x\ast y$ lies in $\dcl(P(D_1), x,y)$. Moreover, given any special set
 $D'\supseteq D$, if $x$ and $y$ are both
 generic in $G(P(N)^k)$ and independent over $D'$ then so is $x\ast y$, which is independent from each
 of the factors over $D'$.  
 
 Because of this, the same arguments as in \cite[Proposition
 3.1]{HP94} carry on to obtain a finite-dimensional subset $D_2$ of
 $P(N)$ containing $P(D_1)$, a connected group
 $H\subseteq \acl^0(P(N))^\ell$ which is $\LL^0$-definable over $D_2$ in the theory 
 $T^0$, and
 points $a'$, $b'$ and
 $c'$ of $H(P(N)^\ell)$, such that

\begin{itemize}
\item  $a'$ and $b'$ are $D_2$-generic independent elements of
$H(P(N)^\ell)$;
 \item $a' \cdot b' = c'$;
\item $\dcl(aD_2) = \dcl(a'D_2)$, $\dcl(bD_2) = \dcl(b'D_2)$ and
$\dcl(cD_2) =
\dcl(c'D_2)$.
 \end{itemize}

 \noindent In particular, there is an $\LL_{D_2}$-definable bijection
 $f$ from a subset of $G(P(N)^k)$ to a subset of $H(P(N)^\ell)$, with
 $f(a)=a'$.  We now proceed as in \cite[Lemma 4.8]{HP94}. Denote by
 $Z$ the $\LL$-definable set $Z_1\cup Z_2\cup Z_3$. Note that
 $Z(P(N)^k)\subseteq G(P(N)^k)$, and all three elements $a$, $b$ and
 $c=a\ast b$ lie in $Z$. Since $a$ in $Z$ and $a'$ in $H(P(N)^\ell)$
 are each generic over $D_2$, there are relatively open $\LL_{D_2}$-definable
 neighbourhoods $U$ of $a$ in $Z$ and $U'$ of $a'$ in $H(P(N))$, such
 that $f:U\rightarrow U'$ is a homeomorphism.  Similarly, there exists
 $\LL_{D_2}$-definable homeomorphisms $g:V\rightarrow V'$ and
 $h:W\rightarrow W'$ with $g(b)=b'$ and $h(c)=c'$, for some relatively open
 $\LL_{D_2}$-definable open subsets $V$ and $W$ of $Z$, resp. $V'$ and
 $W'$ of $H(P(N))$.

  Now, the function $\odot$ is $\LL_{A_0}$-definable and
 $A_0\subseteq P(D_1)\subseteq D_2$. The
 $\LL_{D_2}$-definable set
$$\{(x,y)\in Z_{1,2}\cap (U\times V)\cap \odot^{-1}(W): f(x)\circ
g(y)=h(x \odot y)\} \subseteq Z\times Z,$$
\noindent  contains the point $(a,b)$, which has dimension $2\dim(Z)$ over
$D_2$. There are open
$\LL_{P(N)}$-definable neighbourhoods $U_1$ of $a$, resp. $V_1$ of $b$,
in $Z$ such that $U_1\times V_1$ is contained in the above set.
Hence, we have that
$$U_1\subseteq U\,,\, V_1\subseteq V \text{ and } U_1 * V_1=U_1\odot
V_1\subseteq W$$
\noindent with
$f(x)\circ g(y)=h(x\odot y)=h(x*y)$ for every $(x,y)$ in $U_1\times
V_1$.
Finally, rename $U_1$ and $V_1$ by $U$ and
$V$, respectively. Since $M\preceq^P N$, we can obtain the
corresponding definable sets and homeomorphisms in our pair $(M,E)$,
all definable over $E$, with $Z\subseteq G$ of dimension $\dim(G)$.
\end{proof}

If the theory $T$ is an o-minimal expansion of a real closed field, we can provide an 
intrinsic definition of the local isomorphism of Theorem \ref{T:main}. 
	
Recall that (the theory 
of) a weakly minimal expansion of an ordered abelian group $M$ 
is 
\emph{non-valuational} if there are no 
proper definable subgroups of $M$ (cf. \cite[Lemma 1.5]{W08}). Every o-minimal theory is 
non-valuational, since they are definably complete, so definable cuts are realized. A typical 
example of a valuational theory is the theory of a 
non-archimedian real closed field 
with a distinguished predicate for the convex hull of the integers. 

Let $(M,E)$ now be a dense pair of models of $T$ and fix a special subset $D\subseteq 
M$. The (weak) \emph{Shelah's expansion} \cite{sS14} $E_\text{ext}$ of $E$ is the 
structure  in the language $\LL^{\text{ext}}$ consisting of relational symbols $R_X$ 
for each $\LL_D^P$-definable subset $X$ of $E^n$, as $n$ varies,  whose universe is $E$
 and the interpretation in $E_\text{ext}$ of each $R_X$ is $X$.  Since the projection of an 
 $\LL_D^P$-definable subset  of $E^n$ is again $\LL_D^P$-definable, the theory 
 $\text{Th}(E_{\text{ext}})$ eliminates quantifiers. Thus, definable sets in  $E_{\text{ext}}$ 
 are  exactly the $\LL_D^P$-definable sets, which are exactly the 
 traces on $E$ of  $\LL_D$-definable subsets of $M$, by Fact $\ref{F:Dries} 
 (\ref{F:induced})$. In particular, the structure $E_\text{ext}$ is a weakly o-minimal 
 \cite{BP98}. Furthermore, it is non-valuational: given a subgroup $H$ of $(E,+, \leq)$ 
 definable in $E_{\text{ext}}$, the set $\text{cl}_M(H)$ is a subgroup of $(M,+)$ and it is 
 $\LL$-definable, by Theorem \ref{T:opendef}. Since $M$ is o-minimal,  it follows that 
 $\text{cl}_M(H)$ is trivial or $\text{cl}_M(H)=M$. As the characteristic is $0$, the only 
 finite additive subgroup is the trivial one. Assume hence that $\text{cl}_M(H)=M$, so 
 $\dim(E\setminus H) < \dim(H)=1$, by Proposition 
 $\ref{P:dim_tame}$. In particular, the set $E\setminus H$ 
 has dimension $0$, so it is finite, which yields that $E=H$, since 
 $E$ is divisible. 
 
 Hence, the (honest) dimension defined in Definition \ref{D:dim_hon} for the 
 non-valuational weakly o-minimal structure $E_{\text{ext}}$ is a dimension in the sense 
 of  \cite{W12}, for it coincides with the  topological dimension, by Lemma 
 \ref{L:proj_dim_coordinates}. Theorem \cite[Theorem 4.6]{W12} yields that every 
 $\LL_D^P$-definable  group $G\subseteq E^k$ (which are $\LL_{\text{ext}}$-definable) 
 has a topology $\tau$ which turns it into a topological group. Specifically, there is an 
 $\LL_D^P$-definable $\tau$-open subset $L$ of $G$ such that the topology $\tau$ 
 restricted to $L$  coincides with the topology induced from $E^k$ and  finitely many 
 translates of $L$  cover $G$, with $\dim(G\setminus L)<\dim(G)$. 
 
 We can now conclude the following result:
 
 \begin{cor}\label{C:Wenzel} Let $T$ be an o-minimal expansion of a real closed field, 
 which is stably 
controlled by the strongly minimal $\LL^0$-theory $T^0$. Consider a dense
pair $(M,E)$ of models of $T$, with $E=P(M)$, in
the language $\LL^P= \LL\cup \{P\}$.  If $(G, \ast)$ is
  an $\LL^P$-definable group over a special set $D\subseteq M$ such that
  $G\subseteq E^k$ for some $k$, then there is an 
  $\LL^P$-definable local isomorphism 
  between a neighbourhood of 
  the identity in $G$ and a neighbourhood of the identity in 
  $H(E)$, for some connected group 
  $H$ which is $\LL^0$-definable over $E$. 
\end{cor}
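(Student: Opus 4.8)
The plan is to upgrade the ``generic'' local isomorphism furnished by Theorem \ref{T:main} to an honest local isomorphism near the identity, by transporting it along left translations and controlling topologies through the definable group topology on $G$ coming from \cite[Theorem 4.6]{W12}, as recalled in the discussion above. First I would apply Theorem \ref{T:main} to obtain an $\LL_E$-definable $Z\subseteq G$ with $\dim(Z)=\dim(G)$, a connected $\LL^0$-definable group $(H,\circ)$ over $E$, relatively open $\LL_E$-definable neighbourhoods $U\ni a$, $V\ni b$, $W\ni c$ in $Z$ (with $c=a\ast b$ and $a$ generic) and $U',V',W'$ in $H(E)$, together with $\LL_E$-definable homeomorphisms $f\colon U\to U'$, $g\colon V\to V'$, $h\colon W\to W'$ with $f(x)\circ g(y)=h(x\ast y)$ on $U\times V$, $f(a)=a'$, $g(b)=b'$ and $h(c)=a'\circ b'$. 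At the same time I would fix the $\LL^P$-definable topology $\tau$ on $G$, obtained from the weak Shelah expansion $E_{\text{ext}}$ via \cite[Theorem 4.6]{W12}: $(G,\tau)$ is a topological group, there is a $\tau$-open $\LL^P$-definable $L\subseteq G$ on which $\tau$ coincides with the topology induced from $E^k$, and $\dim(G\setminus L)<\dim(G)$.

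The computational heart is to show that $\phi(u):=f(a)^{-1}\circ f(a\ast u)$, which is $\LL^P$-definable and defined for those $u$ with $a\ast u\in U$, is a local homomorphism at $e_G$. Since $\phi(e_G)=e_H$, this amounts to the identity $f(a\ast u\ast v)=f(a\ast u)\circ f(a)^{-1}\circ f(a\ast v)$ for $u,v$ close to $e_G$. To prove it I would feed the relation $f(x)\circ g(y)=h(x\ast y)$ the substitutions $(x,y)=(a\ast u,b)$ and $(x,y)=(a,u\ast b)$, which are legitimate for $u$ small since $V$ is a $\tau$-neighbourhood of $b$ and right translation by $b$ is $\tau$-continuous, thereby obtaining $g(u\ast b)=f(a)^{-1}\circ f(a\ast u)\circ b'$; then, using $(x,y)=(a\ast u,v\ast b)$ and $(x,y)=(a\ast u\ast v,b)$ together with this identity, I would compare the two resulting expressions for $h(a\ast u\ast v\ast b)$ and cancel $b'$. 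As $f$ is a homeomorphism and left translations in $G$ and $H$ are homeomorphisms, $\phi$ is injective and open near $e_G$, hence a local isomorphism onto a neighbourhood of $e_H$ in $H(E)$, the latter carrying the definable group topology of the o-minimal structure $E$, which agrees with the induced topology on a large open set, in particular near $a'$.

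The main obstacle is that Theorem \ref{T:main} delivers homeomorphisms for the topology induced from $E^k$, whereas the translations $u\mapsto a\ast u$ used to recentre at $e_G$ are continuous only for $\tau$; these must be reconciled. The remedy is to choose the generic point $a$ inside $L\cap Z$, which has dimension $\dim(G)$ because $\dim(G\setminus L)<\dim(G)$; since $E_{\text{ext}}$ is weakly o-minimal and its honest dimension (Definition \ref{D:dim_hon}) coincides with the topological dimension (Lemma \ref{L:proj_dim_coordinates}), the full-dimensional definable set $Z\cap L$ has non-empty interior in $L$, so $Z$ has non-empty $\tau$-interior, which near $a$ agrees with the induced topology. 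Shrinking $U$ to a $\tau$-open neighbourhood of $a$ contained in $Z\cap L$, the domain $\{u:a\ast u\in U\}$ of $\phi$ becomes a $\tau$-neighbourhood of $e_G$ on which $\tau$ is the induced topology, and $\phi$ restricts to the desired $\LL^P$-definable local isomorphism between a neighbourhood of $e_G$ in $G$ and a neighbourhood of $e_H$ in $H(E)$. Finally, one checks that $Z$, the maps $f,g,h$, the topology $\tau$ and the set $L$ are all $\LL^P$-definable over suitable parameters, so that $\phi$ is as well.
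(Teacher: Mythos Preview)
Your approach is essentially the paper's: invoke Theorem \ref{T:main}, use Wencel's group topology $\tau$ together with the large $\tau$-open set $L$ and a dimension argument to make $U$ a $\tau$-neighbourhood of a generic $a$, and then translate by $a$ to the identity --- the paper does exactly this (shrinking $U$, $V$, $W$ into $L$, then re-choosing a generic pair $(a,b)$) and cites \cite[Corollary 4.9]{HP94} for the local-isomorphism identity, whereas you spell out the four substitutions into $f(x)\circ g(y)=h(x\ast y)$ directly. One minor point: your substitutions $(a,u\ast b)$, $(a\ast u,v\ast b)$, $(a\ast u\ast v,b)$ also require $V$ and $W$ to be $\tau$-neighbourhoods of $b$ and $c$, which you assert in the second paragraph but only argue for $U$ in the third; the identical shrinking (the paper says ``similarly, so are the sets $V$ and $W$'') handles this.
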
  

\begin{proof} 
	Enlarging $D$, we may assume that all the sets and maps in Theorem \ref{T:main} are 
	definable over the finite-dimensional special set $D$. Let now $\tau$ be the topology of 
	$G$ described above and $L\subseteq G$ be the $\LL^P_D$-definable $\tau$-open  
	subset of $G$ such that $\tau$ restricted to $L$ coincides with the topology induced 
	from $E^k$ and  finitely many 
	translates of $L$  cover $G$, with $\dim(G\setminus L)<\dim(G)$. 
	
 We first show that the subset $U\cap L$ has non-empty interior in the set $U$ obtained 
 in Theorem  \ref{T:main}. Since  \[ U=(U\cap L) \cup (U\setminus L),\]
 we have that $\dim(U\cap L)=\dim(U)=\dim(G)$, for $\dim(G\setminus L)<\dim(G)$. 
 By Proposition $\ref{P:dim_tame} (\ref{P:dim_topcl})$, the dimension of the set 
 $\cl_{M^n}(U\setminus L)$ equals $\dim(U\setminus L)<\dim(G)$. Now, 
 
 \[  (U\cap L)\setminus \text{int}_U(U\cap L) \subseteq \cl_{M^n}(U\setminus 
 L),  \]
 so $\dim(\text{int}_U(U\cap L))=\dim(U\cap L)$, hence $\text{int}_U(U\cap L)\neq 
 \emptyset$, as desired. Observe that  $\text{int}_U(U\cap L)$ remains relatively open in 
 $Z$, since $U$ was.  After renaming, we may hence assume that $U$ is 
 $\LL_D^P$-definable and open in $L$.  Similarly, so are the sets $V$ and $W$, after 
 renaming. 
 
 Note that the topological dimension in  the non-valuational weakly o-minimal structure 
 $E_{\text{ext}}$ agrees with the dimension determined by the pregeometry induced by 
 the definable closure $\text{dcl}^P(-,D)$ on $E$, by  
 \cite[Theorem 4.6]{W12}.  
 
 Pick a pair $(a,b)$ in $U\times V$  such that $\dim^P(a,b|D)=2\dim(G)$, and set 
 $c=a\ast b$. Since $G$ is a topological group, we may assume that $U^{-1}*c\subseteq 
 V$, so  $U*U^{-1}*c\subset W$.  Set $U_1$ the  $\LL^P_D$-definable $\tau$-open 
 neighbourhood $U^{-1}\ast a$ of the identity of $G$, and $U'_1$ the $\LL_E$-definable 
 open neighbourhood $(U')^{-1} \circ a'$ of the identity of $H(E^k)$. As shown in 
 \cite[Corollary 4.9]{HP94}, the $\LL^P_D$-definable map
\[\begin{array}{ccc}
U_1  & \to & U'_1 \\[1mm]
 x^{-1}\ast a &\mapsto & f(x)^{-1}\circ a'
\end{array} \]
 is a homeomorphism and a local isomorphism, as required.
\end{proof}

\section{Digression: Elimination of imaginaries for the predicate}\label{S:EI}
Eleftheriou showed \cite{pE17} that, given any expansion of an
o-minimal ordered group by a new predicate, the induced structure on
the predicate has elimination of imaginaries whenever it satisfies
three natural (yet technical) conditions, which hold for all canonical
examples of pairs of an o-minimal expansion of an ordered group by a
predicate. His result allows to characterize small sets and it will  
be crucial in order to describe arbitrary 
small groups.  For the sake of the 
presentation, we will provide a short proof of Eleftheriou's result, which 
becomes somewhat easier in our particular setting.

Recall that the language $\LL$ expands the language of rings and that the
ring operations are continuous with respect to the definable topology. 
Furthermore, we require that:
\begin{itemize}
\item the geometric NIP theory $T$ eliminates 
imaginaries;
\item the intersection of two definably closed special sets is 
special: this is the case, whenever $T$ is stably controlled with 
respect to a strongly 
minimal theory $T^0$, by Lemma \ref{Lem:StablyTame}.
\end{itemize}

\begin{cor}\label{C:intersection}\cite[Lemma 3.1]{pE17}

	Let $A$ and $A'$ be two definably closed special subsets of $M$.
	Given a subset $Y$ of  $E^n$, which is both $\LL^P$-definable over
	both $A$ and $A'$, the set $Y$ is $\LL^P$-definable over $A\cap A'$.
\end{cor}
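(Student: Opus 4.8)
The plan is to reduce the statement to a statement about fixed points of the $\LL^P$-automorphism group of a sufficiently saturated model, using that $\LL^P$-definable closure of a special set behaves well (Fact~\ref{F:Dries}(\ref{F:dcl_special})). First I would pass to a monster model $\mathfrak{M} \succeq^P M$ and recall that, since $Y \subseteq E^n$ is $\LL^P$-definable over the special set $A$, its canonical parameter (the code of $Y$ in $T^P$, which exists because $T^P$ has elimination of imaginaries is \emph{not} automatic, so instead) I would work directly: an $\LL^P$-automorphism $\sigma$ of $\mathfrak{M}$ fixes $Y$ setwise whenever it fixes $A$ pointwise, and likewise whenever it fixes $A'$ pointwise. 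The goal is to show $Y$ is fixed by every $\sigma$ fixing $A \cap A'$ pointwise; granting that, a standard Galois-type argument (together with the fact that $A\cap A'$ is special, hence its $\LL^P$-definable closure is well understood) gives that $Y$ is $\LL^P$-definable over $\mathrm{dcl}^P(A\cap A')$.

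The key step is therefore the group-theoretic fact that the pointwise stabilizer of $A\cap A'$ in $\mathrm{Aut}^P(\mathfrak{M})$ is generated by (or at least, its action on the relevant small sets is controlled by) the stabilizers of $A$ and of $A'$. Here is where the hypothesis that the intersection of two definably closed special sets is special is essential: we know $A$, $A'$ and $A\cap A'$ are all special and $\LL$-definably closed (we may assume $A=\mathrm{dcl}^P(A)=\mathrm{dcl}(A)$ etc. by Fact~\ref{F:Dries}(\ref{F:dcl_special})). I would use the back-and-forth characterization of types of special sets (Fact~\ref{F:Dries}(\ref{F:special})): the quantifier-free $\LL^P$-type of a special set determines its type, so an element of $\mathrm{Aut}^P(\mathfrak{M}/A\cap A')$ can be decomposed by first moving $A$ to a generic $\LL$-conjugate $A^\ast$ over $A\cap A'$ with $A^\ast$ still special and $A^\ast \cap A' = A\cap A'$, and symmetrically for $A'$; concretely, one writes $\sigma = \sigma_1 \sigma_2$ where $\sigma_2$ fixes $A'$ and $\sigma_1$ fixes $\sigma_2(A)$, using amalgamation over $A\cap A'$ inside the monster. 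Since $Y \subseteq E^n$ and $Y$ is the trace on $E^n$ of an $\LL_A$-definable set (Fact~\ref{F:Dries}(\ref{F:induced})) and simultaneously of an $\LL_{A'}$-definable set, and the predicate $E=P(\mathfrak M)$ is fixed setwise by every $\LL^P$-automorphism, each such factor fixes $Y$.

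The main obstacle I expect is exactly the amalgamation/genericity bookkeeping: to decompose a given $\sigma \in \mathrm{Aut}^P(\mathfrak{M}/A\cap A')$ one must produce a copy $A^\ast$ of $A$ over $A\cap A'$ that is in ``general position'' with respect to $A'$ — meaning $A^\ast \ind_{A\cap A'} A'$ in the geometric sense and $A^\ast$ still special — and then argue that $A^\ast \cap A' = A \cap A'$ so that an automorphism sending $A$ to $A^\ast$ fixes $A\cap A' = A^\ast \cap A'$. Establishing that $A^\ast\cap A' = A\cap A'$ uses the exchange property and the hypothesis that the intersection of definably closed special sets is definably closed and special; establishing that such an $A^\ast$ exists and that the required automorphisms exist uses saturation together with Fact~\ref{F:Dries}(\ref{F:special}). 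Once the stabilizer of $A\cap A'$ is seen to be generated by conjugates of the stabilizers of $A$ and $A'$, invariance of $Y$ and hence $\LL^P_{A\cap A'}$-definability follow, completing the proof. An alternative, possibly cleaner, route is to code $Y$ by its honest definition from Section~\ref{S:honesty}: if $Z\subseteq P(N)^n$ is an honest definition of $Y$, it is $\LL$-definable, hence has a canonical parameter $e = \ulcorner Z \urcorner$ in $N$ since $T$ eliminates imaginaries; one then shows $e \in \mathrm{dcl}^P(A)\cap\mathrm{dcl}^P(A')$, and the hypothesis on intersections of special sets places $e$ in $\mathrm{dcl}^P(A\cap A')$, from which $\LL^P_{A\cap A'}$-definability of $Y = Z\cap E^n$ is immediate. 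Either way, the hypothesis ``intersection of definably closed special sets is special'' is the crux, and the delicate point is verifying the hypotheses of whichever descent lemma is invoked.
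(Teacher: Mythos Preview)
Both of your proposed routes have genuine gaps, and neither is the paper's argument.

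For the automorphism route, the decomposition $\sigma=\sigma_1\sigma_2$ with $\sigma_2\in\mathrm{Aut}^P(\mathfrak M/A')$ and $\sigma_1\in\mathrm{Aut}^P(\mathfrak M/\sigma_2(A))$ forces $\sigma_2\restr A=\sigma\restr A$, hence $\tp^P(A/A')=\tp^P(\sigma(A)/A')$; but $\sigma$ is only assumed to fix $A\cap A'$, so it may move $A'$ and destroy this equality of types. Passing to a generic copy $A^*\equiv_{A\cap A'}A$ is the right reflex in a stable theory, where forking independence and stationarity push the amalgamation through, but $T^P$ is unstable and Fact~\ref{F:Dries}(\ref{F:special}) gives nothing like stationarity of $\LL^P$-types over special sets. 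Indeed, the assertion ``definable over $A$ and over $A'$ implies definable over $A\cap A'$'' fails in arbitrary theories (take an equivalence relation with two infinite classes and one point in each), so some feature specific to subsets of $E^n$ must enter, and your sketch never isolates one. For the honest-definition route the obstruction is more immediate: honest definitions of $Y$ are not unique as sets (Corollary~\ref{L:good_dim} only pins down their dimension), so there is no well-defined code $\ulcorner Z\urcorner$ attached to $Y$, and in any case the parameters of a given $Z$ lie in $P(N)$ for an extension $N\succ^P M$, with no reason to fall into $\dcl^P(A)$.

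The paper instead runs an induction on $\dim(Y)$. The closure $\cl_{M^n}(Y)$ is $\LL$-definable over $A$ and over $A'$ by Theorem~\ref{T:opendef}, so elimination of imaginaries \emph{in the reduct $T$} places its canonical parameter in $\dcl(A)\cap\dcl(A')=A\cap A'$; hence $\cl_{M^n}(Y)\cap E^n$ is $\LL^P_{A\cap A'}$-definable. Writing $Y=(\cl_{M^n}(Y)\cap E^n)\setminus\bigl((\cl_{M^n}(Y)\setminus Y)\cap E^n\bigr)$ and observing that the subtracted set has strictly smaller dimension (Proposition~\ref{P:dim_tame}(\ref{P:dim_bound})), the inductive hypothesis finishes. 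The feature specific to $Y\subseteq E^n$ that makes everything work is precisely the tame open core: passing to the closure produces an $\LL$-definable set, so EI of $T$ --- which we do have --- applies directly, and no descent for $\LL^P$-definability is ever needed.
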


\begin{proof}
	The proof goes by induction on $\dim(Y)$. For $\dim(Y)=0$, it is
	obvious, since $Y$ is finite. Otherwise, if $\cl_{M^n}(Y)$ denotes the 
	topological closure of $Y$, note that
	$\cl_{M^n}(Y)\cap E^n$ is the disjoint union of $Y$ and the set
	$$(\cl_{M^n}(Y)\cap E^n)\setminus Y=(\cl_{M^n}(Y)\setminus Y) \cap
	E^n.$$
	\noindent Therefore, it suffices to show that both
	$\cl_{M^n}(Y)\cap E^n$ and $(\cl_{M^n}(Y)\cap E^n)\setminus Y$ are
	$\LL^P$-definable over $A\cap A'$. By Theorem \ref{T:opendef}, the
	set $\cl_{M^n}(Y)$ is $\LL$-definable over $A$ and $A'$, so it is
	$\LL$-definable over $A\cap A'$, by elimination of imaginaries of
	the theory $T$.

	 Finally, the dimension of the $\LL^P$-definable set
	$(\cl_{M^n}(Y)\setminus Y)\cap E^n$ is strictly less than $\dim(Y)$,
	by Proposition $\ref{P:dim_tame} (\ref{P:dim_bound})$. Since
	$(\cl_{M^n}(Y)\setminus Y)\cap E^n$ is both $\LL^P$-definable over
	$A$ and $A'$, it is definable over $A\cap A'$, by induction.
\end{proof}

 We say that a definable set $Z$ in a fixed complete theory
has a \emph{field of definition} if there is a smallest definably
closed subset $B$ over which $Z$ is definable. Clearly, a theory with
elimination of imaginaries has fields of definitions for all definable
sets.  If a theory has fields of definitions for all definable sets,
then it has weak elimination of imaginaries. Indeed, given an
imaginary $e=a/E$, let $B$ be a field of definition of the definable
class $E(x,a)$. By minimality, the set $B$ equals $\dcl(b)$, for some
finite tuple $b$ such that $e$ is definable over $b$. We need only
show that the type $\tp(b/e)$ is algebraic.  Observe that the
collection of realisations of $\tp(b/e)$ lies in the set $B$, so its
cardinality is bounded. Compactness yields that this set must be
finite, so $b$ is algebraic over $e$, as desired.

\begin{fact}\cite[Theorem C and Corollary 1.4]{pE17}\label{L:Pantelis}

	For every $\LL^P$-definable small subset $X \subseteq M^n$ over a
	special set $A$, there is an $\LL^P$-definable injection
	$X$ into $E^l$ over $A$, for some natural number $l$.
\end{fact}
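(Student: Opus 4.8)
The plan is to reduce the statement to the problem of coding, uniformly in $x\in X$, the fibre of a witnessing map over $x$ by a tuple living in a fixed cartesian power of $E$. Since $X$ is small, fix an $\LL_A$-definable map $h\colon M^m\to M^n$ with $X\subseteq h(E^m)$. Then $\widetilde X:=h^{-1}(X)\cap E^m$ is an $\LL^P$-definable subset of $E^m$ over $A$, the restriction $h\restr{\widetilde X}\colon\widetilde X\to X$ is surjective, and $X$ is in $\LL^P_A$-definable bijection with the quotient of $\widetilde X$ by the $\LL^P_A$-definable equivalence relation $e\sim e'\iff h(e)=h(e')$. Hence it suffices to produce, for each $x\in X$, a tuple $c_x\in E^l$ (with $l$ fixed) that depends $\LL^P_A$-definably on $x$ and codes the fibre $F_x:=h^{-1}(x)\cap E^m$; since $F_x\neq\emptyset$, any $e\in F_x$ satisfies $h(e)=x$, so such a coding is automatically injective in $x$, and patching the finitely many cells arising from a cell decomposition of $h$ (using the index of the cell to keep the pieces disjoint) yields one single $E^l$.

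To obtain a candidate for $c_x$ I would invoke the field of definition. Because $T$ is tame for pairs, $\dcl^P(A,x)$ is special and agrees with $\dcl(A,x)$ by Fact $\ref{F:Dries}(\ref{F:dcl_special})$, so $F_x$ is $\LL^P$-definable over the definably closed special set $\dcl(A,x)$. By Corollary \ref{C:intersection}, $F_x$ has a field of definition, i.e.\ a smallest definably closed special set $B_x$ over which it is $\LL^P$-definable; as in the discussion preceding Fact \ref{L:Pantelis} one has $B_x=\dcl(A,b_x)$ for a finite canonical tuple $b_x$, and since the family $(F_x)_{x\in X}$ is $\LL^P_A$-definable the assignment $x\mapsto b_x$ is $\LL^P_A$-definable with values of uniformly bounded length. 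The task is then reduced to showing that $b_x$ can be chosen inside $E$, after which we may set $c_x:=b_x$.

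The hardest step will be exactly this last one, which morally amounts to elimination of imaginaries for the structure induced on $E$. I would proceed by induction on $\dim(F_x)$. When $\dim(F_x)=0$ the fibre $F_x$ is a finite subset of $E^m$, so its $\LL$-canonical parameter (which exists since $T$ eliminates imaginaries) lies in $\dcl^\LL(F_x)\subseteq E$ because $E\preceq M$, and it also lies in $\dcl^P(A,x)$ because $F_x$ is $\LL^P$-definable over that set; this parameter codes $F_x$. For $\dim(F_x)=d>0$, using Theorem \ref{T:opendef} I would replace the datum $F_x$ by the pair consisting of the $\LL$-definable set $\cl_{M^m}(F_x)$ and the boundary trace $(\cl_{M^m}(F_x)\setminus F_x)\cap E^m$, which has dimension $<d$ by Proposition $\ref{P:dim_tame}(\ref{P:dim_bound})$ and is therefore coded in $E$ by the inductive hypothesis; the residual point, and the genuinely delicate one, is to code $\cl_{M^m}(F_x)$ over $E$, which I would attempt by combining elimination of imaginaries of $T$ with the fact that $\cl_{M^m}(F_x)$ is an $\LL$-definable set of dimension $d=\dim(F_x)$ containing an $\LL_E$-definable subset of the same dimension $d$ by Lemma \ref{L:dim_subsets} (and Lemma \ref{L:proj_dim_coordinates}). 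Finally, the locus where $h$ is locally constant contributes, as in the dimension count in the proof of Lemma \ref{L:lemacurvasima}, only a finite image, which is $\LL_A$-definable and can be enumerated by Skolem functions of $T$ and sent injectively into $\{0,1,\dots\}\subseteq E$. I expect that bridging the inductive step — transferring a field of definition of the $\LL$-closure from $M$ into $E$ — is where the main work lies, and it is essentially a packaging of Corollary \ref{C:intersection} (weak elimination of imaginaries on $E$) together with elimination of finite imaginaries for $E$, itself a consequence of elimination of imaginaries of $T$ since $\LL$-canonical parameters of finite subsets of $E^m$ lie in $E$.
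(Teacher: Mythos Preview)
Your overall strategy --- reduce to coding the fibres $F_x = h^{-1}(x) \cap E^m$ (equivalently, the $\sim$-classes) by tuples in $E$ over $A$, then apply compactness --- is exactly the paper's. The paper likewise fixes $h$, considers $e_1 \sim e_2 \iff h(e_1)=h(e_2)$ on $E^m$, and shows each $\sim$-class has a canonical parameter in $E$ working over $A$.

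The divergence, and the gap, is in how the code in $E$ is produced. You propose an induction on $\dim(F_x)$, decomposing $F_x$ into its closure and its boundary trace; the boundary is handled by the inductive hypothesis, and you correctly identify the hard step as transferring a code for the $\LL$-definable set $\cl_{M^m}(F_x)$ into $E$. But your proposed tool --- elimination of imaginaries of $T$ together with the existence of a full-dimensional $\LL_E$-definable subset via Lemma~\ref{L:dim_subsets} --- does not do the job. Elimination of imaginaries in $T$ gives an $\LL$-canonical parameter $\gamma$ of $\cl_{M^m}(F_x)$ lying in $\dcl(A,e)\subseteq M$, and the existence of a large $\LL_E$-definable subset says nothing about where $\gamma$ sits: a closed $\LL$-set can contain many full-dimensional $\LL_E$-definable pieces while its canonical parameter remains outside $E$ (and outside anything interdefinable over $A$ with a tuple of $E$, a priori). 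Corollary~\ref{C:intersection} concerns subsets of $E^n$, so it does not apply to $\cl_{M^m}(F_x)$ directly, and passing to $\cl_{M^m}(F_x)\cap E^m$ returns you to a set of the same dimension as $F_x$, so the induction stalls.

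The paper bypasses the dimension induction on $F_x$ entirely. It picks a tuple $d\in E$ of \emph{minimal length} such that the $\sim$-class is $\LL^P$-definable over $\dcl(A,d)$ (such $d$ exists, e.g.\ $d=e$), and shows $\dcl(A,d)$ is the field of definition: if not, intersect with a smaller witnessing parameter set $B$; the standing hypothesis that intersections of definably closed special sets are special makes $A'=\dcl(A,d)\cap B$ special, Corollary~\ref{C:intersection} then gives definability over $A'$, and specialness of $A'$ together with $A'\subseteq\dcl(A,E)$ forces $A'=\dcl(A,f)$ for some $f\in E$ with $|f|<|d|$, contradicting minimality. This is precisely the missing ingredient in your outline: one needs specialness of $\dcl^P(A,\gamma)$ to conclude that an element of $\dcl(A,E)$ is interdefinable over $A$ with a tuple from $E$, and your sketch never invokes it. Your aside about the locally-constant locus of $h$ is not needed here.
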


\begin{proof}
	By Fact $\ref{F:Dries}(\ref{F:dcl_special})$, we may assume that $A$
	is $\LL^P$-definably closed.  By smallness of $X$ over $A$, there is
	an $\LL_A$-definable function $h:M^m\to M^n$, with
	$X\subseteq h(E^m)$. For tuples of $E^m$, set
	$$ e_1\sim e_2 \text{ if } h(e_1)=h(e_2).$$

	Let us first show that, for each tuple $e$ in $E^m$, its $\sim$-class
	has a field of definition in $T^P$, working over the parameter set
	$A$.  Clearly, the $\sim$-class $Y$ of $e$ is $\LL^P$-definable over
	$\dcl(Ae)$, since the latter is special and $\LL^P$-definably closed,
	by the Remark \ref{R:special} and Fact
	$\ref{F:Dries}(\ref{F:dcl_special})$.  Choose a tuple $d$ in $E$ of
	minimal length such that the set $Y$ is definable over the special set
	$\dcl(Ad)$. Suppose that $\dcl(Ad)$ is not a field of definition of
	$Y$. Hence, there is an $\LL^P$-definably closed set $B\supseteq A$
	(hence special) over which $Y$ is defined, but
	$$ A\subseteq A'=\dcl(Ad)\cap B\subsetneq \dcl(Ad).$$

 Our assumptions yield that the set $A'$ is special, since both $B$ and 
 $\dcl(Ad)$ are. By
	Corollary \ref{C:intersection}, the set $Y$ is definable over $A'$.
	Choose an $A$-independent tuple $u$ in $M$ with $A'=\dcl(Au)$. Though
	$u$ need not be in $E$, it lies in $\dcl(Ad)\subseteq \dcl(AE)$. The
	independence

	$$A,u \ind_{E\cap \dcl(Au)} E,$$

	\noindent implies that $\dim(u/A, E\cap \dcl(Au))=0$. Choose a tuple
	$f$ of $E$ of length $|u|$ which is $\LL$-interdefinable with $u$ over $A$.
	By the exchange principle, we have that $|f|<|d|$,
	contradicting our choice of $d$. Thus, the set $Y$ has $\dcl(Ad)$ as a
	field of definition (working over $A$).

 Since $T$ has finite Skolem functions, the set $Y$ has a canonical parameter in $E$, working over $A$. By compactness,
	there exists an $\LL^P_A$-definable injection which sends the
	$\sim$-class of $e$ to some tuple in $E^l$, for some fixed $l$. Every
	point of $X$ is of the form $h(e)$, for some $e$ in $E^m$. Any two
	representatives lie in the same $\sim$-class, so composing we obtain
	an $\LL^P$-definable injection $X\hookrightarrow E^l$ over $A$, as
	desired.

\end{proof}

Fact \ref{L:Pantelis} yields that every $\LL^P$-definable small
group of $(M,E)$ is, up to $\LL^P$-definability, a subset of some 
cartesian product $E^k$,  so
we conclude the following:

\begin{cor}\label{C:small_gps}
If the tame topological geometric theory $T$ with NIP eliminates 
imaginaries, is stably controlled by a theory $T^0$ and expands
a topological field,  then every small group $(G,\ast)$ has
locally, up to $\LL^P$-interdefinability, an $\LL^0$-definable 
group law with parameters
from $E$.
\end{cor}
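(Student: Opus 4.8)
The plan is to combine the two previously established results—the group configuration argument of Theorem~\ref{T:main} and the elimination of imaginaries for the predicate (Fact~\ref{L:Pantelis})—to reduce an arbitrary small group to one whose underlying set sits inside a cartesian power of $E$. First I would take a small $\LL^P$-definable group $(G,\ast)$ over a special set $A$; by Fact~$\ref{F:Dries}(\ref{F:dcl_special})$ we may assume $A$ is $\LL^P$-definably closed, and since $G$ is small this set $A$ can be taken finite-dimensional. Applying Fact~\ref{L:Pantelis} to the underlying set $G$ (which is small over $A$), we obtain an $\LL^P$-definable injection $\iota\colon G\hookrightarrow E^l$ over $A$ for some $l$. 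Transporting the group law $\ast$ along $\iota$ produces an $\LL^P$-definable group $(G',\ast')$ with $G'=\iota(G)\subseteq E^l$, together with an $\LL^P$-definable isomorphism $\iota\colon (G,\ast)\to(G',\ast')$ over $A$. This isomorphism is, in particular, an $\LL^P$-interdefinability between the two groups.

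Next I would apply Theorem~\ref{T:main} to the group $(G',\ast')$, which now satisfies the hypothesis $G'\subseteq E^l$: since $T$ is a geometric NIP theory of topological rings which is stably controlled by $T^0$ and expands a topological field (so in particular is tame for pairs by Lemma~\ref{Lem:StablyTame}, and the $\LL^0$ structure is available), Theorem~\ref{T:main} yields an $\LL_E$-definable subset $Z'\subseteq G'$ of dimension $\dim(G')$, an $\LL^0$-definable connected group $(H,\circ)$ over $E$, relatively open $\LL_E$-definable subsets $U',V',W'$ of $Z'$ with $U'\ast' V'\subseteq W'$ and corresponding $\LL_E$-definable subsets of $H(E)$, together with $\LL_E$-definable homeomorphisms $f,g,h$ satisfying $f(x)\circ g(y)=h(x\ast' y)$ for all $(x,y)\in U'\times V'$. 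This already says the group law $\ast'$ on $G'$ is locally $\LL^0$-definable with parameters from $E$.

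Finally I would pull this local structure back to $G$ through $\iota$. Set $U=\iota^{-1}(U')$, $V=\iota^{-1}(V')$, $W=\iota^{-1}(W')$, and $f_0=f\circ\iota$, $g_0=g\circ\iota$, $h_0=h\circ\iota$; then $f_0,g_0,h_0$ are $\LL^P$-definable (over $A$ together with the parameters of $f,g,h,\iota$, hence over a special set), and $f_0(x)\circ g_0(y)=h_0(x\ast y)$ for $(x,y)\in U\times V$, so the group law on $G$ agrees locally, up to $\LL^P$-interdefinability, with the $\LL^0$-definable law $\circ$ on $H$. Thus every small group has locally an $\LL^0$-definable group law with parameters from $E$, which is the assertion of the corollary.

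The main obstacle I expect is bookkeeping rather than genuine difficulty: the phrase ``up to $\LL^P$-interdefinability'' must be handled carefully, because the isomorphism $\iota$ from Fact~\ref{L:Pantelis} is only guaranteed to be $\LL^P$-definable (not $\LL^0$-definable), so the final statement genuinely produces an $\LL^0$-definable group law only after passing through an $\LL^P$-definable change of coordinates—and one must be sure that the local homeomorphisms from Theorem~\ref{T:main}, composed with $\iota$, remain $\LL^P$-definable over a special set, which follows since $G'$ is obtained from $G$ over the $\LL^P$-definably closed special set $A$ and $Z',U',V',W'$ are defined over $E$ (and $\dcl(A\cup E)$-type sets are again special by Remark~\ref{R:special}). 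One should also check that transporting $\ast$ along $\iota$ yields a genuinely $\LL^P$-definable group operation on $G'\subseteq E^l$, which is immediate since $\iota$ and $\ast$ are $\LL^P$-definable.
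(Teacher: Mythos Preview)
Your proposal is correct and follows exactly the paper's own argument: the paper simply observes that Fact~\ref{L:Pantelis} embeds any small group $\LL^P$-definably into some $E^l$, and then invokes Theorem~\ref{T:main}. Your additional bookkeeping about transporting the law along $\iota$ and pulling back the local data is accurate and spells out what the paper leaves implicit.
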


\begin{question}
Is the group law of every small group in a dense pair of real closed fields  semialgebraic,
up to $\LL^P$-interdefinability?
\end{question}

\section{Appendix:  reproving Fact \ref{F:Dries}}\label{A:Dries}

In \cite[Corollary 3.4]{vD98}, it was stated that $\LL^P$-definable 
function  $F:M\rightarrow M$  agrees off some small subset of 
$M$ with a function
$\widehat{F}:M\rightarrow M$ that is $\LL$-definable. However, 
the proof only yields that $F$ agrees off some small subset of 
$M$ with one of finitely many $\LL$-definable functions 
$\widehat{F}_1,\ldots,\widehat{F}_\ell:M\rightarrow M$. Note that 
the same gap also affects \cite[Theorem 4.9]{BDO11}. 

After private communication with van den Dries, we will provide in 
this appendix a proof of \cite[Corollary 3.4]{vD98} in the broader 
context of geometric topological structures, fixing the gap in the 
published version. 

\medskip
{\bf In this appendix, we work in a sufficiently saturated dense 
pair of models $(M,E)$ of a geometric theory $T$ of topological 
rings}.
\medskip

As in \cite[Theorem 2.5]{vD98}, a straightforward adaptation of 
the back-and-forth system
yields items (\ref{F:complete}), (\ref{F:special}) and 
(\ref{F:induced}) in Fact \ref{F:Dries}. 
	
\begin{lemma}\label{L:familyopen} Let $D$ be a special subset 
of $M$ and $(U_y:y\in Y)$ a uniformly $\LL_D$-definable 
family of open subsets of $M$, parametrized over the 
$\LL_D$-definable subset $Y\subseteq M^k$. The set
		$$\bigcup_{y\in Y\cap E^k}U_y$$
		is $\LL_D$-definable. 
	\end{lemma}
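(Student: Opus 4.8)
The statement is essentially an instance of the unary structure theorem for dense pairs, once one observes that the set in question is open. Write $V:=\bigcup_{y\in Y\cap E^k}U_y$. Since the family $(U_y)_{y\in Y}$ is uniformly $\LL_D$-definable and $Y$ is $\LL_D$-definable, the set $V$ is $\LL^P_D$-definable, being defined by an existential quantifier ranging over the $E$-points of $Y$; and, as a union of open subsets of $M$, it is open. The plan is then to apply Fact $\ref{F:Dries}(\ref{F:open_1dim})$ to $X=V$ over the special set $D$ and to check that the dense--codense piece is empty, so that $V$ is $\LL_D$-definable.

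In detail, Fact $\ref{F:Dries}(\ref{F:open_1dim})$ yields pairwise disjoint $\LL_D$-definable open sets $O_1,O_2,O_3$ of $M$ with $S:=M\setminus(O_1\cup O_2\cup O_3)$ finite, $O_1\subseteq V$, $V\cap O_2=\emptyset$, and $V\cap O_3$ both dense and codense in $O_3$. The one point that needs an argument is that $O_3=\emptyset$: since $V$ is open, the set $V\cap O_3$ is open in $O_3$, hence its complement $O_3\setminus(V\cap O_3)$ is closed in $O_3$; codensity makes this closed set dense in $O_3$, so it equals $O_3$, whence $V\cap O_3=\emptyset$; but then the density of $V\cap O_3$ in $O_3$ forces $O_3=\emptyset$. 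Consequently $M=O_1\sqcup O_2\sqcup S$ with $S$ finite and $\LL_D$-definable, and from $O_1\subseteq V\subseteq M\setminus O_2=O_1\cup S$ we obtain $V=O_1\cup(V\cap S)$.

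It remains to see that $V\cap S$ is $\LL_D$-definable. As $S$ is a finite $\LL_D$-definable set and $T$ has Skolem functions, every element of $S$ lies in $\acl(D)=\dcl(D)$ and is therefore the unique realisation of an $\LL_D$-formula; hence the finite set $V\cap S\subseteq\dcl(D)$ is $\LL_D$-definable. Therefore $V=O_1\cup(V\cap S)$ is $\LL_D$-definable, as desired. The only genuine input here is Fact $\ref{F:Dries}(\ref{F:open_1dim})$ together with the observation that $V$ is open, which is the content already recorded in the discussion right after Fact $\ref{F:Dries}$; the remaining work --- showing the finite ``error set'' $S$ does no harm --- is routine bookkeeping that relies only on the presence of Skolem functions in a geometric theory.
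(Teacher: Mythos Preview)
Your argument is internally correct, but it is circular in the context of the paper. Lemma~\ref{L:familyopen} appears in the Appendix, whose explicit purpose is to \emph{reprove} Fact~\ref{F:Dries}, repairing the gap in \cite[Corollary~3.4]{vD98}. The logical order there is
\[
\text{Lemma~\ref{L:familyopen}} \ \Longrightarrow\ \text{Proposition~\ref{P:descrip1ario}} \ \Longrightarrow\ \text{Proposition~\ref{P:onedimensional}},
\]
and Proposition~\ref{P:onedimensional} \emph{is} Fact~\ref{F:Dries}(\ref{F:open_1dim}) (indeed a refinement of it). In van den Dries's original paper, the analogue of Proposition~\ref{P:descrip1ario} (his Corollary~3.5) was deduced from the flawed Corollary~3.4; the whole point of Lemma~\ref{L:familyopen} is to furnish an independent route to Proposition~\ref{P:descrip1ario}, and thence to Fact~\ref{F:Dries}(\ref{F:open_1dim}). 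So invoking Fact~\ref{F:Dries}(\ref{F:open_1dim}) to prove Lemma~\ref{L:familyopen} assumes exactly what the lemma is meant to help establish.

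The paper's own proof avoids this by a direct geometric argument: it inducts on $k$, reduces to the case where $Y$ is open, and then shows that the set $B=\{x\in U:\inte(Y_x)=\emptyset\}$ of points whose fibre $Y_x=\{y\in Y: x\in U_y\}$ has empty interior is finite, using continuity and the density condition in Definition~\ref{D:geomstr}. Once $B$ is finite, density of $E$ in $M$ gives $\bigcup_{y\in Y\cap E^k}(U_y\setminus B)=\bigcup_{y\in Y}(U_y\setminus B)$, which is $\LL_D$-definable. Your observation that an open $\LL^P_D$-definable unary set is $\LL_D$-definable is of course true (and noted right after Fact~\ref{F:Dries}), but at this stage of the Appendix it is not yet available.
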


	\begin{proof} Naming the parameters in $D$, we may clearly 
	assume that $D=\emptyset$. By cell decomposition, we may 
	assume that $Y$ is a cell. We will prove it by induction on 
	$k$, 	the initial case $k=0$ being trivial. 
		
		\begin{claim} We may assume that $\dim(Y)=k$.
				\end{claim}
			\begin{claimproof}
Otherwise,  there is some projection $\pi: M^k\rightarrow 
M^\ell$, with $\ell<k$, such that $\pi\restr Y$ is a 
homeomorphism 
between $Y$ and the open subset $\pi(Y)$ of 
$M^\ell$. By Fact  \ref{F:Dries} (\ref{F:induced}), the 
$\LL^P$-definable subset $\pi(Y\cap E^k)\subset E^\ell$ is the 
trace of an $\LL$-definable set $Y_1$, that is, we have that 
$Y_1\cap E^\ell=\pi(Y\cap E^k)$. Considering the intersection, 
we may assume that $Y_1\subseteq \pi(Y)$, so denote by 
$\rho:Y_1\rightarrow Y$ 
 the inverse of $\pi\restr Y$ restricted to $Y_1$. 
Reparametrizing the family $(U_y:y\in Y)$ as $(U_{\rho(y)} :y\in 
Y_1)$ and decomposing $Y_1$ into cells, we conclude the desired 
result by induction. 
			\end{claimproof}
		
	Hence, the cell $Y$ is open in $M^k$. For $x$ in the open 
	$\LL$-definable 
		subset 
		$$U=\bigcup_{y\in Y} U_y$$ of $M$, consider the 
		corresponding 	$\LL$-definable sets 
		$$Y_{x}=\{y\in Y\,|\, x\in U_y\},$$
		and  $$B=\{x\in U\,|\,  
		\text{int}(Y_{x})=\emptyset \}.$$
		
	\begin{claim}
The set $B$ is finite. 
	\end{claim}	
\begin{claimproof}
	If $B$ were infinite, its interior $\text{int}(B)$  is non-empty. 
Set \[ Y_1=\bigcup\limits_{x\in \text{int}(B)} Y_x= 
\{y \in Y\,| U_y\cap 
\text{int}(B)\neq \emptyset\}.\]

 For each $y$ in $Y_1$, the open set 
$U_y\cap \text{ int}(B)$ is not empty. By Skolem functions in the 
theory $T$, choose some $x=f(y)$ in 
$U_y\cap \text{int}(B)$, for some $\LL$-definable map $f:Y_1\to M$. In 
particular, there is a basic open neighborhood of $f(y)$ 
contained in $U_y\cap \text{int}(B)$, that is, 
\[ f(y) \in \theta(M,g(y))\subseteq  U_y\cap \text{int}(B),\]
for some $\LL$-definable Skolem function $g:Y_1\to M^s$. We 
may assume, partitioning $Y_1$, that both $f$ and $g$ are 
continuous. 

If $Y_1$ were not empty, choose some element $y$ in $Y_1$ and 
consider $g(y)$ in $M^s$. By the density condition 
\ref{D:geomstr},  there is an open neighborhood $V$ of $g(y)$ and a tuple $\bar 
c$  such that for all $\bar d$ in $V$, 
\[ f(y) \in \theta(M,\bar c) \subseteq \theta(M,\bar d).\]

Let us first show that the open neighborhood $g\inv(V)$ of $y$ 
is contained in $Y_{f(y)}$: given $z$ in 
$g\inv(V)$, we need to show that $f(y)$ belongs to $U_z$. Now, 
the element $g(z)$ belongs to $V$, so 

\[ f(y) \in \theta(M,\bar c) \subseteq \theta(M,g(z)).\]
\noindent By construction, the set $\theta(M,g(z)) 
\subseteq U_z\cap \text{int}(B)$, so $f(y)$ lies in $U_z$ as desired. Hence 
$Y_{f(y)}$ has non-empty interior, which contradicts that $f(y)$ belongs to 
$B$. In particular, the set $Y_1$ 
is empty and thus so is $\text{int}(B)$, since 
$Y_x\neq\emptyset$, for every $x$ in $U$.  Therefore, 
the set $B$ is finite, as desired. 
	\end{claimproof}
	
	Now, the finite subset
\[B'=B\cap \big(\bigcup_{y\in Y\cap 
	E^k}U_y\big)\] is trivially $\LL$-definable. Note 
	that  
\[ \bigcup_{y\in Y\cap E^k}U_y=B'\cup \bigcup\limits_{y\in 
	Y\cap E^k}(U_y\setminus B).\] We need only show that  
	\[\bigcup\limits_{y\in 
	Y\cap E^k}(U_y\setminus B) = \bigcup\limits_{y\in 
	Y} (U_y\setminus B),\] in order to conclude that $ \bigcup_{y\in 
	Y\cap E^k} U_y$ is $\LL$-definable. 

Given $x$ in $(U_y\setminus B)$ for 
some $y$ in $Y$, we have that $\text{int}(Y_{x})\neq 
\emptyset$, since $x$ does not lie in $B$. Hence, there is an 
open neighbourhood $V$ of $y$ contained in $Y_x$, so choose 
an element $y_0$ in $V\cap E^k$, by density. Thus $x$ lies in 
$U_{y_0}\subseteq \bigcup_{y\in Y\cap E^k}  U_y$, as desired. 
\end{proof}

\begin{fact}\label{F:quantifier}(\cite[Theorem 2.5]{vD98}) Every 
$\LL^P$-formula $\phi(x_1,\ldots,x_n)$ is equivalent in $T^P$ to 
a boolean combination of formulae of the form 
$$\exists y_1 \cdots \exists y_m \big(P(y_1)\wedge \cdots 
\wedge P(y_m) \wedge  \psi(x_1,\ldots,x_n, 
y_1,\ldots,y_m)\big)$$
where for some $\LL$-formula $\psi(x_1,\ldots,x_n, 
y_1,\ldots,y_m)$. 
\end{fact}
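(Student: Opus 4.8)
The plan is to run a back-and-forth argument in the spirit of \cite[Theorem 2.5]{vD98}, reducing matters to Fact~\ref{F:Dries}(\ref{F:special}). Write $\Delta$ for the set of $\LL^P$-formulas of the displayed shape $\exists y_1\cdots\exists y_m\,(P(y_1)\wedge\cdots\wedge P(y_m)\wedge\psi(\bar x,\bar y))$, with $\psi$ an $\LL$-formula. By the standard criterion for relative quantifier elimination, it suffices to show: whenever $(M,E)$ and $(M',E')$ are sufficiently saturated models of $T^P$ and $\bar a\in M^n$, $\bar a'\in (M')^n$ satisfy the same formulas in $\Delta$, then $\tp^{\LL^P}(\bar a)=\tp^{\LL^P}(\bar a')$.

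So fix such $\bar a$ and $\bar a'$. First I would enlarge $\bar a$ to a \emph{special} tuple without affecting $\tp^{\LL^P}(\bar a)$. Since the geometric dimension has finite character, the non-increasing map $B\mapsto\dim(\bar a/B)$ on finite subsets $B\subseteq E$ attains its minimum $k=\dim(\bar a/E)$ at some finite $\bar e\subseteq E$, so that $\bar a\ind_{\bar e}E$. An application of the exchange principle then gives $E\cap\dcl(\bar a\bar e)=\dcl(\bar e)$, and a short dimension computation shows that $\dcl(\bar a\bar e)$ is special. In particular the quantifier-free $\LL^P$-type of $\bar a\bar e$ is already determined by the complete $\LL$-type $q:=\tp^{\LL}(\bar a\bar e)$: the terms $t(\bar a,\bar e)$ lying in $E$ are precisely those with ``$t(\bar x,\bar y)=s(\bar y)$''$\,\in q$ for some $\LL$-term $s$.

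Next I would transfer $q$ across. Taking $m=1$ and $\psi$ not involving $\bar y$, the set $\Delta$ contains every $\LL$-formula up to equivalence, so already $\tp^{\LL}(\bar a)=\tp^{\LL}(\bar a')$. Now each finite $\{\psi_1,\dots,\psi_k\}\subseteq q$ produces the $\Delta$-formula $\exists\bar y\,(P(\bar y)\wedge\bigwedge_i\psi_i(\bar x,\bar y))$, true of $\bar a$ and hence of $\bar a'$, so the partial type $\{P(y_i)\}_i\cup q(\bar a',\bar y)$ is finitely satisfiable in $M'$; by saturation it is realized by some $\bar e'\in (E')^m$, and $\tp^{\LL}(\bar a'\bar e')=q$ as $q$ is complete. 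It remains to check that $\dcl(\bar a'\bar e')$ is again special, i.e.\ that $\bar a'\ind_{\bar e'}E'$. As $\bar e'\in E'$ and $\dim(\bar a'/\bar e')=k$ is read off from $q$, I only need to exclude $\bar a'\in X$ for some $\LL$-definable $X\subseteq (M')^n$ over a tuple from $E'$ with $\dim X<k$. But such an $X$ is cut out by an $\LL$-formula $\chi(\bar x,\bar z)$, and ``$\dim_{\bar x}\chi(\bar x,\bar z)<k$'' is an $\LL$-definable condition $\delta(\bar z)$ by elimination of $\exists^\infty$ and definability of the dimension; hence $\exists\bar z\,(P(\bar z)\wedge\delta(\bar z)\wedge\chi(\bar x,\bar z))$ is a $\Delta$-formula true of $\bar a'$, so true of $\bar a$ --- contradicting $\dim(\bar a/E)=k$. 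Thus $\dcl(\bar a'\bar e')$ is special with $E'$-part $\dcl(\bar e')$, so $\bar a\bar e$ and $\bar a'\bar e'$ share the same quantifier-free $\LL^P$-type; Fact~\ref{F:Dries}(\ref{F:special}) yields $\tp^{\LL^P}(\bar a\bar e)=\tp^{\LL^P}(\bar a'\bar e')$, and restricting to the first $n$ coordinates finishes the argument.

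The step I expect to require the most care is the transfer of specialty: the point is to recognise that ``$\bar a'$ is trapped in a low-dimensional set defined over the predicate'' is itself expressible by a member of $\Delta$, which is exactly where definability of the geometric dimension enters. Everything else is bookkeeping with the dimension calculus and the reduction to Fact~\ref{F:Dries}(\ref{F:special}).
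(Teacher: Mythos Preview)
Your argument is correct. The paper itself does not prove this statement: it records it as a Fact with a reference to \cite[Theorem 2.5]{vD98}, having already remarked (just before Fact~\ref{F:quantifier}) that a straightforward adaptation of van den Dries's back-and-forth system yields Fact~\ref{F:Dries}(\ref{F:complete})--(\ref{F:induced}). Your proof is precisely that adaptation, packaged as a reduction to Fact~\ref{F:Dries}(\ref{F:special}): you extend $\bar a$ by a finite $\bar e\subseteq E$ to make $\dcl(\bar a\bar e)$ special, transfer the $\LL$-type of $\bar a\bar e$ to the other model via $\Delta$-formulas, and then use definability of dimension to express ``$\bar a'$ falls into a set of dimension $<k$ over the predicate'' as a $\Delta$-formula, forcing $\dcl(\bar a'\bar e')$ to be special as well. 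The identification $E\cap\dcl(\bar a\bar e)=\dcl(\bar e)$ is justified by the exchange computation you sketch together with $\acl=\dcl$, and the Skolem-function hypothesis guarantees that definable closure is term-generated, so the quantifier-free $\LL^P$-type really is read off from $q$. All the ingredients (definability of dimension, Skolem functions, $\acl=\dcl$) are part of Definition~\ref{D:geomstr}, so nothing is missing. In short, the paper defers the proof to \cite{vD98}; you have supplied it, along the same lines the paper indicates.
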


We will now describe $\LL^P$-definable subsets of $M$, which is 
exactly \cite[Corollary 3.5]{vD98}.  We will obtain such a 
description using Fact \ref{F:quantifier} (instead of deducing it 
from \cite[Cor 3.4]{vD98}).

\begin{prop}\label{P:descrip1ario} For every $\LL^P$-definable 
subset $S\subseteq M$ over an special set $D$, there are 
$\LL_D^P$-definable small sets  $X_1$ and $X_2$, and an  
$\LL_D$-definable subset $S'\subseteq M$ such that
$$S=(S'\setminus X_1) \cup X_2.$$
\end{prop}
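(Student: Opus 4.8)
The idea is to reduce the description of an arbitrary $\LL^P_D$-definable unary set $S$ to a bounded family of $\LL$-definable sets using the quantifier-simplification of Fact~\ref{F:quantifier}, and then squeeze that finite ambiguity into small error sets. First I would use Fact~\ref{F:quantifier} to write $S$ as a finite boolean combination of sets of the form $S_\psi=\{x\in M : \exists \bar y\in P^m\ \psi(x,\bar y)\}$, where $\psi$ is an $\LL$-formula over $D$; since the class of sets of the form $S'\setminus X_1 \cup X_2$ with $S'$ $\LL_D$-definable and $X_1,X_2$ small is closed under finite boolean combinations (complements: $(S'\setminus X_1)\cup X_2$ has complement $((M\setminus S')\setminus X_2)\cup X_1$; finite intersections and unions: distribute and absorb the small sets, using that finite unions of small sets are small), it suffices to treat a single such $S_\psi$.

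\textbf{Main step.} For $S_\psi$, consider the uniformly $\LL_D$-definable family $(Z_{\bar y} : \bar y\in M^m)$ where $Z_{\bar y}=\{x : \psi(x,\bar y)\}$, so that $S_\psi=\bigcup_{\bar y\in E^m} Z_{\bar y}$. By cell decomposition and Fact~\ref{F:Dries}(\ref{F:open_1dim}) (applied uniformly in $\bar y$, then invoking compactness to get uniform bounds), each $Z_{\bar y}\subseteq M$ splits, up to a uniformly bounded finite set, into an open part $U_{\bar y}$ and its complement inside another open set; more precisely, partition $M^{m+1}$ into finitely many $\LL_D$-definable cells on which $\psi$ has controlled behaviour, so that we may write $Z_{\bar y} = O_{\bar y}\,\triangle\,F_{\bar y}$ with $O_{\bar y}$ open and uniformly $\LL_D$-definable and $|F_{\bar y}|\leq N$ for a fixed $N$. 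Then $\bigcup_{\bar y\in E^m} O_{\bar y}$ is exactly of the form handled by Lemma~\ref{L:familyopen}, hence $\LL_D$-definable; call it $S'$. The discrepancy $S_\psi\,\triangle\, S' \subseteq \bigcup_{\bar y\in E^m} F_{\bar y}$ (together with points where $O_{\bar y}$ contributes but $\bar y$-slices disagree, which is again absorbed in the finite fibres), and the latter union is small: the map $\bar y\mapsto F_{\bar y}$ has fibres of size $\leq N$, so by finite Skolem functions in $T$ there are $\LL_D$-definable $f_1,\dots,f_N: M^m\to M$ with $F_{\bar y}=\{f_1(\bar y),\dots,f_N(\bar y)\}$, whence $\bigcup_{\bar y\in E^m} F_{\bar y}\subseteq \bigcup_{i=1}^N f_i(E^m)$ is small. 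Setting $X_1 = S'\setminus S_\psi$ and $X_2 = S_\psi\setminus S'$, both contained in this small set, gives $S_\psi = (S'\setminus X_1)\cup X_2$, as required.

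\textbf{Expected obstacle.} The delicate point is obtaining the decomposition $Z_{\bar y}=O_{\bar y}\,\triangle\,F_{\bar y}$ \emph{uniformly} in the parameter $\bar y$ with a single bound $N$ on $|F_{\bar y}|$, and ensuring that $O_{\bar y}$ is genuinely a uniformly $\LL_D$-definable family of \emph{open} sets so that Lemma~\ref{L:familyopen} applies. This requires a careful uniform version of cell decomposition: one partitions $M^{m+1}$ into $\LL_D$-definable cells $C_1,\dots,C_r$ such that for each fixed $\bar y$ and each $j$, the slice $(C_j)_{\bar y}\subseteq M$ is either finite with bounded cardinality or open, the bound coming from elimination of $\exists^\infty$ together with compactness. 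Once the family of open pieces is isolated, Lemma~\ref{L:familyopen} does the essential work, and the rest is the bookkeeping with small sets and boolean combinations sketched above.
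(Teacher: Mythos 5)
Your proof is correct and follows essentially the same route as the paper's: reduce via Fact \ref{F:quantifier} to existential formulae over $P$, observe the target class is closed under boolean combinations, decompose each fibre uniformly into an open part plus a uniformly bounded finite part, then apply Lemma \ref{L:familyopen} to the union of the open parts and finite Skolem functions to see that the union of the finite parts is small. The only cosmetic difference is that the paper writes each fibre as a disjoint union $U_{\bar y}\cup F_{\bar y}$, so that $X_1=\emptyset$ and $S=S'\cup X_2$, whereas your symmetric-difference bookkeeping also produces a (harmless) $X_1$.
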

\begin{proof} Let $\mathcal{F}$ denote the family of subsets of 
$M$ of the form 
$$(S'\setminus X_1)\cup X_2$$
with $S'$, $X_1$ and $X_2$ as in the statement.  Since every 
subset 
of a 
small subset is again small,  it is easy to check that $\mathcal{F}$ 
is closed under boolean combinations. 
Thus, by Fact \ref{F:quantifier}, we need only show that each unary 
definable set $S$ of the form 
$$\exists y_1 \cdots \exists y_m \big(P(y_1)\wedge \cdots 
\wedge 
P(y_m) \wedge  \psi(x,d,y_1,\ldots, y_m)\big)$$
belongs to $\mathcal{F}$, where $d$ is a tuple of parameters in 
$D$. Set $Y\subseteq M^m$ be the set $\exists x 
\psi(x,d,y_1,\ldots, y_m)$. By uniform cell decomposition, for 
each $y$ in $Y$, the $\LL_D$-definable subset of $\psi(M,d,y)$ of 
$M$ is a union of a finite set $F_y$ (whose cardinality is bounded 
by some $r$ in $\N$ uniformly on $y$) and an
open $\LL_D$-definable  subset $U_y$ of $M$. Since $T$ 
 has finite Skolem functions, there are 
$\LL_D$-definable functions $g_1,\ldots,g_r:Y \rightarrow M$ 
such that $F_y=\{g_1(y),\ldots,g_k(y)\}$ for each $y$ in $Y$. 

 The  $\LL_P$-definable set \[X_2=\bigcup\limits_{y\in Y\cap 
 E^m} F_y=\bigcup_{i=1}^r g_i(E^k),\] is small. By Lemma 
 \ref{L:familyopen}, the set \[S'= \bigcup\limits_{y\in Y\cap E^m}  
 U_y\] is $\LL_D$-definable. Clearly, the set $S=S'\cup X_2$, as 
 desired.
\end{proof}
\begin{remark} In particular, if a small subset $X\subseteq M$ is 
$\LL^P$-definable over some set of parameters $D$, then there is 
an $\LL_D$-definable map $h:M^m\rightarrow M$ such that 
$X\subseteq h(E^m)$.
\end{remark}

The proof of \cite[Theorem 4]{vD98}, and more generally of 
\cite[Theorem 4.9]{BDO11}, only need as main ingredient the  
Proposition 
\ref{P:descrip1ario}, and not  \cite[Corollary 3.4]{vD98}. Hence, we 
obtain verbatim a proof of the following result: 

\begin{prop}\label{P:onedimensional} 
(cf. \cite[Theorem 4]{vD98}) For every $\LL^P$-definable unary set
  $X\subseteq M$ over a special set $D$, there are pairwise 
  disjoint $\LL_D$-definable open
  sets $U$, $V$, $W_1$ and $W_2$ of $M$  such that:
  \begin{itemize}
  \item $M\setminus (U\cup V\cup W_1 \cup W_2)$ is finite.
  \item $U\subseteq X$ and $X\cap V=\emptyset$.
  \item $X\cap W_1$ is dense in $W_1$ and small.
  \item $X\cap W_2$ is dense and codense in $W_2$ and co-small in $W_2$.
  \end{itemize} 
\end{prop}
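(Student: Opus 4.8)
The plan is to derive the statement directly from Proposition \ref{P:descrip1ario} together with the one-dimensional decomposition of \emph{small} $\LL^P$-definable sets supplied by Fact $\ref{F:Dries}(\ref{F:open_1dim})$, following the proof of \cite[Theorem 4]{vD98} essentially verbatim. The only input of that proof is a description of $\LL^P$-definable unary sets as ``$\LL$-definable, up to small correction'', which Proposition \ref{P:descrip1ario} now provides unconditionally; so no appeal to the flawed \cite[Corollary 3.4]{vD98} is required.

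First I would invoke Proposition \ref{P:descrip1ario} to write $X=(S'\setminus X_1)\cup X_2$, with $S'$ an $\LL_D$-definable subset of $M$ and $X_1,X_2$ small $\LL_D^P$-definable sets. Then I would normalise this decomposition: replacing $X_1$ by $X_1\cap S'$ leaves $X$ unchanged, and the elementary set identity $(S'\setminus X_1)\cup X_2=(S'\setminus(X_1\setminus X_2))\cup(X_2\setminus S')$ then lets one replace $X_1$ by $X_1\setminus X_2$ and $X_2$ by $X_2\setminus S'$, so that in the end $X_1\subseteq S'$ and $X_2\cap S'=\emptyset$. In particular the union becomes disjoint, with $X\cap S'=S'\setminus X_1$ and $X\setminus S'=X_2$. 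This normalisation is the only delicate point: it is exactly what ensures that on the open set $\inte_M(S')$ the small set $X_2$ plays no role, and off $\cl_M(S')$ the small set $X_1$ plays no role, so that on each relevant open piece the behaviour of $X$ is governed by a single small set and can be read off directly from Fact $\ref{F:Dries}(\ref{F:open_1dim})$.

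Next I would put $S^\circ=\inte_M(S')$ and $V^\ast=M\setminus\cl_M(S')$, both $\LL_D$-definable and open. Their common complement $\cl_M(S')\setminus S^\circ$ is a definable unary set with empty interior — a nonempty open subset of it would have $S'$ dense in it and yet of empty interior, hence finite, which is absurd — so it is finite. Applying Fact $\ref{F:Dries}(\ref{F:open_1dim})$ to each small set $X_i$, and using that the piece contained in $X_i$ is empty because $X_i$ is small (see the remark following that statement), produces for $i=1,2$ $\LL_D$-definable open sets $V^i$ and $W^i$ with $M\setminus(V^i\cup W^i)$ finite, $X_i\cap V^i=\emptyset$, and $X_i\cap W^i$ dense and codense in $W^i$ (and small, being contained in $X_i$). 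I would then set
$$U=S^\circ\cap V^1,\qquad V=V^\ast\cap V^2,\qquad W_1=V^\ast\cap W^2,\qquad W_2=S^\circ\cap W^1,$$
which are $\LL_D$-definable, open, and pairwise disjoint (since $U,W_2\subseteq S^\circ$, $V,W_1\subseteq V^\ast$, $S^\circ\cap V^\ast=\emptyset$, and $V^i\cap W^i=\emptyset$), with cofinite union by the finiteness statements above.

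Finally it remains to verify the four listed properties, which is routine bookkeeping given the normalisation. Every point of $U$ lies in $S'\setminus X_1\subseteq X$; every point of $V$ avoids both $S'$ and $X_2$, hence $X$. As $W_1\subseteq V^\ast\subseteq M\setminus S'$ we get $X\cap W_1=X_2\cap W_1$, which is small and, since $W_1\subseteq W^2$, dense in $W_1$. As $W_2\subseteq S^\circ\subseteq S'$ and $X_2\cap S'=\emptyset$ we get $X\cap W_2=W_2\setminus X_1$; since $W_2\subseteq W^1$, the set $X_1\cap W_2$ is dense and codense in $W_2$, hence so is $X\cap W_2$, and its relative complement $W_2\setminus X=X_1\cap W_2\subseteq X_1$ is small, i.e. $X\cap W_2$ is co-small in $W_2$. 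I expect no further obstacle beyond this bookkeeping and the normalisation of the decomposition; indeed, once Proposition \ref{P:descrip1ario} replaces \cite[Corollary 3.4]{vD98}, the argument of \cite[Theorem 4]{vD98} carries over word for word, even keeping the distinction between the small piece $W_1$ and the co-small piece $W_2$.
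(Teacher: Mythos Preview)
Your strategy is exactly the paper's: the authors' entire proof is the remark that the argument of \cite[Theorem~4]{vD98} carries over verbatim once Proposition~\ref{P:descrip1ario} replaces \cite[Corollary~3.4]{vD98} as the input, and you spell that argument out in detail, with the same normalisation of $(S',X_1,X_2)$ and the same partition into $U,V,W_1,W_2$. The bookkeeping you give is correct.

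One citation needs care, however. In the Appendix's logical order, Fact~$\ref{F:Dries}(\ref{F:open_1dim})$ is not yet available when Proposition~\ref{P:onedimensional} is being proved: Proposition~\ref{P:onedimensional} \emph{is} the Appendix's justification of item~$(\ref{F:open_1dim})$, the whole point being that in \cite{vD98} this item was obtained through the chain Corollary~3.4 $\Rightarrow$ Corollary~3.5 $\Rightarrow$ Theorem~4, and the Appendix is rebuilding that chain with the first link removed. So invoking Fact~$\ref{F:Dries}(\ref{F:open_1dim})$ for the small sets $X_1,X_2$ is self-referential as written. In van den Dries's actual proof of \cite[Theorem~4]{vD98}, the decomposition of a \emph{small} $\LL^P$-definable unary set into pieces $V^i,W^i$ is obtained directly from the preliminary small-set analysis of \cite[\S4]{vD98} (no nonempty open set is small, together with the direct description of the closure of a set contained in some $h(E^m)$), and these preliminaries do not rely on Corollary~3.4. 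You should point to that argument rather than to Fact~$\ref{F:Dries}(\ref{F:open_1dim})$; with that adjustment your write-up matches the intended proof.
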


We have now all the ingredients in order to fill the gap of 
\cite[Corollary 3.4]{vD98}.

\begin{theorem}\label{T:Function_fixed} Any $\LL^P$-definable 
function  $F:M\rightarrow M$ definable over an special set $D$ 
agrees off some small subset of $M$ with an $\LL_D$-definable  
function $\widehat{F}:M\rightarrow M$. 
\end{theorem}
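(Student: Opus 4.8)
The plan is to derive the statement from Proposition \ref{P:onedimensional} by an induction on the number of candidate $\LL$-definable functions. First, by Fact $\ref{F:Dries}(\ref{F:dcl_special})$ we may replace $D$ by $\dcl^P(D)=\dcl(D)$, so we assume that $D$ is $\LL^P$-definably closed (hence still special), and, since $F$ is defined over a finite subtuple of $D$, that $D$ is finite-dimensional. Exactly as in \cite[\S3]{vD98} --- applying Proposition \ref{P:onedimensional} (which now plays the role of \cite[Theorem 4]{vD98}) together with uniform cell decomposition to the graph of $F$ --- one produces finitely many $\LL_D$-definable functions $g_1,\ldots,g_\ell\colon M\to M$ and a small $\LL_D^P$-definable set $X_0$ such that $F(x)\in\{g_1(x),\ldots,g_\ell(x)\}$ for every $x\in M\setminus X_0$. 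This is the part of van den Dries's argument that is correct; the gap is in passing from a finite list of candidates to a single function, which is what remains to be done.

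The core of the proof is the following claim, established by induction on $\ell$: \emph{if $Y\subseteq M$ is $\LL_D$-definable, $F\colon Y\to M$ is $\LL^P_D$-definable, and $F(x)\in\{g_1(x),\ldots,g_\ell(x)\}$ off a small subset of $Y$ for some $\LL_D$-definable functions $g_1,\ldots,g_\ell\colon Y\to M$, then $F$ agrees off a small subset of $Y$ with a single $\LL_D$-definable function.} For $\ell\le 1$ this is immediate. Assume $\ell>1$, and apply Proposition \ref{P:onedimensional} to the $\LL^P_D$-definable set $S=\{x\in Y: F(x)=g_1(x)\}$: we obtain pairwise disjoint $\LL_D$-definable open sets $U,V,W_1,W_2$ with $M\setminus(U\cup V\cup W_1\cup W_2)$ finite, $U\subseteq S$, $S\cap V=\emptyset$, with $S\cap W_1$ dense in $W_1$ and small, and with $W_2\setminus S$ small (since $S\cap W_2$ is co-small in $W_2$). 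Set $Y_{\mathrm{good}}=U\cup(W_2\cap Y)$ and $Y_{\mathrm{bad}}=Y\setminus Y_{\mathrm{good}}$, both $\LL_D$-definable. On $Y_{\mathrm{good}}$ the function $F$ agrees with $g_1$ off the small set $W_2\setminus S$. On $Y_{\mathrm{bad}}$ the function $F$ differs from $g_1$ off the small set $(S\cap W_1)\cup\bigl(Y\setminus(U\cup V\cup W_1\cup W_2)\bigr)$; hence, off a small subset of $Y_{\mathrm{bad}}$ (using also that $F(x)\in\{g_1(x),\ldots,g_\ell(x)\}$ off a small set), we have $F(x)\in\{g_2(x),\ldots,g_\ell(x)\}$, and the inductive hypothesis yields an $\LL_D$-definable $\widehat G\colon Y_{\mathrm{bad}}\to M$ agreeing with $F$ off a small set. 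The function equal to $g_1$ on $Y_{\mathrm{good}}$ and to $\widehat G$ on $Y_{\mathrm{bad}}$ is $\LL_D$-definable and agrees with $F$ off a small set, since a union of two small sets is small. Taking $Y=M$ and the $g_i$, $X_0$ from the first paragraph proves the theorem.

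The only genuine obstacle is precisely the gap in \cite[Corollary 3.4]{vD98}: there is no $\LL_D$-definable rule selecting which $g_i$ realises $F(x)$ at a given $x$. Proposition \ref{P:onedimensional} resolves this because it shows that the purely pair-theoretic set $\{F=g_1\}$ is, modulo a finite set and a small set, a disjoint union of $\LL_D$-definable open pieces on each of which either ``$F=g_1$ off a small set'' or ``$F\ne g_1$ off a small set'' holds; excising the open region on which $g_1$ wins and recursing on the remainder, now with one candidate fewer, terminates the induction. The only point requiring care is that the decomposition of $M$ supplied by Proposition \ref{P:onedimensional} must be intersected with the current domain $Y$, which is why the bookkeeping is carried out through the inclusions $U\subseteq S\subseteq Y$ and the traces $W_i\cap Y$; no further ideas are needed.
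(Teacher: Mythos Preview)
Your proof is correct and follows essentially the same strategy as the paper: both start from the finite list of $\LL_D$-definable candidates supplied by the original van den Dries argument, apply Proposition~\ref{P:onedimensional} to the set $\{x:F(x)=g_1(x)\}$, and use the resulting decomposition $U,V,W_1,W_2$ to peel off one candidate and induct on~$\ell$. The only cosmetic difference is that the paper keeps the domain equal to $M$ and merges $g_1$ into a modified $\widehat{G_2'}$ (equal to $g_1$ on $U\cup W_2$ and to $g_2$ elsewhere), whereas you split the domain into $Y_{\mathrm{good}}$ and $Y_{\mathrm{bad}}$ and recurse on the latter; these are equivalent bookkeeping choices for the same reduction.
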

\begin{proof} As stated in the beginning of  the Appendix, the 
proof of \cite[Corollary 3.4]{vD98}  yields that there are finitely 
many $\LL_D$-definable maps 
$\widehat{G}_1,\ldots,\widehat{G}_\ell:M\rightarrow M$ and a 
small $\LL^P$-definable set $X$ such that 
for all $x$ in $M\setminus X$, we have that
$F(x)=\widehat{G}_i(x)$ for some $1\leq i\leq\ell$. 

Clearly, if $\ell=1$, we are done. Otherwise, consider the set  
$X_1=\{x\in M \,|\, F(x)= \widehat{G}_1(x)\}$, and let $U$, $V$, 
$W_1$ 
and $W_2$ be the corresponding open $\LL_D$-definable subsets 
of $M$ for $X_1$, as in Proposition \ref{P:onedimensional}. Define 
$F_1=M\setminus (U\cup V\cup W_1 \cup W_2)$ and 
\[ \begin{array}[t]{rccc}
\widehat{G_2'}:&M &\to &M\\
& x& \mapsto &\begin{cases} \widehat{G}_1(x) \text{ if } x\in 
U\cup 
	W_2 \\ \widehat{G}_2(x)  \text{ otherwise.}
\end{cases}\\
\end{array}\]

\noindent The subset $X'=X \cup (X_1\cap 
W_1)\cup (W_2\setminus X_1) \cup F_1$ is clearly small, and 
note that for $x$ in 
$M\setminus X'$, the value $F(x)$ equals $\widehat{G_2'}(x)$
or $\widehat{G}_i(x)$, for some $3\leq i\leq\ell$. We proceed now 
by induction on $\ell$ in order to produce an
$\LL_D$-definable function
$\widehat{F}$, which agrees with $F$ off a small subset of $M$. 
\end{proof}

\end{document}